\newtheorem{theorem}{Theorem}[subsection]
\newtheorem{lemma}[theorem]{Lemma}
\newtheorem{corollary}[theorem]{Corollary}
\newtheorem{proposition}[theorem]{Proposition}
\theoremstyle{definition}
\newtheorem{definition}[theorem]{Definition}
\newtheorem{example}[theorem]{Example}
\newtheorem*{problem}{Problem}
\newtheorem{appexample}{Example}[section]
\renewcommand{\AA}{\mathbf{A}}
\newcommand{\BB}{\mathbf{B}}
\newcommand{\XX}{\mathbf{X}}
\newcommand{\TT}{\mathbf{T}}
\newcommand{\vv}{\mathbf{v}}
\newcommand{\kk}{\mathbf{k}}
\renewcommand{\tt}{\mathbf{t}}
\newcommand{\xx}{\mathbf{x}}
\newcommand{\yy}{\mathbf{y}}
\newcommand{\kkappa}{\boldsymbol{\kappa}}
\newcommand{\sts}{\mathrm{STS}}
\newcommand{\kts}{\mathrm{KTS}}
\newcommand{\nkts}{\mathrm{NKTS}}
\newcommand{\skss}{\mathrm{SKSS}}
\newcommand{\PA}{\mathrm{PA}}
\newcommand{\rs}{\mathrm{RS}}
\newcommand{\soma}{\mathrm{SOMA}}
\newcommand{\nequiv}{\not\equiv}
\newcommand{\tv}{\widetilde{v}}
\newcommand{\tx}{\widetilde{X}}
\newcommand{\blob}{\circle*{0.2}}
\begin{document}

\begin{frontmatter}

\title{Generalized packing designs}
\author[rfb]{Robert F. Bailey}
\ead{robert.bailey@uregina.ca}

\author[acb]{Andrea C. Burgess\corref{cor1}}
\ead{andrea.burgess@ryerson.ca}

\cortext[cor1]{Corresponding author}

\address[rfb]{Department of Mathematics and Statistics, University of Regina, 3737 Wascana Parkway, Regina, Saskatchewan S4S~0A2, Canada}
\address[acb]{Department of Mathematics, Ryerson University, 350 Victoria St., Toronto, Ontario, M5B~2K3, Canada}

%\footers{}{\thepage}{}

%\maketitle

\begin{abstract}
Generalized $t$-designs, which form a common generalization of objects such as $t$-designs, resolvable designs and orthogonal arrays, were defined by Cameron [P.J.~Cameron, A generalisation of $t$-designs, \emph{Discrete Math.}\ 309 (2009), 4835--4842].  In this paper, we define a related class of combinatorial designs which simultaneously generalize packing designs and packing arrays.  We describe the sometimes surprising connections which these generalized designs have with various known classes of combinatorial designs, including Howell designs, partial Latin squares and several classes of triple systems, and also concepts such as resolvability and block colouring of ordinary designs and packings, and orthogonal resolutions and colourings.  Moreover, we derive bounds on the size of a generalized packing design and construct optimal generalized packings in certain cases.  In particular, we provide methods for constructing maximum generalized packings with $t=2$ and block size $k=3$ or 4.
\end{abstract}

\begin{keyword} 
packing design \sep generalized packing design \sep packing array \sep partial Latin square \sep Kirkman triple system \sep Kirkman signal set \sep Howell design \sep Room square

\MSC[2010] 05B40 (primary) \sep 05B05 \sep 05B15 \sep 05C70 (secondary)
\end{keyword}

\end{frontmatter}

\section{Introduction}
In his 2009 paper~\cite{cameron}, Cameron introduced a new class of combinatorial designs, which simultaneously generalizes various well-known classes of designs, including $t$-designs, mutually orthogonal Latin squares, orthogonal arrays and 1-factorizations of complete graphs.  Further work on Cameron's ``generalized $t$-designs'' has been done by Soicher~\cite{soicher} and others~\cite{Drizen,Patterson}.  Related objects are also discussed in the earlier papers of Martin~\cite{Martin98,Martin99} and Teirlinck~\cite{Teirlinck89}.  In a remark near the end of his paper, Cameron suggests that a similar definition can be made for generalizing both packing and covering designs.

In a recent paper~\cite{gencov}, the authors, Cavers and Meagher considered the analogue of Cameron's generalization for covering designs.  In this paper, we pursue the ``dual'' notion of {\em generalized packing designs}.  The key difference when studying packing or covering problems rather than ``traditional'' designs is that the question is typically not whether the designs exist (this is usually trivial to answer), but obtaining bounds on the maximum (for packings) or minimum (for coverings) size, and constructing optimal (or near-optimal) designs.  However, the similarity between packing and covering only goes so far.  In this paper, we shall see how a number of families of known designs (including Howell designs, partial Latin squares and several classes of triple systems) arise as special cases of generalized packing designs.  We shall also see how concepts such as resolvability and block colouring of ordinary designs and packings, and orthogonal resolutions and colourings, appear in this setting.

Background material on most classes of designs can be found in the {\em Handbook of Combinatorial Designs}~\cite{handbook}.  Before introducing our generalized packing designs, we will review ordinary packing designs.

\subsection{Ordinary packing designs}

\begin{definition} \label{defn:packing}
Let $v,k,t,\lambda$ be positive integers with $v\geq k\geq t$.  A {\em $t$-$(v,k,\lambda)$ packing design}, or more succinctly a {\em packing}, is a family $\mathcal{D}$ of $k$-subsets (called {\em blocks}) of a $v$-set $X$, where any $t$-subset of $X$ is contained in at most $\lambda$ members of $\mathcal{D}$.
\end{definition}

\begin{example}
The following is an example of a $2$-$(6,3,1)$ packing:
$$
\begin{array}{l}
\{ 1,2,4 \} \\
\{ 2,3,5 \} \\
\{ 3,4,6 \} \\
\{ 1,5,6 \}.
\end{array}
$$
It is straightforward to check that no 2-subset of $\{1,\ldots,6\}$ appears in more than one block.  Also, this packing is of maximum possible size.
\end{example}

Note that in the case where each $t$-subset occurs in {\em exactly} $\lambda$ blocks, we have a $t$-$(v,k,\lambda)$ design.  However, as we have a weaker requirement, it is trivial to show that a $t$-$(v,k,\lambda)$ packing exists: a single $k$-subset satisfies the definition almost vacuously.  Instead, what is considered interesting is to determine the maximum possible size of a $t$-$(v,k,\lambda)$ packing, and give constructions of packings which meet that bound.  To that end, we make the following definition.

\begin{definition} \label{defn:packing_number}
Let $v,k,t,\lambda$ be positive integers with $v\geq k\geq t$.  The {\em packing number} $D_\lambda(v,k,t)$ is the maximum possible number of blocks in a $t$-$(v,k,\lambda)$ packing.
\end{definition}

In this paper, we are primarily interested in the case where $\lambda=1$, in which case we omit the subscript~$\lambda$.

There are a number of bounds known on $D_\lambda(v,k,t)$; the reader is referred to the survey by Mills and Mullin~\cite{mills_mullin} for details.  (An updated list of results on packing numbers can be found in~\cite[{\S}VI.40]{handbook}.)  The most general bound was found independently in the 1960s by both Johnson~\cite{johnson} and Sch\"onheim~\cite{schonheim_cover,schonheim} (in fact, Johnson was studying the equivalent problem of bounding the size of a constant-weight binary error-correcting code).  We shall refer to this bound as the {\em Johnson--Sch\"onheim bound}.

\begin{proposition} \label{johnson_schonheim}
Let $v,k,t,\lambda$ be positive integers with $v\geq k\geq t$.  Then the packing number satisfies
\[ D_\lambda(v,k,t) \leq U_\lambda(v,k,t) = \left\lfloor \frac{v}{k} \left\lfloor \frac{v-1}{k-1} \cdots \left\lfloor \frac{\lambda(v-t+1)}{k-t+1} \right\rfloor \cdots \right\rfloor \right\rfloor. \]
\end{proposition}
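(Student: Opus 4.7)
The plan is to prove the bound by induction on $t$, using a standard double-counting argument together with the notion of a derived packing at a point.

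For the base case $t=1$, I would count incidences. In a $1$-$(v,k,\lambda)$ packing $\mathcal{D}$, every point lies in at most $\lambda$ blocks, so counting pairs $(x,B)$ with $x \in B \in \mathcal{D}$ two ways gives $k|\mathcal{D}| \leq \lambda v$. Since $|\mathcal{D}|$ is an integer, $|\mathcal{D}| \leq \lfloor \lambda v/k \rfloor = U_\lambda(v,k,1)$.

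For the inductive step, assume the bound holds for $t-1$. Given a $t$-$(v,k,\lambda)$ packing $\mathcal{D}$ on $X$ and a point $x \in X$, define the \emph{derived family at $x$}
\[ \mathcal{D}_x = \{ B \setminus \{x\} : B \in \mathcal{D},\ x \in B \}. \]
I would first verify that $\mathcal{D}_x$ is a $(t-1)$-$(v-1,k-1,\lambda)$ packing on $X \setminus \{x\}$: if some $(t-1)$-subset $T \subseteq X \setminus \{x\}$ were contained in more than $\lambda$ members of $\mathcal{D}_x$, then $T \cup \{x\}$ would be a $t$-subset contained in more than $\lambda$ members of $\mathcal{D}$, a contradiction. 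By the induction hypothesis,
\[ |\mathcal{D}_x| \leq D_\lambda(v-1,k-1,t-1) \leq U_\lambda(v-1,k-1,t-1). \]

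Now I would double-count the incidences $(x,B)$ with $x \in B \in \mathcal{D}$. On one hand this count is $k |\mathcal{D}|$; on the other hand it is $\sum_{x \in X} |\mathcal{D}_x| \leq v \cdot U_\lambda(v-1,k-1,t-1)$. Therefore
\[ |\mathcal{D}| \leq \frac{v}{k} U_\lambda(v-1,k-1,t-1), \]
and since $|\mathcal{D}|$ is an integer, we may take the floor:
\[ |\mathcal{D}| \leq \left\lfloor \frac{v}{k} U_\lambda(v-1,k-1,t-1) \right\rfloor = U_\lambda(v,k,t), \]
where the last equality follows from unwinding the recursive definition of $U_\lambda$.

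I do not anticipate a serious obstacle: the only delicate points are verifying that the derived object at a point is itself a packing with parameters shifted by one (which drives the induction) and chasing the nested floor functions to confirm the identity $U_\lambda(v,k,t) = \lfloor (v/k)\, U_\lambda(v-1,k-1,t-1) \rfloor$. The base case $t=1$ is where the factor $\lambda$ first enters (as $\lambda v/(k)$ appears as the innermost quantity in the nested expression), so it is worth double-checking that the innermost floor in the definition of $U_\lambda(v,k,t)$ matches the base-case bound derived from direct counting.
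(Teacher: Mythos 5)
Your proof is correct and is the standard double-counting/induction argument for the Johnson--Sch\"onheim bound; the paper itself states this proposition without proof, simply citing Johnson and Sch\"onheim, and your argument is precisely the classical one from those sources. The base case, the verification that the derived family at a point is a $(t-1)$-$(v-1,k-1,\lambda)$ packing, and the recursion $U_\lambda(v,k,t)=\left\lfloor \tfrac{v}{k}\,U_\lambda(v-1,k-1,t-1)\right\rfloor$ all check out.
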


Various refinements of this bound are known.  For many small values of $k$, $t$ and $\lambda$, the structure of maximum $t$-$(v,k,\lambda)$ packings is completely described: the cases $t=2$, $\lambda=1$ and $k=3$ or~4 will be especially important in this paper, and these will be discussed later in Sections~\ref{section:blocksize3} and~\ref{section:blocksize4}, respectively.

\section{Generalized packings}

\subsection{Definitions and notation}
To define our generalized packing designs, we require various pieces of notation and terminology.  

If $\xx=(x_1,x_2,\ldots,x_m)$ and $\yy=(y_1,y_2,\ldots,y_m)$ are $m$-tuples of integers, we write $\xx\leq\yy$ to mean that $x_i\leq y_i$ for all $i\in\{1,2,\ldots,m\}$.  Similarly, if $\AA=(A_1,A_2,\ldots,A_m)$ and $\BB=(B_1,B_2,\ldots,B_m)$ are $m$-tuples of sets, we write $\AA\subseteq\BB$ to mean that $A_i\subseteq B_i$ for all $i\in\{1,2,\ldots,m\}$, and say {\em $\AA$ is contained in $\BB$}.

For any set $X$, we use the notation ${X \choose k}$ to denote the set of all $k$-subsets of $X$.  (Thus if $X$ is finite and has size $n$, then the size of ${X \choose k}$ is ${n \choose k}$.)  If we have an $m$-tuple of sets $\XX=(X_1,X_2,\ldots,X_m)$ and an $m$-tuple of integers $\kk=(k_1,k_2,\ldots,k_m)$, define
\[ {\XX \choose \kk} = {X_1 \choose k_1} \times {X_2 \choose k_2} \times \cdots \times {X_m \choose k_m}. \]
So a member of ${\XX \choose \kk}$ consists of an $m$-tuple of finite sets, of sizes $(k_1,k_2,\ldots,k_m)$.

Now suppose $v,k,t,\lambda$ are integers where $v \geq k \geq t \geq 1$ and $\lambda \geq 1$.  Let $\vv = (v_1,v_2,\ldots,v_m)$ be an $m$-tuple of positive integers with sum $v$, and let $\kk=(k_1,k_2,\ldots,k_m)$ be an $m$-tuple of positive integers with sum $k$, and where $\kk \leq \vv$.
Then let $\XX = (X_1,X_2,\ldots,X_m)$ be an $m$-tuple of pairwise disjoint sets, where $|X_i|=v_i$.  Let $\tt=(t_1,t_2,\ldots,t_m)$ be an $m$-tuple of {\em non-negative} integers.  We say $\tt$ is {\em $(\kk,t)$-admissible} if $\tt \leq \kk$ and $\sum t_i=t$.  In a similar manner, if $\TT=(T_1,T_2,\ldots,T_m)$ is an $m$-tuple of disjoint sets, we say that $\TT$ is {\em $(\vv,\kk,t)$-admissible} if each $T_i$ is a $t_i$-subset of $X_i$, where $(t_1,t_2,\ldots,t_m)$ is $(\kk,t)$-admissible.  (Note that since $t_i$ is allowed to be zero, the corresponding set $T_i$ is allowed to be empty.)

\begin{definition} \label{defn:gen_pack}
Suppose $\vv,\kk,t,\lambda,\XX$ are as above.  Then a {\em $t$-$(\vv,\kk,\lambda)$ generalized packing design}, or more succinctly a {\em generalized packing}, is a family $\mathcal{P}$ of elements of ${\XX \choose \kk}$, called {\em blocks}, with the property that every $\TT=(T_1,T_2,\ldots,T_m)$ which is $(\vv,\kk,t)$-admissible is contained in at most $\lambda$ blocks in $\mathcal{P}$.
\end{definition}

We call $X = X_1 \dot\cup X_2 \dot\cup \cdots \dot\cup X_m$ the {\em point set} of the generalized packing design; one can think of $\XX$ as being a partition of the point set $X$.  However, by an abuse of notation, we will often label the elements of each $X_i$ as $\{1,2,\ldots,v_i\}$.

We remark that our definition of a generalized packing is identical to Cameron's definition of a generalized $t$-design, except his definition requires ``exactly $\lambda$''.  It is also identical to that given in~\cite{gencov} for generalized covering designs, except that definition requires ``at least $\lambda$''.  Clearly, a generalized $t$-design is simultaneously a generalized packing and a generalized covering design.

As with ordinary packings, the existence of a $t$-$(\vv,\kk,\lambda)$ generalized packing is trivial to establish: a single block satisfies the definition.  So the interesting question is to bound the size of a generalized packing.  Again borrowing the notation from ordinary packings, we make the following definition.

\begin{definition} \label{defn:generalized packing_number}
Suppose $\vv,\kk,t,\lambda,\XX$ are as above.  The {\em generalized packing number} $D_\lambda(\vv,\kk,t)$ is the maximum possible number of blocks in a $t$-$(\vv,\kk,\lambda)$ generalized packing.
\end{definition}

Again, we are usually only interested in the case where $\lambda=1$, in which case we omit the subscript~$\lambda$.  Various bounds on $D(\vv,\kk,t)$ are given in Section~\ref{section:bounds}.  Before we do so, we shall consider some straightforward examples.

\subsection{Basic examples}

That we do indeed have a generalization of ordinary packings is shown by the next result.

\begin{proposition} \label{prop:ordinary}
Suppose $\vv=(v)$ and $\kk=(k)$.  Then a $t$-$(\vv,\kk,\lambda)$ generalized packing is equivalent to an ordinary $t$-$(v,k,\lambda)$ packing.
\end{proposition}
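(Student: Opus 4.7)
The plan is to unpack Definition~\ref{defn:gen_pack} in the case $m=1$ and check that each ingredient reduces to the corresponding ingredient of Definition~\ref{defn:packing}. With $\vv=(v)$ and $\kk=(k)$, the tuple $\XX$ consists of a single set $X_1$ with $|X_1|=v$, and the point set is simply $X=X_1$. The set of potential blocks is ${\XX \choose \kk}={X_1 \choose k}$, which I would identify with the collection of ordinary $k$-subsets of $X$ in the obvious way: each block, formally a $1$-tuple $(B_1)$, is recorded simply as the $k$-set $B_1$.

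Next I would check the admissibility conditions. Since $\tt\leq\kk$ and $\sum t_i=t$ together force $\tt=(t)$ (using the standing hypothesis $v\geq k\geq t$ to guarantee $t\leq k$), the only $(\kk,t)$-admissible tuple is $(t)$. Consequently a $(\vv,\kk,t)$-admissible $\TT$ is a $1$-tuple $(T_1)$ where $T_1$ is a $t$-subset of $X$, and containment $\TT\subseteq\BB$ of such a $\TT$ in a block $\BB=(B_1)$ reduces to $T_1\subseteq B_1$. Under the bijection $(B_1)\leftrightarrow B_1$ of the previous paragraph, the condition that every $(\vv,\kk,t)$-admissible $\TT$ lies in at most $\lambda$ blocks becomes exactly the condition that every $t$-subset of $X$ lies in at most $\lambda$ blocks of $\mathcal{P}$. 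This is precisely Definition~\ref{defn:packing}, and the correspondence is plainly invertible, so I would conclude that generalized packings and ordinary packings coincide in this case.

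There is no real obstacle here beyond keeping the tuple notation straight; the proposition is essentially a sanity check confirming that Definition~\ref{defn:gen_pack} specializes correctly when $m=1$. The only small point worth flagging explicitly in the write-up is the uniqueness of the admissible $\tt$, since this is what collapses the family of admissibility constraints to the single condition on $t$-subsets.
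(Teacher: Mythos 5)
Your proof is correct: it is the straightforward unpacking of Definition~\ref{defn:gen_pack} in the case $m=1$, with the key observation that $\tt=(t)$ is the unique $(\kk,t)$-admissible vector, which is exactly why the paper states Proposition~\ref{prop:ordinary} without proof. Nothing is missing, and your argument coincides with the (omitted) reasoning the paper relies on.
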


However, numerous other objects arise as generalized packings, as we spend much of this paper demonstrating.  An easy example is the following.

\begin{proposition} \label{prop:edgecolouring}
Suppose $\vv=(v_1,v_2)$, $\kk=(2,1)$, $t=2$ and $\lambda=1$.  Then a $2$-$(\vv,\kk,1)$ generalized packing is equivalent to a proper edge colouring of a simple graph on $v_1$ vertices, using at most $v_2$ colours.
\end{proposition}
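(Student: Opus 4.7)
The plan is to set up a direct bijection between blocks of the generalized packing and coloured edges: identify $X_1$ with the vertex set of the graph and $X_2$ with a palette of $v_2$ colours, and map a block $(\{a,b\},\{c\}) \in \binom{X_1}{2}\times\binom{X_2}{1}$ to the edge $\{a,b\}$ coloured $c$. Under this correspondence, the family of blocks becomes a set of edges of a (multi)graph on $v_1$ vertices together with an assignment of a colour from $\{1,\ldots,v_2\}$ to each. What remains is to verify that the $t$-$(\vv,\kk,1)$ packing axiom is equivalent, under this dictionary, to the graph being simple and the colouring being proper.

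To do this, I would enumerate the $(\kk,t)$-admissible tuples. Since $\kk=(2,1)$ and $t=2$, the possibilities $\tt\le\kk$ with $t_1+t_2=2$ are exactly $\tt=(2,0)$ and $\tt=(1,1)$ (note $\tt=(0,2)$ is excluded by $t_2\le k_2=1$). For each admissible $\tt$, I would translate the condition that every $(\vv,\kk,t)$-admissible $\TT$ lies in at most one block into a statement about the coloured graph.

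For $\tt=(2,0)$ the admissible $\TT$ are pairs $(\{a,b\},\emptyset)$ with $\{a,b\}\in\binom{X_1}{2}$, and the packing condition says no two distinct blocks share the same first coordinate $\{a,b\}$; equivalently, no two parallel edges occur (even with distinct colours), so the underlying graph is simple. For $\tt=(1,1)$ the admissible $\TT$ are pairs $(\{a\},\{c\})$ with $a\in X_1$, $c\in X_2$, and the packing condition says at most one block has $a$ in its first coordinate and $c$ as its second; equivalently, at most one edge incident to $a$ carries colour $c$, which is exactly properness of the edge colouring at every vertex.

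Conversely, given any proper edge colouring of a simple graph on $v_1$ vertices with palette of size at most $v_2$, taking the collection of blocks $(\{a,b\},\{c\})$ for each edge $\{a,b\}$ of colour $c$ gives a family in $\binom{\XX}{\kk}$ which satisfies both admissibility conditions by the same translation, so the correspondence is a bijection between the two classes of objects. I expect no real obstacle: the only subtlety is being careful that colours unused by the edge colouring are harmless (the definition only requires \emph{at most} $v_2$ colours, matching $k_2=1$ drawn from a set of size $v_2$), and that the $\tt=(2,0)$ case, though perhaps easy to overlook, is precisely what rules out multi-edges.
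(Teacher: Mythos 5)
Your proposal is correct and follows essentially the same route as the paper: identify blocks $(\{a,b\},\{c\})$ with coloured edges, observe that the only admissible vectors are $\tt=(2,0)$ and $\tt=(1,1)$, and translate these into simplicity of the graph and properness of the colouring, respectively. Your write-up is just a more detailed version of the paper's argument, so there is nothing further to add.
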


\begin{proof}
Suppose we have such a graph.  An edge $\{x,y\}$ with colour $\alpha$ corresponds to a block $(\{x,y\},\, \{\alpha\})$.  The two admissible vectors $\tt$ are $\tt=(2,0)$ and $\tt=(1,1)$.  That no $\TT$ corresponding to $\tt=(2,0)$ is repeated is because the graph is simple; that no $\TT$ corresponding to $\tt=(1,1)$ is repeated is saying that no colour can appear more than once at a vertex, i.e.\ the colouring of the edges is proper.

On the other hand, given such a generalized packing, we can always construct an edge-coloured graph from it.
\end{proof}

\begin{example} \label{example:edgecolouring}
Suppose $\vv=(5,4)$ and $\kk=(2,1)$.  Figure~\ref{figure:edgecolouring} shows $2$-$(\vv,\kk,1)$ packing equivalent to the 4-edge-colouring of the given graph on 5 vertices.
\begin{figure}[htb]
\centering
\begin{minipage}[c]{50mm}
\[ \begin{array}{l}
( \{1,2\},\, \{a\} ) \\
( \{1,4\},\, \{b\} ) \\
( \{1,5\},\, \{c\} ) \\
( \{2,3\},\, \{b\} ) \\
( \{2,4\},\, \{c\} ) \\
( \{3,4\},\, \{d\} ) \\
( \{4,5\},\, \{a\} ) \\
\end{array} \]
\end{minipage}
\hspace{10mm}
\begin{minipage}[c]{50mm}
\setlength{\unitlength}{0.8cm}
\begin{picture}(4,5.5)
\thicklines

\put(0.5,0.5){\blob}
\put(3.5,0.5){\blob}
\put(0.5,3.5){\blob}
\put(3.5,3.5){\blob}
\put(2.0,5.0){\blob}

\put(0.5,0.5){\line(1,0){3}}
\put(0.5,0.5){\line(0,1){3}}
\put(0.5,0.5){\line(1,1){3}}
\put(3.5,0.5){\line(0,1){3}}
\put(3.5,3.5){\line(-1,0){3}}
\put(0.5,3.5){\line(1,1){1.5}}
\put(3.5,3.5){\line(-1,1){1.5}}

\put(0.0,3.3){1}
\put(0.0,0.3){2}
\put(3.75,0.3){3}
\put(3.75,3.3){4}
\put(1.85,5.25){5}

\put(0.1,1.9){$a$}
\put(1.85,0.0){$b$}
\put(3.65,1.9){$d$}
\put(1.85,3.0){$b$}
\put(1.85,2.2){$c$}
\put(0.9,4.25){$c$}
\put(2.9,4.25){$a$}

\end{picture}
\end{minipage}
\caption{A $2$-$(\vv,\kk,1)$ generalized packing, for $\vv=(5,4)$ and $\kk=(2,1)$.}
\label{figure:edgecolouring}
\end{figure}
\end{example}

\section{General results}
Throughout the remainder of the paper, unless otherwise specified, we let $\vv=(v_1, v_2, \ldots, v_m)$ and $\kk= (k_1, k_2, \ldots, k_m)$ and assume that $\vv \geq \kk$.  %Also, unless otherwise specified, we assume that $\lambda=1$.

\subsection{A few bounds} \label{section:bounds}

As mentioned above, one of our goals is to determine the maximum size of a given generalized packing.  In this subsection, we obtain a number of upper bounds on the generalized packing number $D_\lambda(\vv,\kk,t)$, particularly when $\lambda=1$.  Many of the results are analogous to lower bounds on the sizes of generalized covering designs given in~\cite{gencov}.  In many cases, the proofs are sufficiently similar to those in~\cite{gencov} that we refer the reader there for full details.

We begin by giving a bound based on the ordinary packing number, which is similar to~\cite[Corollary 3.10]{gencov}.  

\begin{proposition} \label{indiv_part}
Suppose $\vv=(v_1,v_2,\ldots,v_m)$ and $\kk=(k_1,k_2,\ldots,k_m)$ where $\vv\geq\kk$.
Then $\displaystyle D_{\lambda}(\vv,\kk,t) \leq \min_{k_i \geq t} D_{\lambda}(v_i, k_i, t)$.
\end{proposition}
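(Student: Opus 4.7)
The plan is to reduce to the ordinary packing case by projecting a generalized packing onto a single coordinate. Fix some index $i$ with $k_i \geq t$, and let $\mathcal{P}$ be a maximum $t$-$(\vv,\kk,\lambda)$ generalized packing on $\XX$, so that $|\mathcal{P}| = D_\lambda(\vv,\kk,t)$. I would form the multiset
\[ \mathcal{P}_i = \{ B_i : (B_1, B_2, \ldots, B_m) \in \mathcal{P} \}, \]
which is a family of $k_i$-subsets of the $v_i$-set $X_i$, and whose cardinality as a multiset equals $|\mathcal{P}|$.

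The key step is to check that $\mathcal{P}_i$ is an ordinary $t$-$(v_i,k_i,\lambda)$ packing. Given any $t$-subset $T \subseteq X_i$, consider the $m$-tuple $\TT$ with $T_i = T$ and $T_j = \emptyset$ for $j \neq i$; this is $(\vv,\kk,t)$-admissible precisely because $k_i \geq t$ (the corresponding $\tt$ has $t_i = t$ and $t_j = 0$ otherwise). A block $(B_1, \ldots, B_m) \in \mathcal{P}$ contains $\TT$ if and only if $T \subseteq B_i$, so the number of members of $\mathcal{P}_i$ containing $T$ (with multiplicity) coincides with the number of blocks of $\mathcal{P}$ containing $\TT$, which is at most $\lambda$ by the generalized packing property. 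Hence $\mathcal{P}_i$ satisfies Definition~\ref{defn:packing}.

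It follows that $|\mathcal{P}| = |\mathcal{P}_i| \leq D_\lambda(v_i, k_i, t)$ for every $i$ with $k_i \geq t$, and minimizing over such $i$ gives the desired bound. I do not anticipate a genuine obstacle here; the only point worth guarding against is to treat $\mathcal{P}_i$ as a \emph{multiset} (as permitted by the word ``family'' in Definition~\ref{defn:packing}), so that coincidences among the $i$-th coordinates of distinct blocks of $\mathcal{P}$ do not reduce the count.
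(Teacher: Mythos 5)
Your proposal is correct and follows essentially the same route as the paper's proof: projecting onto the $i$-th coordinate and using the admissible vector with $t$ in position $i$ and $0$ elsewhere. The only addition is your explicit (and sensible) remark about treating the projection as a multiset, a point the paper leaves implicit.
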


\begin{proof}
The vector $\tt$ which has $t$ in position $i$ and $0$ elsewhere is admissible whenever $k_i \geq t$.  Now, the entries of the $i^{\textnormal{th}}$ component of each block form a $t$-$(v_i,k_i,\lambda)$ ordinary packing, and the result follows.
\end{proof}

If the bound given in Proposition~\ref{indiv_part} is met with equality, with $D_{\lambda}(\vv,\kk,t) = D_{\lambda}(v_i, k_i, t)$, then increasing the size of any part other than the $i^{\textnormal{th}}$ does not change the packing number.  We formalize this idea, which will prove crucial in determining the packing number in many cases, as follows.

\begin{proposition} \label{increase_v}
Suppose $\vv=(v_1, v_2, \ldots, v_m)$ and $\kk = (k_1, k_2, \ldots, k_m)$ and that there exists an $i \in \{1,2,\ldots,m\}$ such that $D_\lambda(\vv,\kk,t) = D_\lambda(v_i, k_i, t)$.  For $j \neq i$, suppose that $v_j' \geq v_j$.  Let $\vv' = (v_1', v_2', \ldots, v_{i-1}', v_i, v_{i+1}', v_{i+2}', \ldots, v_{m}')$.  Then $D_\lambda(\vv',\kk,t) = D_\lambda(v_i, k_i, t)$.
\end{proposition}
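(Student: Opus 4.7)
The plan is a two-sided bounding argument, showing $D_\lambda(\vv',\kk,t)$ is both at most and at least $D_\lambda(v_i,k_i,t)$.

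For the upper bound, I would apply Proposition~\ref{indiv_part} directly to the enlarged vector $\vv'$. Since the $i^{\textnormal{th}}$ component of $\vv'$ equals $v_i$ (it is exactly the one coordinate we did not grow) and $k_i$ is unchanged, taking the minimum in Proposition~\ref{indiv_part} over the single index $i$ yields
\[ D_\lambda(\vv',\kk,t) \leq D_\lambda(v_i,k_i,t). \]

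For the lower bound, I would argue by inclusion: any generalized packing on the smaller ground sets is still a generalized packing on the larger ground sets. Concretely, start with a maximum $t$-$(\vv,\kk,\lambda)$ generalized packing $\mathcal{P}$, which by hypothesis has $D_\lambda(v_i,k_i,t)$ blocks. Since each $X_j$ embeds as a subset of some $v_j'$-set $X_j'$ (and $X_i = X_i$), the same family $\mathcal{P}$ is a family of elements of ${\XX' \choose \kk}$. Any $(\vv',\kk,t)$-admissible tuple $\TT'=(T_1',\ldots,T_m')$ whose parts all lie within the old $X_j$'s is contained in at most $\lambda$ blocks of $\mathcal{P}$ by the original packing property; any $\TT'$ with some $T_j' \not\subseteq X_j$ is contained in no block of $\mathcal{P}$ at all, since blocks of $\mathcal{P}$ draw points only from the $X_j$'s. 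Hence $\mathcal{P}$ is a $t$-$(\vv',\kk,\lambda)$ generalized packing, giving $D_\lambda(\vv',\kk,t) \geq |\mathcal{P}| = D_\lambda(v_i,k_i,t)$.

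Combining these two inequalities yields $D_\lambda(\vv',\kk,t) = D_\lambda(v_i,k_i,t)$, as required. There is no real obstacle here: the only thing worth checking carefully is the verification in the lower bound that enlarging the ground sets does not create any new admissible $\TT'$ that is over-covered by $\mathcal{P}$, and this is immediate since points outside the original $X_j$'s appear in no block of $\mathcal{P}$.
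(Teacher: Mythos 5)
Your proposal is correct and follows essentially the same route as the paper: the lower bound comes from viewing a maximum $t$-$(\vv,\kk,\lambda)$ packing as a packing on the enlarged ground sets with the extra points unused, and the upper bound comes from applying Proposition~\ref{indiv_part} to $\vv'$ at the index $i$. Your write-up is in fact somewhat more careful than the paper's (which leaves implicit the check that no newly admissible $\TT'$ can be over-covered), but the argument is the same.
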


\begin{proof}
Let $\mathcal{P}$ be a maximum $t$-$(\vv,\kk,\lambda)$ packing.  Then the blocks of $\mathcal{P}$ form a $t$-$(\vv',\kk',\lambda)$ packing, where $v_j'-v_j$ points in $X_j$ are unused (whenever $j \neq i$).  By Proposition~\ref{indiv_part}, $D_\lambda(\vv',\kk',t)$ cannot exceed the size of $\mathcal{P}$.
\end{proof}

%WRITE UP MORE GENERAL VERSION OF SECOND BOUND BELOW.
By considering an admissible vector $\tt$ with $t$ entries equal to~1 and all other entries~0, we obtain our next bound, which is somewhat reminiscent of the Johnson--Sch\"onheim bound (Proposition~\ref{johnson_schonheim}).  It is analogous to~\cite[Proposition 5.1]{gencov} for generalized covering designs.

\begin{lemma} \label{induction_bound}
Let $\vv=(v_1, v_2, \ldots, v_m)$, $\kk = (k_1, k_2, \ldots, k_m)$ and $\XX = (X_1, X_2, \ldots, X_m)$ be defined as above, and suppose that $t \leq m$.  Let $\{i_1, \ldots, i_t\}$ be a $t$-subset of $\{1, \ldots, m\}$, and let $\mathscr{B}$ be a collection of blocks with the property that each $t$-tuple of the form $(x_{i_1}, x_{i_2}, \ldots, x_{i_t})$, where $x_{i_j} \in X_{i_j}$, appears in at most one block.  Then 
$$
|\mathscr{B}| \leq \left\lfloor \frac{v_{i_1}}{k_{i_1}} \left \lfloor \frac{v_{i_2}}{k_{i_2}} \cdots \left\lfloor \frac{v_{i_t}}{k_{i_t}} \right\rfloor \cdots \right\rfloor \right\rfloor.
$$
\end{lemma}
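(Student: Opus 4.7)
The plan is to prove this by induction on $t$, mimicking the standard derivation of the Johnson--Sch\"onheim bound (Proposition~\ref{johnson_schonheim}). The bound has a nested-floor structure, so the induction will peel off one factor at a time from the \emph{outermost} index $i_1$ inward.

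For the base case $t=1$, the hypothesis says each point $x \in X_{i_1}$ lies in at most one block of $\mathscr{B}$. Since each block of $\mathscr{B}$ contains exactly $k_{i_1}$ points of $X_{i_1}$, double counting point--block incidences in $X_{i_1}$ gives $k_{i_1}|\mathscr{B}| \leq v_{i_1}$, and integrality of $|\mathscr{B}|$ yields $|\mathscr{B}| \leq \lfloor v_{i_1}/k_{i_1} \rfloor$.

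For the inductive step, assume the claim for $t-1$. Fix any point $x \in X_{i_1}$ and consider the sub-collection $\mathscr{B}_x \subseteq \mathscr{B}$ of blocks containing $x$ in their $i_1$-th component. I would then verify that $\mathscr{B}_x$, viewed via its projection onto components $i_2,\ldots,i_t$, satisfies the inductive hypothesis with the $(t-1)$-subset $\{i_2,\ldots,i_t\}$: if two blocks of $\mathscr{B}_x$ shared a common $(t-1)$-tuple $(x_{i_2},\ldots,x_{i_t})$, then the $t$-tuple $(x,x_{i_2},\ldots,x_{i_t})$ would appear in two blocks of $\mathscr{B}$, contradicting the hypothesis on $\mathscr{B}$. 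So the induction gives
\[ |\mathscr{B}_x| \;\leq\; N_{t-1} \;:=\; \left\lfloor \frac{v_{i_2}}{k_{i_2}} \left\lfloor \frac{v_{i_3}}{k_{i_3}} \cdots \left\lfloor \frac{v_{i_t}}{k_{i_t}} \right\rfloor \cdots \right\rfloor \right\rfloor. \]
Double-counting pairs $(x,B)$ with $x \in X_{i_1}$ and $B \in \mathscr{B}$ containing $x$ then yields $k_{i_1}|\mathscr{B}| = \sum_{x \in X_{i_1}} |\mathscr{B}_x| \leq v_{i_1} N_{t-1}$, so $|\mathscr{B}| \leq (v_{i_1}/k_{i_1}) N_{t-1}$. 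Since $|\mathscr{B}|$ is an integer, we may apply the outer floor to obtain the desired bound.

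I do not anticipate any serious obstacle here: the argument is a standard divide-and-count, and the only mildly delicate point is correctly identifying which index is ``peeled off'' so that the nested floor structure emerges in the claimed order. One must also keep in mind that although the lemma statement does not mention $\lambda$, the argument generalizes unchanged to a bound of the form $\lambda v_{i_1}$ if each $t$-tuple appears in at most $\lambda$ blocks; here, with $\lambda=1$ in the hypothesis, the clean single-floor recursion goes through.
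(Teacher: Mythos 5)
Your proof is correct: the induction on $t$, peeling off the index $i_1$ by double-counting point--block incidences in $X_{i_1}$ and applying the inductive hypothesis to each $\mathscr{B}_x$, is exactly the standard argument behind this nested-floor bound, and it is the same approach as the covering-design analogue (\cite[Proposition 5.1]{gencov}) to which the paper defers for details. The floor manipulation at the end is also handled correctly, since $N_{t-1}$ and $|\mathscr{B}|$ are integers.
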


By considering all possible such admissible vectors $\tt$, we have the following corollary (analogous to~\cite[Corollary 5.2]{gencov} for generalized covering designs).

\begin{corollary} \label{schonheim_type}
Let $\vv=(v_1, v_2, \ldots, v_m)$ and $\kk = (k_1, k_2, \ldots, k_m)$ and suppose that $t \leq m$.  Let $\mathscr{I}$ denote the collection of all $t$-subsets of $\{1, 2, \ldots, m\}$.  Then
$$
D(\vv,\kk,t) \leq \min_{\{i_1, \ldots, i_t\} \in \mathscr{I}} \left\lfloor \frac{v_{i_1}}{k_{i_1}} \left \lfloor \frac{v_{i_2}}{k_{i_2}} \cdots \left\lfloor \frac{v_{i_t}}{k_{i_t}} \right\rfloor \cdots \right\rfloor \right\rfloor.
$$
\end{corollary}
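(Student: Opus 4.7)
The plan is to invoke Lemma~\ref{induction_bound} once for each $t$-subset of $\{1,2,\ldots,m\}$ and then take the minimum over all such choices. Fix a $t$-subset $\{i_1,i_2,\ldots,i_t\}\subseteq\{1,\ldots,m\}$ and consider the $m$-tuple $\tt$ whose $i_j$-th entry equals $1$ for each $j=1,\ldots,t$ and whose remaining entries equal $0$. Since each $k_i\geq 1$, we have $\tt\leq\kk$, and $\sum_i t_i=t$, so $\tt$ is $(\kk,t)$-admissible.

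Let $\mathcal{P}$ be any $t$-$(\vv,\kk,1)$ generalized packing. By Definition~\ref{defn:gen_pack}, every $(\vv,\kk,t)$-admissible tuple $\TT$ whose shape is prescribed by this $\tt$ is contained in at most one block of $\mathcal{P}$. Such a $\TT$ is nothing other than a choice of a single element $x_{i_j}\in X_{i_j}$ in each of the positions indexed by $\{i_1,\ldots,i_t\}$, with $T_i=\emptyset$ elsewhere; and a block of $\mathcal{P}$ contains $\TT$ precisely when its $i_j$-th component contains $x_{i_j}$ for every $j$. Thus $\mathcal{P}$ has the property that each $t$-tuple $(x_{i_1},x_{i_2},\ldots,x_{i_t})\in X_{i_1}\times\cdots\times X_{i_t}$ appears in at most one block, which is exactly the hypothesis of Lemma~\ref{induction_bound}. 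Applying that lemma with $\mathscr{B}=\mathcal{P}$ gives
$$|\mathcal{P}|\leq \left\lfloor \frac{v_{i_1}}{k_{i_1}} \left\lfloor \frac{v_{i_2}}{k_{i_2}} \cdots \left\lfloor \frac{v_{i_t}}{k_{i_t}} \right\rfloor \cdots \right\rfloor \right\rfloor.$$

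Since this inequality holds for an arbitrary $t$-subset $\{i_1,\ldots,i_t\}\in\mathscr{I}$, taking the minimum of the right-hand side over all such subsets yields the stated bound on $D(\vv,\kk,t)$. There is essentially no obstacle here: the corollary is simply the observation that each of the $\binom{m}{t}$ ``singleton-supported'' admissible vectors $\tt$ yields an independent instance of Lemma~\ref{induction_bound}, and the sharpest of these instances is the minimum.
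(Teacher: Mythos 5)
Your proof is correct and follows exactly the route the paper intends: the corollary is presented as an immediate consequence of Lemma~\ref{induction_bound}, obtained by ranging over all admissible vectors $\tt$ supported on a $t$-subset with entries equal to~1, which is precisely what you do. The verification that such $\tt$ is $(\kk,t)$-admissible and that the hypothesis of the lemma is met is carried out cleanly.
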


%\begin{proof}
%Let $\mathscr{B}$ be the set of blocks in a $t$-$(\vv,\kk,1)$ packing and $\{i_1, \ldots, i_t\} \subseteq \{1, \ldots, m\}$.  Consider the admissible vector $\tt$ which has 1 in positions $i_1, \ldots, i_t$ and 0 elsewhere.  Corresponding to this vector, we obtain that each $t$-tuple of the form $(x_{i_1}, x_{i_2}, \ldots, x_{i_t})$, where $x_{i_j} \in X_{i_j}$ for each $j$, is contained in at most one block of $\mathscr{B}$.  By Lemma~\ref{induction_bound}, it follows that 
%$$
%|\mathscr{B}| \leq \left\lfloor \frac{v_{i_1}}{k_{i_1}} \left \lfloor \frac{v_{i_2}}{k_{i_2}} \cdots \left\lfloor \frac{v_{i_t}}{k_{i_t}} \right\rfloor \cdots \right\rfloor \right\rfloor.
%$$
%\end{proof}

In the particular case that $t=2$ and $\lambda=1$, by combining the results of Proposition~\ref{indiv_part} and Corollary~\ref{schonheim_type}, we obtain the following bound.

\begin{proposition} \label{t=2_bound}
Let $\vv=(v_1, v_2, \ldots, v_m)$ and $\kk=(k_1, k_2, \ldots, k_m)$, where $\vv\geq \kk$ and $m \geq 2$.  Then
$$
D(\vv,\kk,2) \leq \min\left\{ \min_{k_i \geq 2} D(v_i, k_i, 2), \min_{\substack{i,j \in \{1, \ldots, m\} \\ i \neq j}} \left\lfloor \frac{v_i}{k_i} \left\lfloor \frac{v_j}{k_j}\right\rfloor\right\rfloor \right\}
$$
\end{proposition}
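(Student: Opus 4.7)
The plan is to observe that the stated bound is simply the combination of Proposition~\ref{indiv_part} and Corollary~\ref{schonheim_type} specialized to the case $t=2$, so the proof reduces to cataloguing all the admissible vectors $\tt$ and noting that each contributes one of the terms in the minimum.

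First I would recall that an admissible vector $\tt=(t_1,\ldots,t_m)$ for $t=2$ satisfies $\sum t_i = 2$ and $\tt \leq \kk$, so $\tt$ falls into exactly one of two types: either some single coordinate $t_i$ equals $2$ (which requires $k_i \geq 2$) and all others are $0$; or exactly two coordinates $t_i,t_j$ (with $i\neq j$) equal $1$ and the remaining coordinates are $0$. Since the generalized packing must enforce the ``at most one block'' condition on every admissible $\TT$, its size is bounded by the smallest bound obtained across all admissible $\tt$.

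For the first type, Proposition~\ref{indiv_part} supplies the bound $D(\vv,\kk,2)\leq D(v_i,k_i,2)$ for every $i$ with $k_i\geq 2$; taking the minimum over such $i$ gives the first term in the stated bound. For the second type, Corollary~\ref{schonheim_type} applied with $t=2$ and the pair $\{i,j\}\in\mathscr{I}$ yields
\[ D(\vv,\kk,2) \leq \left\lfloor \frac{v_i}{k_i}\left\lfloor \frac{v_j}{k_j}\right\rfloor\right\rfloor, \]
and taking the minimum over all pairs $i\neq j$ produces the second term.

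I do not expect any real obstacle here: both ingredients are already established and the only content is to verify the list of admissible vectors and to take the overall minimum. The one small thing to be slightly careful about is that the first minimum should be taken only over those $i$ with $k_i\geq 2$ (otherwise $\tt$ with $t_i=2$ would violate $\tt\leq\kk$), while the second minimum has no such restriction because each $k_i\geq 1$ automatically; this matches exactly the form of the inequality stated in the proposition.
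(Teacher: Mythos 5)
Your proposal is correct and is exactly the paper's argument: the proposition is stated there as the immediate combination of Proposition~\ref{indiv_part} (for admissible vectors with a single $t_i=2$, requiring $k_i\geq 2$) and Corollary~\ref{schonheim_type} with $t=2$ (for admissible vectors with two coordinates equal to $1$), noting that $m\geq 2$ guarantees the hypothesis $t\leq m$ of the corollary. Your remark about which indices the first minimum ranges over matches the paper's statement precisely.
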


We conclude this section by providing a way of constructing a generalized packing from an existing one by merging parts.  Again, this is an analogy of an idea for generalized covering designs (see~\cite[Proposition 3.22]{gencov}).

\begin{proposition} \label{merging}
Let $\vv=(v_1, v_2, \ldots, v_m)$ and $\kk=(k_1, k_2, \ldots, k_m)$, and suppose there exists a $t$-$(\vv,\kk,\lambda)$ packing with $N$ blocks.  Then for all $i,j\in \{1, 2, \ldots, m\}$ with $i < j$, there exists a $t$-$(\vv^+,\kk^+,\lambda)$ packing with $N$ blocks, where 
$$\vv^+=(v_1, \ldots, v_{i-1}, v_{i+1}, \ldots, v_{j-1}, v_{j+1}, \ldots, v_m,v_i+v_j)$$
and 
$$\kk^+=(k_1, \ldots, k_{i-1}, k_{i+1}, \ldots, k_{j-1}, k_{j+1}, \ldots, k_m,k_i+k_j).$$
In particular, $D_\lambda(\vv^+,\kk^+,t) \geq D_\lambda(\vv,\kk,t)$.
\end{proposition}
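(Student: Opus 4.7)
The plan is to start from a given $t$-$(\vv,\kk,\lambda)$ generalized packing $\mathcal{P}$ of size $N$ and construct $\mathcal{P}^+$ explicitly, block by block, by merging the $i^{\text{th}}$ and $j^{\text{th}}$ coordinates of each block into a single union placed at the end.  Concretely, for each block $\BB = (B_1, B_2, \ldots, B_m) \in \mathcal{P}$ define
\[
\BB^+ = (B_1, \ldots, B_{i-1}, B_{i+1}, \ldots, B_{j-1}, B_{j+1}, \ldots, B_m,\; B_i \cup B_j),
\]
and set $\mathcal{P}^+ = \{\BB^+ : \BB \in \mathcal{P}\}$, with the underlying point-partition $\XX^+$ obtained from $\XX$ by the corresponding operation on $X_i$ and $X_j$.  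Since $X_i \cap X_j = \emptyset$, the new last component $B_i \cup B_j$ is a subset of $X_i \cup X_j$ of size exactly $k_i + k_j$, so $\BB^+$ does lie in $\binom{\XX^+}{\kk^+}$.

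Next I would check that the map $\BB \mapsto \BB^+$ is injective on $\mathcal{P}$, so that $|\mathcal{P}^+| = N$.  If $\BB^+ = (\BB')^+$ then the coordinates outside $\{i,j\}$ agree directly, and intersecting the common merged coordinate $B_i \cup B_j = B_i' \cup B_j'$ with $X_i$ (respectively $X_j$) recovers $B_i = B_i'$ (respectively $B_j = B_j'$) by disjointness of $X_i$ and $X_j$.

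The main step is to verify the packing condition for $\mathcal{P}^+$.  Given any $(\vv^+,\kk^+,t)$-admissible tuple $\TT^+$, write its final component as $T^+_{m-1} = T_i \,\dot\cup\, T_j$ with $T_i = T^+_{m-1} \cap X_i$ and $T_j = T^+_{m-1} \cap X_j$.  If $|T_i| > k_i$ or $|T_j| > k_j$, then no block $\BB^+$ can contain $T^+_{m-1}$ in its last coordinate, so $\TT^+$ lies in zero blocks of $\mathcal{P}^+$ and the packing condition holds trivially.  Otherwise, re-inserting $T_i, T_j$ into positions $i, j$ of $\TT^+$ (and shifting the remaining components back to their original positions) produces a $(\vv,\kk,t)$-admissible tuple $\TT$, and disjointness of $X_i$ and $X_j$ gives the key equivalence
\[
\TT^+ \subseteq \BB^+ \quad \iff \quad \TT \subseteq \BB,
\]
because $T_i \cup T_j \subseteq B_i \cup B_j$ if and only if $T_i \subseteq B_i$ and $T_j \subseteq B_j$.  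Combined with the injectivity already established, this shows that the number of blocks of $\mathcal{P}^+$ containing $\TT^+$ equals the number of blocks of $\mathcal{P}$ containing $\TT$, which is at most $\lambda$.  The final inequality $D_\lambda(\vv^+, \kk^+, t) \geq D_\lambda(\vv, \kk, t)$ then follows by specializing $\mathcal{P}$ to be a maximum packing.  The only real obstacle is notational bookkeeping in the re-indexing after deleting positions $i$ and $j$ and remembering to handle the edge case where the split of $T^+_{m-1}$ between $X_i$ and $X_j$ is itself not $(\vv,\kk,t)$-admissible.
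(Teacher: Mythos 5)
Your proposal is correct and uses exactly the same construction as the paper: merge the $i^{\text{th}}$ and $j^{\text{th}}$ components of each block into their union, placed as the final coordinate. The paper simply asserts that the resulting blocks form a packing ("it is easy to see"), whereas you carry out that verification in full — including the injectivity of the merging map and the case where the merged $t$-tuple splits incompatibly with $(k_i,k_j)$ — so your argument is a fleshed-out version of the paper's proof rather than a different route.
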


\begin{proof}
Let $\mathscr{B}$ denote the collection of blocks in the $t$-$(\vv,\kk,\lambda)$ design.  We form a new collection of blocks $\mathscr{B}^+$ in the following way.  For each block $(B_1, B_2, \ldots, B_m) \in \mathscr{B}$, let 
$$(B_1, \ldots, B_{i-1}, B_{i+1}, \ldots, B_{j-1}, B_{j+1}, \ldots, B_m, B_i \cup B_j) \in \mathscr{B}^+.$$
It is easy to see that the $N$ blocks in $\mathscr{B}'$ form a $t$-$(\vv',\kk',\lambda)$ packing, and thus the bound follows.
\end{proof}

\subsection{The case $t=2$ and $\lambda=1$: a graphical interpretation} \label{section:graphical}

In~\cite{gencov}, many of the results obtained for generalized covering designs made use of an interpretation in terms of graphs.  Such an interpretation is also available for generalized packings.

Suppose $G$ is a graph, and $H$ a subgraph of $G$.  An {\em $H$-packing} of $G$ is a collection of edge-disjoint subgraphs of $G$, each isomorphic to $H$.  Now, an ordinary $2$-$(v,k,1)$ packing design can easily be regarded as a $K_k$-packing of $K_v$: that the subgraphs are edge-disjoint is equivalent to the condition that no pair of points occurs in more than one block.  We can also represent generalized packings in terms of graphs: to do so requires the following definition.

\begin{definition} \label{defn:join}
Let $G_1=(V_1,E_1)$ and $G_2=(V_2,E_2)$ be graphs with $V_1\cap V_2=\emptyset$.  Then the {\em join} of $G_1$ and $G_2$, denoted $G_1+G_2$, is the graph with vertex set $V_1\cup V_2$, and whose edge set is $E_1 \cup E_2 \cup \{xy\, :\, x\in V_1, y\in V_2\}$.
\end{definition}

For example, the join of two complete graphs is also complete, and the join of two empty graphs is a complete bipartite graph.  We note that this can be extended to a join of any number of graphs, and that this operation is associative.

Now suppose that $\vv=(v_1,v_2,\ldots,v_m)$  and $\kk=(k_1,k_2,\ldots,k_m)$ are vectors of positive integers with $\kk\leq\vv$.  We define a graph as follows,
\[ H_i=\left\{\begin{array}{cl}
\overline{K_{v_i}}, & \mbox{if $k_i=1$,}\\
K_{v_i}, & \mbox{if $k_i\geq 2$,}\\
\end{array}\right. \]
where $\overline{K_{v_i}}$ represents the complement of $K_{v_i}$ (that is, the empty graph).
Form the graph 
\[ G_{\vv,\kk} = H_1+\cdots+H_m \]
consisting of the join of the graphs $H_i$ such that $G_{\vv,\kk}$ has vertex set $V=\bigcup_i X_i$, where $|X_i|=v_i$ and each $X_i$ is the set of vertices of the corresponding $H_i$.

Analagous to~\cite[Theorem 3.5]{gencov} for generalized covering designs, we have the following result.

\begin{theorem} \label{thm:graph_packing}
Let $G_{\vv,\kk}$ be the graph described above.  Then a $2$-$(\vv,\kk,1)$ generalized packing is equivalent to a $K_k$-packing of $G_{\vv,\kk}$, with the property that for each copy of $K_k$, there are $k_i$ vertices in the set $X_i$ (for each $i$).
\end{theorem}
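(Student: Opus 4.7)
The plan is to set up a bijection between blocks of a $2$-$(\vv,\kk,1)$ generalized packing and copies of $K_k$ in $G_{\vv,\kk}$ whose vertices are distributed with exactly $k_i$ in each $X_i$, and then to verify that the ``at most one block'' condition from Definition~\ref{defn:gen_pack} translates precisely into edge-disjointness of those $K_k$-copies.

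First, given a block $\BB=(B_1,\ldots,B_m)\in{\XX\choose\kk}$, I would assign to it the subgraph of $G_{\vv,\kk}$ induced on $B_1\cup\cdots\cup B_m$. This subgraph has $\sum k_i = k$ vertices, with precisely $k_i$ in each part $X_i$. To see it is a clique, note that any two vertices lying in different parts $X_i$ and $X_j$ are joined by a cross-edge of the join, while any two vertices in the same $B_i$ are joined exactly when $H_i=K_{v_i}$, which holds whenever $k_i\geq 2$; if $k_i\leq 1$ then $B_i$ contains no pair at all. Hence the assignment does produce a $K_k$ with the required vertex distribution across the $X_i$.

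Second, I would pair the $(\vv,\kk,2)$-admissible tuples $\TT$ with the edges of $G_{\vv,\kk}$. For $t=2$, a $(\kk,2)$-admissible $\tt$ is either of the form with a single entry $t_i=2$ (forcing $k_i\geq 2$), in which case $\TT$ is a $2$-subset of $X_i$ and corresponds to an edge of $H_i=K_{v_i}$; or of the form with two entries $t_i=t_j=1$, in which case $\TT$ pins down a cross-edge $xy$ with $x\in X_i$, $y\in X_j$. Conversely every edge of $G_{\vv,\kk}$ arises in exactly one of these two ways, so the admissible $\TT$ contained in a block $\BB$ are in natural bijection with the edges of the clique attached to $\BB$.

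Combining the two observations, the packing axiom that each admissible $\TT$ lies in at most one block translates directly into: each edge of $G_{\vv,\kk}$ lies in at most one of the chosen copies of $K_k$. The reverse direction---recovering a generalized packing from an edge-disjoint $K_k$-packing of $G_{\vv,\kk}$ with the prescribed vertex distribution---is then immediate by reading the bijection backwards. I do not anticipate a real obstacle here; the only subtle point to flag is the deliberate choice of $H_i=\overline{K_{v_i}}$ when $k_i=1$, which is precisely what prevents spurious within-$X_i$ edges from appearing in $G_{\vv,\kk}$ in the cases where no admissible $\tt$ has $t_i\geq 2$.
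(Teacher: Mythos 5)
Your argument is correct and is exactly the intended one: the paper itself omits the proof (deferring to the analogous covering-design result), and the standard argument there is precisely your bijection between blocks and $K_k$-copies with $k_i$ vertices in each $X_i$, together with the matching of $(\vv,\kk,2)$-admissible tuples $\TT$ to the edges of $G_{\vv,\kk}$. Your closing remark about the choice $H_i=\overline{K_{v_i}}$ when $k_i=1$ correctly identifies the one point where the construction of $G_{\vv,\kk}$ has to be used.
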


%\begin{proof}
%Suppose $\mathcal{P}$ is a $2$-$(\vv,\kk,1)$ generalized packing.  Now, from each block in $\mathcal{P}$, we can easily construct a subgraph of $G_{\vv,\kk}$ with $k_i$ vertices in each set $X_i$, and the subgraph induced on those vertices is necessarily a complete graph $K_k$.  The admissible vectors $\tt$ have two possible forms: (i) a vector with a single entry of 2, and all other entries~0; and (ii) a vector with two entries~1 and the rest~0.  Type (i) vectors are only possible with the single~2 in position~$i$ with $k_i\geq 2$; these vectors ensure that no edge within a part with $k_i\geq 2$ appears in more than one copy of $K_k$.  Type (ii) vectors ensure that no edge between two parts is ever repeated.  Consequently, we have a $K_k$-packing of $G_{\vv,\kk}$.

%A similar argument works in the reverse direction: given a $K_k$-packing of $G_{\vv,\kk}$ with the specified form, each copy of $K_k$ gives us a block, and the way $G_{\vv,\kk}$ was constructed ensures these blocks form a $2$-$(\vv,\kk,1)$ generalized packing.
%\end{proof}

In the case of ordinary packings, where $\vv=(v)$ and $\kk=(k)$, this interpretation reduces to packing copies of $K_k$ into a complete graph $K_v$, a common way of thinking about packings.  In this situation, the {\em leave graph} (or the {\em leave} for short) is defined to be the subgraph of $K_v$ obtained by deleting the edges from all the blocks.  We give an analoguous definition for generalized packings below.

\begin{definition} \label{defn:leave}
Let $\mathcal{P}$ be a $2$-$(\vv,\kk,1)$ generalized packing.  The {\em leave graph}, or {\em leave}, of $\mathcal{P}$ is the subgraph of $G_{\vv,\kk}$ obtained by deleting the edges contained in blocks of $\mathcal{P}$.
\end{definition}

\begin{example} \label{example:K3packing}
Recall Example~\ref{example:edgecolouring}, where we exhibited a $2$-$(\vv,\kk,1)$ packing with $\vv=(5,4)$ and $\kk=(2,1)$.  By Theorem~\ref{thm:graph_packing}, this packing may be viewed as a $K_3$-packing of the graph $G_{\vv,\kk}$, which is illustrated, along with its leave, in Figure~\ref{figure:leave}.
\begin{figure}[htbp]
\centering
\subfigure[The graph $G_{\vv,\kk}$, where $\vv=(5,4)$ and $\kk=(2,1)$.]{
\includegraphics[width=0.3\textwidth]{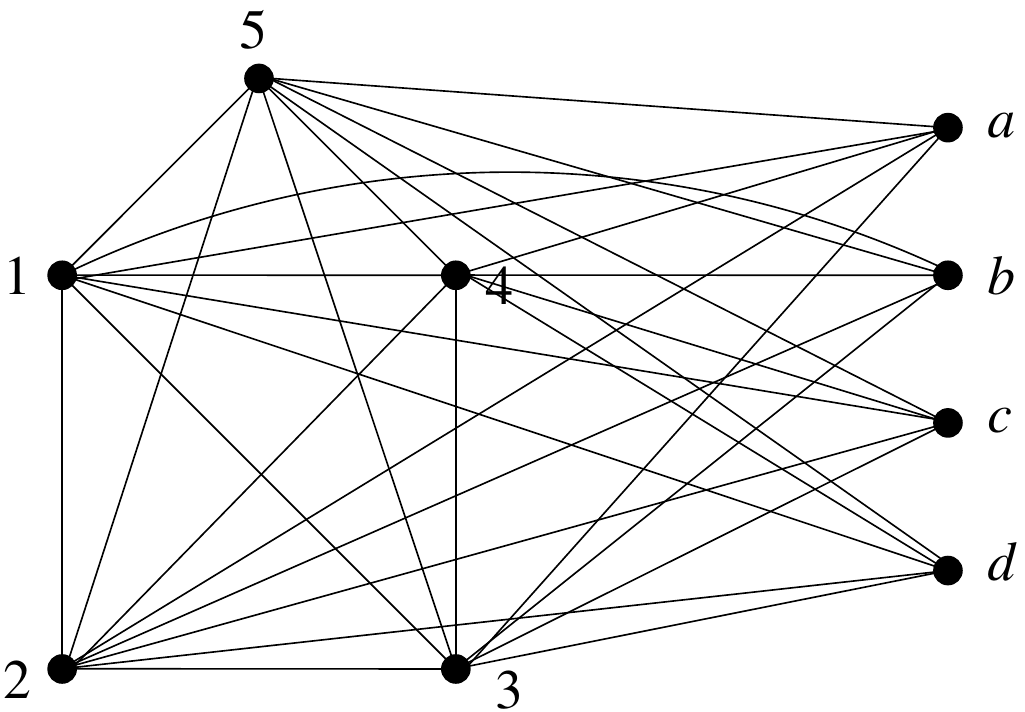}
}
\subfigure[The blocks of a $2$-$(\vv,\kk,1)$ generalized packing.]{
\includegraphics[width=\textwidth]{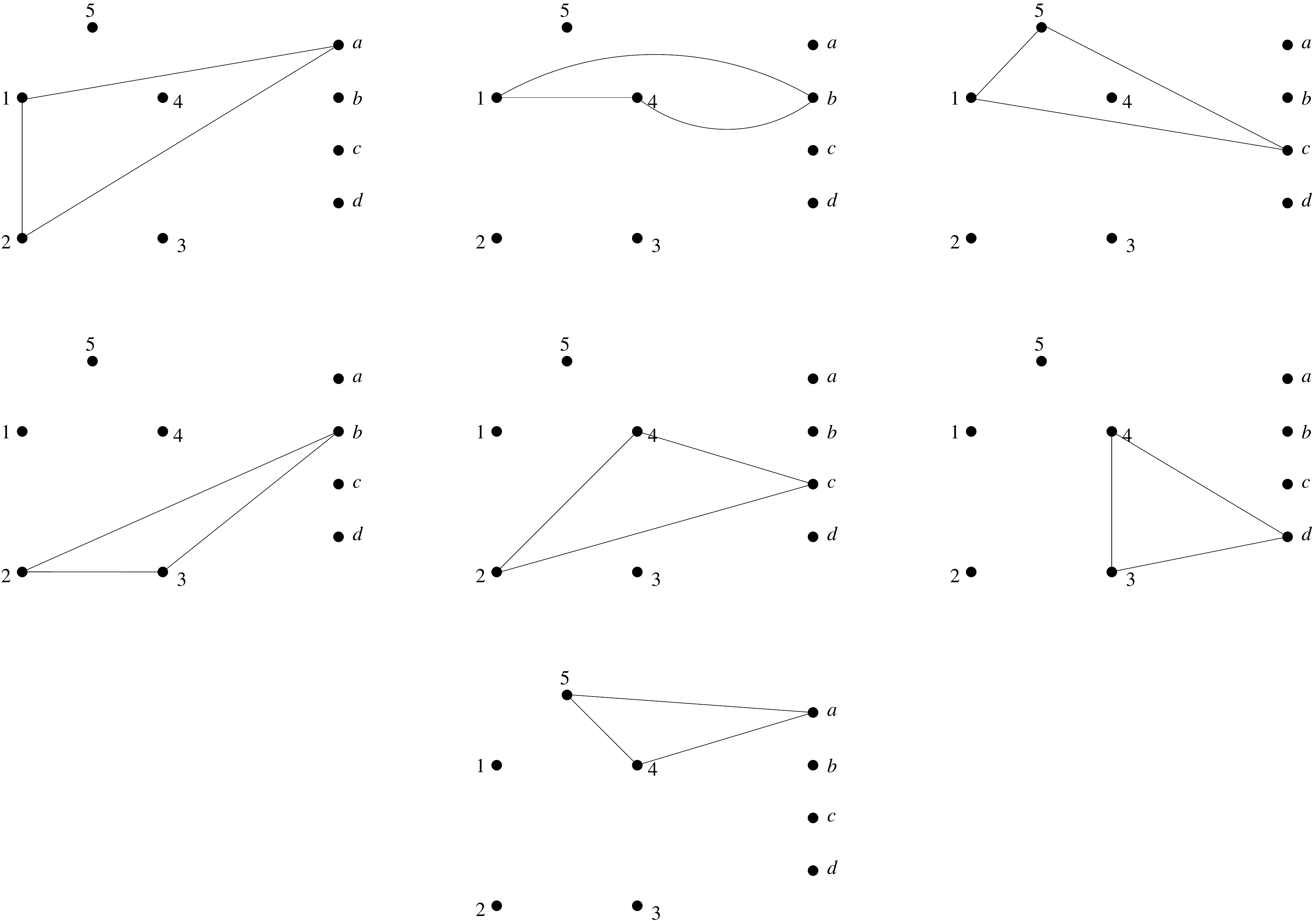}
}
\subfigure[The leave graph.]{
\includegraphics[width=0.3\textwidth]{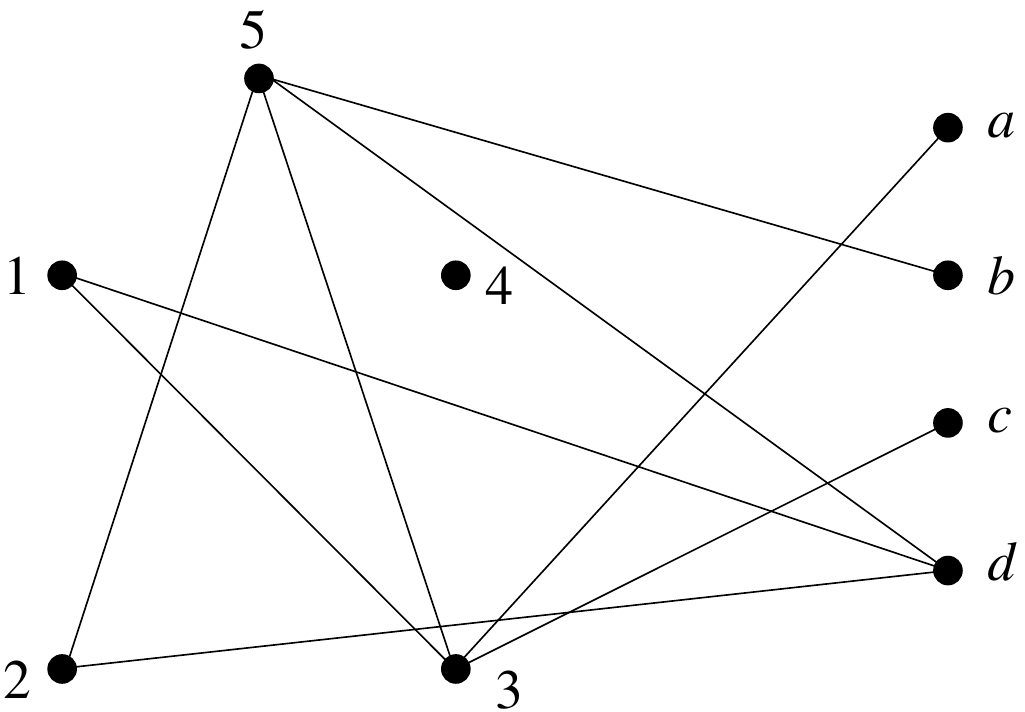}
}
\caption{The generalized packing of Example~\ref{example:K3packing}.}
\label{figure:leave}
\end{figure}
\end{example}

\subsection{Packing arrays and partial Latin squares: the case $\kk=(1,1\ldots,1)$} \label{section:PA}

One of Cameron's motivating examples in~\cite{cameron} for generalized $t$-$(\vv,\kk,\lambda)$ designs was the case $\kk=(1,1,\ldots,1)$, which (when $\vv=(s,s,\ldots,s)$) corresponds to orthogonal arrays.  Likewise, in~\cite{gencov} one of the motivating examples for generalized covering designs was covering arrays.  There is also a ``packing'' version of these objects, which we define now.

\begin{definition} \label{defn:packing_array}
Let $N,k,s,t,\lambda$ be positive integers.  A {\em packing array} $\PA_\lambda(N;k,s,t)$ is an $N\times k$ array with entries from an alphabet of size $s$, with the property that in every set of $t$ columns, any $t$-tuple of symbols from the alphabet occurs in at most $\lambda$ rows.
\end{definition}

Usually, we are interested in the case $\lambda=1$, and omit the subscript $\lambda$.  Note that such an array where every $t$-tuple occurs in {\em exactly} $\lambda$ rows is an {\em orthogonal array} (see the book by Hedayat et al.~\cite{hedayat}).  The typical question for packing arrays is to determine, for given values of $k$, $s$ and $t$, the largest $N$ such that there exists a $\PA(N;k,s,t)$: this value of $N$ is called the {\em packing array number}, and is denoted by ${\rm PAN}(k,s,t)$.  A listing of known packing array numbers is given in~\cite[Table III.3.123]{handbook}.

Unlike orthogonal arrays and covering arrays, not much attention has been paid to packing arrays in the literature, with the main references being the papers of Stevens and Mendelsohn~\cite{stevens_mendelsohn1,stevens_mendelsohn2}.  However, they arise as generalized packings in the same manner as did orthogonal arrays and covering arrays.

\begin{proposition} \label{prop:packing_array}
Let $\vv=(s,s,\ldots,s)$ and $\kk=(1,1,\ldots,1)$ have length~$k$.  Then a \mbox{$t$-$(\vv,\kk,\lambda)$} generalized packing (with $N$ blocks) is equivalent to a packing array $\PA_\lambda(N;k,s,t)$.
\end{proposition}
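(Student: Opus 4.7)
The plan is to set up an explicit bijection between blocks of a $t$-$(\vv,\kk,\lambda)$ generalized packing and rows of a packing array $\PA_\lambda(N;k,s,t)$, and then verify that the defining conditions translate into each other.

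First I would set up the bijection. Since $\kk=(1,1,\ldots,1)$ has length $k$, each block in ${\XX \choose \kk}$ is a $k$-tuple of singletons $(\{x_1\},\{x_2\},\ldots,\{x_k\})$ with $x_i \in X_i$, which I identify with the $k$-tuple $(x_1,x_2,\ldots,x_k)$. Since each $|X_i|=s$, after labelling the elements of each $X_i$ by a common alphabet of size $s$, the $N$ blocks become the $N$ rows of an $N\times k$ array over that alphabet (and conversely every such row yields a block). So blocks of $\mathcal{P}$ correspond bijectively to rows of an $N\times k$ array.

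Next I would analyze the admissible vectors. Since $\tt \leq \kk = (1,\ldots,1)$ with non-negative integer entries summing to $t$, each admissible $\tt$ is a $0$-$1$ vector with exactly $t$ ones, corresponding to a $t$-subset $I=\{i_1,\ldots,i_t\} \subseteq \{1,\ldots,k\}$ of column indices. An admissible $\TT$ then amounts to choosing a singleton $\{y_{i_j}\}\subseteq X_{i_j}$ for each $j\in I$ (with $T_i=\emptyset$ for $i\notin I$), i.e.\ a choice of a $t$-tuple of symbols $(y_{i_1},\ldots,y_{i_t})$ placed in the columns of $I$.

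Finally I would check that the two conditions coincide. A block $(\{x_1\},\ldots,\{x_k\})$ contains $\TT$ precisely when $x_{i_j}=y_{i_j}$ for every $j\in I$, i.e.\ the corresponding row of the array agrees with $(y_{i_1},\ldots,y_{i_t})$ in the columns indexed by $I$. Thus ``every admissible $\TT$ is contained in at most $\lambda$ blocks of $\mathcal{P}$'' translates verbatim to ``in every set of $t$ columns, any $t$-tuple of symbols occurs in at most $\lambda$ rows'', which is the defining property of a $\PA_\lambda(N;k,s,t)$. Since the identification is reversible and preserves $N$, the two objects are equivalent. There is no real obstacle here; the proof is a direct translation exercise, and the only thing to be careful about is matching the admissible $\tt$-vectors (which are forced to be $0$-$1$ by $\tt\leq\kk$) with $t$-subsets of columns.
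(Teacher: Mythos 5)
Your proof is correct, and it is exactly the standard translation the paper has in mind (the paper states this proposition without proof, treating it as immediate): blocks become rows, admissible $\tt$-vectors become $t$-subsets of columns, and the ``at most $\lambda$'' conditions correspond verbatim. Nothing is missing.
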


In particular, in the case where $\lambda=1$ and $t=2$, we have the following result.

\begin{proposition} \label{prop:packing_array2} 
The existence of the following objects are equivalent:
\begin{itemize}
\item[(i)] a packing array $\PA(N;k,s,2)$;
\item[(ii)] $k-2$ mutually orthogonal partial Latin squares of order $v$, each with the same $N$ cells filled;
\item[(iii)] a $2$-$(\vv,\kk,1)$ generalized packing with $N$ blocks, where $\vv = (s, s, \ldots, s)$ and $\kk = (1, 1, \ldots, 1)$ have length $k$.
\end{itemize}
\end{proposition}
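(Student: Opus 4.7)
The equivalence of (i) and (iii) is immediate from Proposition~\ref{prop:packing_array} specialized to $t=2$ and $\lambda=1$, so the real content is the equivalence (i)$\Leftrightarrow$(ii). I will give an explicit bijection between the rows of a $\PA(N;k,s,2)$ and the cell entries of $k-2$ mutually orthogonal partial Latin squares of order $s$ with the same $N$ cells filled, and verify that the defining conditions on each side correspond.

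Label the columns of the array by $1,2,\ldots,k$ and use the alphabet $\{1,2,\ldots,s\}$ throughout. Given a row $(a_1,a_2,a_3,\ldots,a_k)$ of a $\PA(N;k,s,2)$, my plan is to interpret columns $1$ and $2$ as the row- and column-coordinates of a cell, and columns $3,\ldots,k$ as the symbols placed in that cell in each of $k-2$ partial Latin squares $L_1,\ldots,L_{k-2}$; that is, set $L_{j-2}(a_1,a_2)=a_j$ for $j=3,\ldots,k$. Conversely, starting from $k-2$ partial Latin squares $L_1,\ldots,L_{k-2}$ of order $s$ that are filled on exactly the same $N$ cells, for each filled cell $(r,c)$ I would form a row $(r,c,L_1(r,c),\ldots,L_{k-2}(r,c))$. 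These two operations are clearly mutual inverses, and the fact that the $L_j$ share the same $N$ filled cells forces the number of rows on each side to agree.

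Now I would check that the packing-array condition in each pair of columns corresponds to the correct partial-Latin-square condition. For the pair of columns $(1,2)$: no repeated ordered pair means no cell $(r,c)$ is filled more than once, which is exactly the definition of ``partial Latin square''. For a pair $(1,j)$ with $j\geq 3$: no repetition means that in $L_{j-2}$, no symbol appears twice in the same row. For a pair $(2,j)$ with $j\geq 3$: similarly, no symbol appears twice in the same column of $L_{j-2}$. Combined with the ``partial Latin square'' property already established, these say precisely that each $L_{j-2}$ is a partial Latin square in the usual sense. Finally, for a pair $(i,j)$ with $3\leq i<j$: no repetition means that as $(r,c)$ ranges over the common set of filled cells, the ordered pair $\bigl(L_{i-2}(r,c),L_{j-2}(r,c)\bigr)$ takes no value more than once, which is the standard definition of orthogonality of two partial Latin squares on the same set of filled cells.

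The routine obstacle, and the only one I anticipate, is keeping the bookkeeping consistent across the $\binom{k}{2}$ pairs of columns; but once the columns are divided into the ``coordinate'' columns $\{1,2\}$ and the ``symbol'' columns $\{3,\ldots,k\}$, each case above reads off directly from the definitions, and nothing beyond Proposition~\ref{prop:packing_array} is needed to close the loop to (iii).
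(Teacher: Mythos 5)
Your proof is correct; the paper states this proposition without proof, and your column-splitting bijection (coordinates from columns $1,2$, symbols from columns $3,\ldots,k$) is exactly the standard argument, matching the way the paper itself uses Latin rectangles in Proposition~\ref{max_PA_construction}. The only point worth flagging is that ``order $v$'' in item (ii) is evidently a typo for ``order $s$'', which you have silently (and correctly) assumed.
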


Of course, we wish to consider arbitrary vectors $\vv$.  Without loss of generality, if $\kk=(1,1,\ldots,1)$, we may assume that $v_1\leq v_2 \leq \cdots \leq v_k$.  Now, in this case, the bound given by Proposition~\ref{t=2_bound} simplifies greatly as follows:
$$
D(\vv,\kk,2) \leq \min_{i\neq j} v_i v_j = v_1 v_2.
$$
To construct a generalized packing meeting this bound, we can use the same kind of idea as Proposition~\ref{prop:packing_array2}.  In particular, we use a particular class of partial Latin square, which we now define.

\begin{definition} \label{def:molr}
A $v_1 \times v_2$ {\em Latin rectangle}, where $v_1 \leq v_2$, is a $v_1 \times v_2$ array with $v_2$ symbols such that each symbol occurs exactly once in each row, and at most once in each column.  The $v_1 \times v_2$ Latin rectangles $L = (\ell_{ij})$ and $M=(m_{ij})$ are said to be {\em orthogonal} if $(\ell_{ij},m_{ij}) = (\ell_{i'j'},m_{i'j'})$ implies $i=i'$ and $j=j'$.  %; the $v_1 \times v_2$ Latin rectangles $L_1$, $L_2$, $\ldots$, $L_r$ are {\em mutually orthogonal} if each pair of them are orthogonal.  
We use the notation $\mathrm{MOLR}(v_1, v_2)$ to denote mutually orthogonal $v_1 \times v_2$ Latin rectangles.
\end{definition}

\begin{proposition} \label{max_PA_construction}
Let $\vv = (v_1, v_2, \ldots, v_k)$, where $v_1 \leq v_2 \leq \cdots \leq v_k$, and $\kk = (1, 1, \ldots, 1)$.  If there exist $k-2$ $\mathrm{MOLR}(v_1, v_2)$, then $D(\vv,\kk,2) = v_1 v_2$.
\end{proposition}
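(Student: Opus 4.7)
The plan is to invoke the upper bound already in hand and match it by an explicit construction. The discussion immediately preceding the proposition establishes, via Proposition~\ref{t=2_bound}, that $D(\vv,\kk,2)\le v_1v_2$, so the only work is to exhibit a $2$-$(\vv,\kk,1)$ generalized packing on exactly $v_1v_2$ blocks from the hypothesized $k-2$ $\mathrm{MOLR}(v_1,v_2)$.

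For the construction, I would proceed exactly in the spirit of Proposition~\ref{prop:packing_array2}, but using rectangles rather than squares. Label $X_1=\{1,\ldots,v_1\}$, $X_2=\{1,\ldots,v_2\}$, and $X_\ell=\{1,\ldots,v_\ell\}$ for $\ell\ge 3$; since $v_\ell\ge v_2$, we may regard the symbol set $\{1,\ldots,v_2\}$ of the Latin rectangles as a subset of $X_\ell$. Denote the given mutually orthogonal rectangles by $L_1,\ldots,L_{k-2}$, and for each cell $(i,j)\in X_1\times X_2$ declare the block
\[ B_{i,j} \;=\; \bigl(\{i\},\,\{j\},\,\{L_1(i,j)\},\,\{L_2(i,j)\},\,\ldots,\,\{L_{k-2}(i,j)\}\bigr). \]
This produces $v_1v_2$ blocks; it remains to verify that every admissible $\tt=(\ldots,1,\ldots,1,\ldots)$ yields no repetition.

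The verification breaks into four cases, each corresponding to a kind of pair of parts. Pairs across $X_1\times X_2$ trivially appear in exactly one $B_{i,j}$. For pairs across $X_1\times X_\ell$ with $\ell\ge 3$, fix $i\in X_1$ and a symbol $s$; the row-property of the Latin rectangle $L_{\ell-2}$ (each symbol appears exactly once per row) gives a unique $j$ with $L_{\ell-2}(i,j)=s$, so $(i,s)$ sits in exactly one block. For pairs across $X_2\times X_\ell$ with $\ell\ge 3$, the column-property (each symbol appears at most once per column) guarantees at most one $i$ with $L_{\ell-2}(i,j)=s$ for fixed $j,s$. Finally, for pairs across $X_\ell\times X_{\ell'}$ with $3\le\ell<\ell'$, the orthogonality of $L_{\ell-2}$ and $L_{\ell'-2}$ forces the ordered pair of symbols $(s,s')$ to appear in at most one cell $(i,j)$, hence in at most one block. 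Combining these four cases shows the collection is a $2$-$(\vv,\kk,1)$ generalized packing, and matching the upper bound gives $D(\vv,\kk,2)=v_1v_2$.

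The whole argument is essentially routine case-checking; the only place one has to be even slightly careful is the asymmetric rectangular setting, where the roles of $X_1$ and $X_2$ are genuinely different (rows versus columns of the rectangles), and one must use the row-property for the $X_1$-side and the column-property for the $X_2$-side. No single step appears to be a real obstacle.
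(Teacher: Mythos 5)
Your proposal is correct and follows essentially the same route as the paper: the upper bound from Proposition~\ref{t=2_bound} together with the block construction $(\{i\},\{j\},\{L_1(i,j)\},\ldots,\{L_{k-2}(i,j)\})$ from the $k-2$ $\mathrm{MOLR}(v_1,v_2)$. The only cosmetic difference is that the paper first builds the packing for $\vv'=(v_1,v_2,\ldots,v_2)$ and then invokes Proposition~\ref{increase_v} to pass to general $\vv$, whereas you absorb that step by embedding the symbol set directly into each larger $X_\ell$; your explicit four-case verification is a correct expansion of what the paper leaves implicit.
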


\begin{proof}
By Proposition~\ref{t=2_bound}, $D(\vv,\kk,2) \leq v_1 v_2$.  Suppose $L_1,L_2,\ldots,L_{k-2}$ are a collection of $k-2$ $\mathrm{MOLR}(v_1,v_2)$.  These rectangles give rise to a $2$-$(\vv',\kk,1)$ packing of size $v_1 v_2$, where $\vv'=(v_1, v_2, \ldots, v_2)$, by taking blocks of the form $( \{i\}, \{j\}, \{L_1(i,j)\}, \{L_2(i,j)\}, \ldots, \{L_{k-2}(i,j)\})$ where $1 \leq i\leq v_1$ and $1 \leq j \leq v_2$.  Hence, by Lemma~\ref{increase_v}, there is a $2$-$(\vv,\kk,1)$ packing of size $v_1 v_2$.
\end{proof}

Clearly, when there exist $k-2$ $\mathrm{MOLS}(v_2)$, we can use these to obtain the required $\mathrm{MOLR}(v_1,v_2)$, although there are examples of MOLR that do not arise from MOLS.  For example, there exist two $\mathrm{MOLR}(4,6)$~\cite{harvey_winterer}, from which we can construct a $2$-$(\vv,\kk,1)$ generalized packing, for $\vv=(4,6,v_3,v_4)$ (where $6\leq v_3 \leq v_4$) and $\kk=(1,1,1,1)$, with $4\times 6=24$ blocks.

The case where $k=4$, i.e.\ when we require two orthogonal Latin rectangles, is considered in detail in Section~\ref{section:1111}.

\subsection{Resolvability and block colouring: the case $t=2$, $\kk = (k-1,1)$}
Recall Proposition~\ref{prop:edgecolouring}, which showed that if $\kk=(2,1)$, a generalized packing is equivalent to an edge-coloured graph.  This idea holds more generally.

A {\em block colouring} of a block design is an assignment of colours to the blocks, so that blocks which intersect receive different colours.  As for graphs, the {\em chromatic index} of a design is the smallest number of colours needed for a block colouring.  We notice that, for a given colour, the blocks assigned that colour must all be disjoint; if these blocks contain all the points of the design, we call them a {\em parallel class}.  More generally, any collection of disjoint blocks is referred to as a {\em partial parallel class}; if all points except one appear, it is an {\em almost parallel class}.  A design where the blocks can be partitioned into parallel classes is said to be {\em resolvable}; the partition into parallel classes is called a {\em resolution} of the design.  (More information on resolvable designs can be found in~\cite[{\S}II.7]{handbook}.)

In~\cite{cameron}, Cameron observes that when $\kk = (k-1,1)$ and $\vv=(v_1,v_2)$, a generalized $2$-$(\vv,\kk,\lambda)$ design is equivalent to a resolvable $2$-$(v_1,k-1,\lambda)$ design; here, $v_2$ must equal the number of parallel classes.  Basically, a block in the generalized design consists of a block of the $2$-$(v-1,k-1,\lambda)$-design, with an element of $X_2$ indexing the parallel class it is in.  The same idea works for generalized packings.

\begin{proposition} \label{prop:resolvable}
Suppose $\vv=(v_1,v_2)$ and $\kk=(k-1,1)$.  Then a $2$-$(\vv,\kk,\lambda)$ generalized packing is equivalent to a $2$-$(v_1,k-1,\lambda)$ packing whose blocks are partitioned into at most $v_2$ partial parallel classes.  
\end{proposition}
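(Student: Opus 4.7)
The plan is to extend the argument given in the proof of Proposition~\ref{prop:edgecolouring} from the case $\kk=(2,1)$ to the case $\kk=(k-1,1)$. A block of the generalized packing has the shape $(B,\{\alpha\})$ with $B\in{X_1\choose k-1}$ and $\alpha\in X_2$, and there are exactly two $(\kk,2)$-admissible vectors $\tt$: namely $\tt=(2,0)$ and $\tt=(1,1)$. Each dictates one of the two conditions we need to verify.

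First I would build the packing with a block colouring from a generalized packing $\mathcal{P}$. Let $\mathcal{D}=\{B : (B,\{\alpha\})\in\mathcal{P}\text{ for some }\alpha\in X_2\}$, treated as a multiset so that first components carrying different labels are counted with their multiplicities. Applying the packing property to $\tt=(2,0)$ shows that any $2$-subset of $X_1$ lies in at most $\lambda$ blocks of $\mathcal{D}$, so $\mathcal{D}$ is a $2$-$(v_1,k-1,\lambda)$ packing. For each $\alpha\in X_2$ set $\mathcal{C}_\alpha=\{B : (B,\{\alpha\})\in\mathcal{P}\}$; the classes $\mathcal{C}_1,\ldots,\mathcal{C}_{v_2}$ partition $\mathcal{D}$. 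Applying the packing property to $\tt=(1,1)$ with $\TT=(\{x\},\{\alpha\})$ says that each point $x\in X_1$ lies in at most $\lambda$ blocks of $\mathcal{C}_\alpha$; when $\lambda=1$ this is precisely the statement that the blocks in $\mathcal{C}_\alpha$ are pairwise disjoint, i.e.\ $\mathcal{C}_\alpha$ is a partial parallel class.

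For the reverse direction, given a $2$-$(v_1,k-1,\lambda)$ packing $\mathcal{D}$ together with a partition of its blocks into at most $v_2$ partial parallel classes $\mathcal{C}_1,\ldots,\mathcal{C}_{v_2}$ (some possibly empty), define $\mathcal{P}=\{(B,\{\alpha\}) : B\in\mathcal{C}_\alpha,\ 1\leq\alpha\leq v_2\}$. The $\tt=(2,0)$ admissibility condition is then exactly the packing property of $\mathcal{D}$, while the $\tt=(1,1)$ condition follows from the fact that no two blocks in the same class $\mathcal{C}_\alpha$ share a point.

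I do not expect a genuine obstacle, since the two admissibility conditions decouple cleanly into the ``packing'' and ``colouring'' halves of the statement. The only bookkeeping subtlety concerns the general case $\lambda>1$: there one must treat $\mathcal{D}$ and each $\mathcal{C}_\alpha$ as multisets (so a given $(k-1)$-subset $B$ may be assigned several labels), and the natural reading of the $\tt=(1,1)$ constraint becomes that each point appears in at most $\lambda$ blocks per colour, rather than strict disjointness within a colour class.
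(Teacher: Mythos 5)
Your proof is correct and follows essentially the same route the paper intends: the paper states this proposition without a formal proof, relying on the observation that the argument of Proposition~\ref{prop:edgecolouring} (splitting the two admissible vectors $\tt=(2,0)$ and $\tt=(1,1)$ into the ``packing'' and ``colouring'' conditions) carries over verbatim to $\kk=(k-1,1)$, which is exactly what you do. Your remark about the $\lambda>1$ case is a fair and worthwhile caveat, since the phrase ``partial parallel class'' literally matches the $\tt=(1,1)$ condition only when $\lambda=1$.
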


Equivalently, such a generalized packing may be thought of as a $v_2$-block colouring of an ordinary $2$-$(v_1, k-1,\lambda)$ packing. 

In this case, the bounds from Proposition~\ref{indiv_part} and Corollary~\ref{schonheim_type} simplify as follows.

\begin{lemma} \label{(k-1,1) bound}
If $\kk = (k-1,1)$, then $D(\vv, \kk, 2) \leq \min \left\{ D(v_1, k-1, 2), %\left\lfloor \binom{v_1}{2}/\binom{k-1}{2} \right\rfloor, 
{v_2}\left\lfloor v_1/(k-1)\right\rfloor \right\}$.
\end{lemma}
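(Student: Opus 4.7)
The plan is to observe that this lemma is essentially an immediate specialization of Proposition~\ref{t=2_bound} to the case $m=2$ with $\kk=(k-1,1)$, so no new machinery is needed; the only care required is in unwinding the nested floors.

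First, for the bound $D(v_1,k-1,2)$: Proposition~\ref{indiv_part} gives $D(\vv,\kk,2) \le \min_{k_i \ge t} D(v_i,k_i,t)$ with $t=2$. Here $k_2 = 1 < 2$, so the only index contributing is $i=1$, yielding $D(\vv,\kk,2) \le D(v_1,k-1,2)$. This handles the first term of the minimum.

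Next, for the bound $v_2 \lfloor v_1/(k-1)\rfloor$: I would apply Lemma~\ref{induction_bound} (or equivalently Corollary~\ref{schonheim_type}) with $t=2$. The key point, which is the only subtlety in the proof, is that the bound from Lemma~\ref{induction_bound} depends on the order in which the indices $i_1,i_2$ are written, since the floors are nested. Taking the ordering $(i_1,i_2)=(2,1)$ gives
\[
D(\vv,\kk,2) \le \left\lfloor \frac{v_2}{1}\left\lfloor \frac{v_1}{k-1}\right\rfloor\right\rfloor = v_2 \left\lfloor \frac{v_1}{k-1}\right\rfloor,
\]
since $v_2 \lfloor v_1/(k-1)\rfloor$ is already an integer. (Note that the alternative ordering $(i_1,i_2)=(1,2)$ would give the weaker bound $\lfloor v_1 v_2/(k-1)\rfloor$, so we really do need to pick the right order.)

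Combining the two bounds produces the stated result. There is no significant obstacle: the whole argument is a direct application of two previously established propositions, with the only watch-point being the choice of ordering in the nested floor expression.
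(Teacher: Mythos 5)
Your proposal is correct and matches the paper's (implicit) argument: the lemma is stated there as a direct simplification of Proposition~\ref{indiv_part} and Corollary~\ref{schonheim_type}, which is exactly how you derive the two terms of the minimum. Your remark about choosing the ordering $(i_1,i_2)=(2,1)$ in the nested floors is the right point of care, and the rest goes through as you describe.
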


Later in the paper (in Sections~\ref{section:2_1} and~\ref{section:3_1}), we will see that this bound is always met when $k=3$, and usually met when $k=4$.

\subsection{Orthogonal colourings and orthogonal resolutions: the case $t=2$, $\kk=(k-2,1,1)$} \label{section:orthog_colouring}
In the case that $\kk=(k-2,1,1)$, the necessary conditions given in~\cite[Proposition 1]{cameron} assert that a $2$-$(\vv,\kk,1)$ design exists only if $\vv=\kk=(k-2,1,1)$, which is a trivial case.  Nevertheless, generalized packings with $\kk=(k-2,1,1)$ have interesting design-theoretical interpretations, and many objects in the literature arise as examples.

Let $\mathscr{D} = (X,\mathscr{B})$ be a $2$-$(v,k-2,1)$ packing, and let $f: \mathscr{B} \rightarrow \{1, 2, \ldots, s\}$ and $g: \mathscr{B} \rightarrow \{1, 2, \ldots, t\}$ be two proper block colourings of $\mathscr{D}$.  Let $F_1$, $F_2$, $\ldots$, $F_s$ be the colour classes of the block colouring $f$ and let $G_1$, $G_2$, $\ldots$, $G_t$ be the colour classes of $g$.  We say that block colourings $f$ and $g$ are {\em orthogonal} if $|F_i \cap G_j| \leq 1$ for any $i \in \{1, \ldots, s\}$ and $j \in \{ 1, \ldots, t\}$.  That is, if two blocks receive the same colour in one of the colourings, then they must receive different colours in the other.

From colourings $f$ and $g$, we may create an $s \times t$ array $A$ in the following manner: for each block $B$, we place $B$ in the $(i,j)$-entry of $A$ if $f(B) = i$ and $g(B) = j$.  The array $A$ has the following properties:
\begin{itemize}
\item[(i)] each entry of $A$ is either empty or else contains a $(k-2)$-subset of $X$;
\item[(ii)] each symbol in $X$ appears at most once in each row and at most once in each column;
\item[(iii)] each pair of elements occurs at most once as a subset of an entry of $A$.
\end{itemize}
Conversely, it is easy to see that given an $s \times t$ array $A$ satisfying properties 1, 2 and 3, then by letting $\mathscr{B}$ be the set of (nonempty) entries of $A$, and for each $B \in \mathscr{B}$, $f(B)=i$ and $g(B)=j$, where $B$ appears in the $(i,j)$-entry of $A$, then we obtain two orthogonal block colourings of the packing $(X,\mathscr{B})$.

Moreover, an $s \times t$ array $A$ satisfying properties 1, 2 and 3 is equivalent to a $2$-$(\vv,\kk,1)$ packing, where $\vv=(v,s,t)$ and $\kk=(k-2,1,1)$, with blocks of the form $(B,i,j)$, where $B$ is the $(i,j)$-entry of $A$.  Thus, a $2$-$(\vv,\kk,1)$ packing is equivalent to the existence of two orthogonal block colourings of a $2$-$(v,k-2,1)$ packing, with $s$ and $t$ colour classes.  

In the case where each block colouring is a resolution of the design, we refer to them as {\em orthogonal resolutions}, and the design is said to be {\em doubly resolvable}.  For example, a doubly resolvable Steiner triple system is known as a {\em Kirkman square} (see Colbourn et al.~\cite{colbourn_et_al}); this is a $2$-$(\vv,\kk,1)$ generalized packing with $\vv=(v,r,r)$ (where $r=(v-1)/2$) and $\kk=(3,1,1)$.  The name arises as the blocks are arranged in an $r\times r$ square array.  The smallest known example of a Kirkman square is for $v=27$: see~\cite[Figure 1]{colbourn_et_al}.
%In the case that $\kk=(k-2,1,1)$ and $t=2$, generalized packing designs bear a close relationship to orthogonal resolutions.

The case where $\kk=(2,1,1)$ is considered in detail in Section~\ref{(2,1,1) case}.

\section{The case $t=2$ and $k=3$} \label{section:blocksize3}

In~\cite{cameron}, Cameron's motivating examples were generalized 2-designs where $k=3$: these correspond to Steiner triple systems, 1-factorizations of complete graphs and Latin squares.  We extend this characterization to generalized packings where $k=3$, where these provide prototypical examples for the three possibilities for $\kk$.

\subsection{$\kk = (3)$: ordinary packings}
When $\kk=(3)$, a generalized $2$-$(\vv,\kk,1)$ packing is equivalent to an ordinary $2$-$(v,3,1)$ packing, also sometimes known as a {\em partial Steiner triple system}.  In this case, the packing numbers, structure of maximum packings, and leave graphs are all known, and were determined in the 1966 paper of Sch\"onheim~\cite{schonheim}.  They depend on congruences modulo~6, and are summarised in Table~\ref{table:triples} below (taken from~\cite[Table VI.40.22]{handbook}).
\begin{table}[hbt]
\centering
\renewcommand{\arraystretch}{1.2}
\begin{tabular}{|c|c|c|} \hline
$v \equiv$   & $D(v,3,2)$     & Structure of leave graph \\ \hline
$1,3\pmod 6$ & $v(v-1)/6$     & empty \\ \hline
$0,2\pmod 6$ & $v(v-2)/6$     & 1-factor \\ \hline
$4 \pmod 6$  & $(v^2-2v-2)/6$ & $K_{1,3}$ and a matching of size $(v-4)/2$\\ \hline
$5 \pmod 6$  & $(v^2-v-8)/6 $ & 4-cycle \\ \hline
\end{tabular}
\renewcommand{\arraystretch}{1.0}
\caption{Maximum $2$-$(v,3,1)$ packings.}
\label{table:triples}
\end{table}

Of course, the cases $v\equiv 1,3 \pmod 6$ are Steiner triple systems.  A detailed description of the constructions of maximum packings can be found in Chapter 4 of Lindner and Rodger~\cite{lindner_rodger}.  We note that $D(v,3,2)$ meets the Johnson--Sch\"onheim bound (Proposition~\ref{johnson_schonheim}) with equality when $v \equiv 0,1,3 \pmod 6$, and is 1 less than the Johnson--Sch\"onheim bound otherwise.  See also the survey by Mills and Mullin~\cite{mills_mullin}, where the case $\lambda>1$ is also described: this was solved by Hanani~\cite{hanani}.

\subsection{$\kk = (2,1)$: edge-colourings and factorizations} \label{section:2_1}
Recall from Proposition~\ref{prop:edgecolouring} that if $\kk=(2,1)$, a $2$-$(\vv,\kk,1)$ generalized packing corresponds to a proper edge-colouring of a graph.  To construct maximum generalized packings, it helps to consider graphs, and edge-colourings, with some structure.

We will show that the bound given in Lemma~\ref{(k-1,1) bound} can be achieved.  When $\kk=(2,1)$, that reduces to 
$$
D(\vv, \kk, 2) \leq \min \left\{ \binom{v_1}{2}, {v_2}\left\lfloor \frac{v_1}{2}\right\rfloor \right\}.
$$

When a generalized 2-design exists for $\kk = (2,1)$, it is equivalent to a 1-factorization of a complete graph $K_{v_1}$.  (One can regard each 1-factor as a colour, which appears at every vertex.)  This occurs when $v_1$ is even and $v_2 = v_1-1$.  We can extend this idea to obtain maximum generalized packings for arbitrary $\vv=(v_1,v_2)$.

\begin{proposition} \label{prop:2_1_case}
Suppose $\vv=(v_1,v_2)$ and $\kk=(2,1)$.  Then there exists a $2$-$(\vv,\kk,1)$ generalized packing meeting the bound given in Lemma~\ref{(k-1,1) bound}.
\end{proposition}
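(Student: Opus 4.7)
The plan is to invoke Proposition~\ref{prop:edgecolouring}, which converts the problem into a purely graph-theoretic one: a $2$-$(\vv,\kk,1)$ generalized packing with $\kk=(2,1)$ is the same as a properly edge-coloured simple graph $G$ on $v_1$ vertices using at most $v_2$ colours, with blocks corresponding to coloured edges. So it suffices to construct such a $G$ with as many edges as possible. Since each colour class is a matching of size at most $\lfloor v_1/2\rfloor$ and there are at most $v_2$ classes, any such graph has at most $v_2\lfloor v_1/2\rfloor$ edges; it also has at most $\binom{v_1}{2}$ edges by simplicity. This recovers the upper bound of Lemma~\ref{(k-1,1) bound}, so I only need constructions meeting it.

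I would split on the parity of $v_1$. When $v_1$ is even, $K_{v_1}$ admits a $1$-factorization $\{F_1,F_2,\ldots,F_{v_1-1}\}$. Let $r=\min\{v_2,v_1-1\}$, take $G=F_1\cup F_2\cup\cdots\cup F_r$, and colour each $F_i$ with its own colour~$i$. When $v_1$ is odd, perform the standard trick: $1$-factorize $K_{v_1+1}$ and delete one fixed vertex $v_0$; each $1$-factor of $K_{v_1+1}$ then restricts to a near-$1$-factor on $v_1$ vertices (missing the old partner of $v_0$), giving $v_1$ pairwise edge-disjoint near-$1$-factors whose union is $K_{v_1}$. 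Let $r=\min\{v_2,v_1\}$ and take the union of $r$ of these near-$1$-factors, coloured distinctly.

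It remains to verify the edge counts. In the even case, each matching contributes $v_1/2$ edges, so $G$ has $r\cdot v_1/2$ edges; when $v_2\geq v_1-1$ this equals $\binom{v_1}{2}$, and when $v_2<v_1-1$ this equals $v_2\lfloor v_1/2\rfloor$. In the odd case, each near-$1$-factor contributes $(v_1-1)/2$ edges, so $G$ has $r\cdot(v_1-1)/2$ edges; when $v_2\geq v_1$ this equals $\binom{v_1}{2}$, and when $v_2<v_1$ this equals $v_2\lfloor v_1/2\rfloor$. In all four subcases we meet the bound, completing the construction.

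There is no real obstacle in this proof: the two well-known $1$-factorization ingredients (for $K_{v_1}$ with $v_1$ even, and its odd-order analogue obtained by vertex-deletion from $K_{v_1+1}$) do all the work, and the only thing to watch is the parity bookkeeping to confirm the edge count matches the minimum of the two expressions in Lemma~\ref{(k-1,1) bound}.
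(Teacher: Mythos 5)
Your proof is correct and follows essentially the same route as the paper: a $1$-factorization of $K_{v_1}$ when $v_1$ is even, and a near-$1$-factorization (equivalently, deleting a vertex from a $1$-factorization of $K_{v_1+1}$) when $v_1$ is odd, taking $\min\{v_2,\,v_1-1\}$ or $\min\{v_2,\,v_1\}$ of the colour classes respectively. The parity bookkeeping at the end matches the paper's case analysis exactly.
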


\begin{proof}
First, we suppose $v_1$ is even.  If $v_2 = v_1-1$, a 1-factorization of $K_{v_1}$ gives us a generalized 2-design, and thus a a generalized packing whose leave is empty.  If $v_1$ is even and $v_2 \geq v_1-1$, then we use the same design, with the excess vertices in $X_2$ not used in any block.  The size of the design is $\binom{v_1}{2} = D(v_1,2,2)$, and the leave graph is $K_{v_1, v_2-v_1+1}$.  
If $v_1$ is even and $v_2 < v_1-1$, then we use $v_2$ of the 1-factors in a 1-factorization of $K_{v_1}$, with the $v_2$ 1-factors indexed by $X_2$.  The size of the packing is $v_1 v_2/2$, and the leave graph consists of the union of $(v_1-v_2-1)$ 1-factors in $K_{v_1}$; its precise structure is dependent on the choice of the 1-factorization.   

Next, suppose that $v_1$ is odd.  In this case, there exists a near 1-factorization $\mathcal{F}$ of $K_{v_1}$, containing $v_1$ matchings of size $(v_1-1)/2$.  If $v_2 = v_1$, then we take as a block an edge of $K_{v_1}$, together with an element of $X_2$ indexing the near 1-factor of $\mathcal{F}$ to which the edge belongs.  In this way, we achieve a packing of size $D(v_1,2,2)$ whose leave consists of a matching of size $v_1$ between $X_1$ and $X_2$.
If $v_2 > v_1$, then we use the same design, with the excess vertices in $X_2$ not occurring in any block.  Again, the design has size $D(v_1,2,2)$.  
If $v_2 < v_1$, then we take $v_2$ of the near 1-factors in $\mathcal{F}$.  In this case, the design has size $v_2 (v_1-1)/2 = v_2 \lfloor v_1/2\rfloor$.  
\end{proof}

\subsection{$\kk = (1,1,1)$: Latin rectangles}
This is the easiest case.  Let $\vv=(v_1, v_2, v_3)$, where $v_1 \leq v_2 \leq v_3$.  By Corollary~\ref{schonheim_type}, we have that $D(\vv,\kk,2) \leq v_1 v_2$.  The existence of a $v_1 \times v_2$ Latin rectangle guarantees the existence of a generalized packing of size $v_1 v_2$ (cf.\ Proposition~\ref{max_PA_construction}).  The graphical interpretation of Section~\ref{section:graphical} in this case is also straightforward: it is simply a packing of 3-cycles into a complete multipartite graph $K_{v_1,v_2,v_3}$.

%\newpage
\section{The case $t=2$ and $k=4$} \label{section:blocksize4}

The bulk of this paper is devoted to constructing optimal generalized packings where $t=2$ and $k=4$.  There are five cases, corresponding to the five partitions of~4.  Cameron~\cite{cameron} showed that generalized 2-designs can only exist in three of these cases, namely $\kk=(4)$, $(3,1)$ and $(1,1,1,1)$: these correspond to $2$-$(v,4,1)$ designs, Kirkman triple systems and pairs of orthogonal Latin squares, respectively.  As we did for $k=3$, we extend this characterization to generalized packings with $k=4$.  This also requires us to study the cases $\kk=(2,1,1)$ and $\kk=(2,2)$; while no generalized 2-design exists in those cases, there are still (at least when $\kk=(2,1,1)$) plenty of examples of generalized packings which exist in the literature.

\subsection{$\kk = (4)$}
A $2$-$((v),(4),\lambda)$ generalized packing is equivalent to an ordinary $2$-$(v,4,\lambda)$ packing.  For $\lambda=1$, the packing numbers were determined in 1978 by Brouwer \cite{brouwer_packings}.  Let $U(v,4,2)$ denote the Johnson--Sch\"onheim bound (see (Proposition~\ref{johnson_schonheim})).

\begin{theorem}[Brouwer~\cite{brouwer_packings}] \label{thm:quadruples}
The packing number $D(v,4,2) = U(v,4,2)-\epsilon$, where
$$
\epsilon = \left\{
\begin{array}{ll}
1, & \textrm{if } v\equiv 7,\, 10 \pmod{12},\,\, v\neq 10,19 \\
1, & \textrm{if } v\equiv 9,\, 17 \pmod{12} \\
2, & \textrm{if } v=8,\, 10,\, 11 \\
3, & \textrm{if } v=19 \\
0, & \textrm{otherwise.}
\end{array} \right.
$$
\end{theorem}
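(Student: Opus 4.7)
The plan is to establish both the upper bound $D(v,4,2) \leq U(v,4,2) - \epsilon$ and a matching construction in each residue class.

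For the upper bound, I would start with the Johnson--Sch\"onheim bound (Proposition~\ref{johnson_schonheim}) and refine it through an analysis of the leave graph $L$ of a hypothetical maximum packing. Each block uses six edges of $K_v$ and contributes three to the degree of each of its four vertices, so every vertex of $L$ has degree congruent to $v-1 \pmod 3$ and $|E(L)| = \binom{v}{2} - 6|\mathcal{D}|$. For each residue of $v$ modulo~12 I would compute $U(v,4,2)$ explicitly and check whether a leave of the prescribed edge-count and vertex-degree profile can actually be realised; when the counts force a degree-sum or congruence violation, at least one block must be discarded, yielding $\epsilon \geq 1$ in the cases $v \equiv 7, 9, 10, 17 \pmod{12}$. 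For the exceptions $v = 8, 10, 11, 19$ with larger $\epsilon$, a further structural analysis (often case-by-case) is needed to rule out all numerically admissible leaves.

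For the lower bound, the natural starting points are the Steiner quadruple systems $S(2,4,v)$, which by Hanani's theorem exist precisely when $v \equiv 1, 4 \pmod{12}$ and immediately give $D(v,4,2) = U(v,4,2)$ in those cases. For the remaining residues I would use one of three recursive techniques: (a) deleting a carefully chosen point set from an $S(2,4,w)$ with $w$ slightly larger than $v$; (b) group-divisible design constructions that glue together smaller maximum packings along a common partition; or (c) PBD-closure arguments based on the pair $\{1,4\} \pmod{12}$. Each approach requires a finite list of explicit base packings, typically for $v$ up to around $30$, to seed the recursion; these small cases are handled by direct (often computer-assisted) construction.

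The main obstacle will be handling the exceptional small values, particularly $v=19$ where $\epsilon = 3$. The upper bound here cannot be obtained by counting alone, since a leave of the relevant size and degree profile is \emph{numerically} realisable; one must instead argue structurally that any such graph must contain a fifth edge-disjoint $K_4$ that could be adjoined to the packing, contradicting maximality, or else rule out the underlying leave by a detailed isomorph analysis. This structural part, together with the ad hoc base constructions needed to match the bound at $v \in \{8,10,11,19\}$, is what dominates the length of Brouwer's original proof.
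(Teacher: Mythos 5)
This statement is not proved in the paper at all: it is Brouwer's 1979 theorem, imported as a black-box citation (the paper merely remarks that Brouwer ``gave constructions in all cases'' and points to \cite[Table VI.40.23]{handbook}). So there is no in-paper proof to compare yours against, and the relevant question is whether your outline would actually establish the result. Its architecture is the right one and does match how the result is proved in the literature: leave-graph counting (each block removes six edges and three from each incident vertex degree, so $\deg_L(x)\equiv v-1\pmod 3$ and $|E(L)|=\binom{v}{2}-6|\mathcal{D}|$) forces $\epsilon\geq 1$ for $v\equiv 7,9,10,17\pmod{12}$; Hanani's existence theorem for $2$-$(v,4,1)$ designs handles $v\equiv 1,4\pmod{12}$; and recursive/GDD constructions seeded by explicit small packings handle the rest. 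For instance, your counting argument genuinely works for $v\equiv 7\pmod{12}$: there $U$ would leave exactly $3$ edges with all degrees divisible by $3$, which no simple graph realises.

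However, as written this is a research plan, not a proof. Every substantive step is deferred: you do not compute $U(v,4,2)$ in any residue class, do not exhibit a single base packing, do not specify the GDD or PBD-closure machinery that glues them, and explicitly punt on the hardest points ($v=19$ with $\epsilon=3$, and the sporadic cases $v=8,10,11$) to ``detailed isomorph analysis'' and ``computer-assisted construction.'' Those deferred items are precisely what constitutes Brouwer's paper, so the proposal cannot be accepted as a proof of the theorem; it is an accurate table of contents for one. One small terminological slip: a $2$-$(v,4,1)$ design is a Steiner system $S(2,4,v)$, but it is not a ``Steiner quadruple system'' --- that name is reserved for $S(3,4,v)$. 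The existence condition $v\equiv 1,4\pmod{12}$ you quote is nonetheless the correct one for $2$-$(v,4,1)$ designs.
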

Brouwer also gave constructions in all cases: these are listed in~\cite[Table VI.40.23]{handbook}.  We note that $2$-$(v,4,1)$ designs exist exactly when $v\equiv 1,4 \pmod{12}$.

If $\lambda>1$, the packing numbers have also been determined completely: this is due to work of Billington et al.~\cite{billington}, Hartman~\cite{hartman} and Assaf~\cite{assaf}.  (The reader is referred to Mills and Mullin~\cite{mills_mullin} for full details.)

\subsection{$\kk = (3,1)$} \label{section:3_1}

Recall that in the case $\kk=(k-1,1)$, then a $2$-$(\vv,\kk,\lambda)$ generalized packing corresponds to a proper colouring of a $2$-$(v_1,k-1,1)$ packing using $v_2$ colours, where a block $(B,\{i\})$ in the generalized packing tells us that in the $2$-$(v_1,k-1,1)$ packing, block $B$ is assigned colour~$i$.  (We will refer to a generalized packing in this case by listing the colour classes of blocks in the corresponding ordinary packing.)  In this section, we will show that if $\kk=(3,1)$, $t=2$ and $\lambda=1$, then the bound given in Lemma~\ref{(k-1,1) bound} can be achieved in many cases.  This simplifies as follows.

\begin{proposition} \label{prop:31bound}
Suppose $\vv=(v_1,v_2)$ and $\kk=(3,1)$ with $\vv\geq\kk$.  Then
$$
D(\vv,\kk,2) \leq \min\{ D(v_1,3,2), v_2 \lfloor v_1/3\rfloor\}.
$$
\end{proposition}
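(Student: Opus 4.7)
The plan is to recognize that Proposition~\ref{prop:31bound} is nothing more than a specialization of Lemma~\ref{(k-1,1) bound} to the case $k=4$, and in turn that Lemma is itself an instance of the two general bounds Proposition~\ref{indiv_part} and Corollary~\ref{schonheim_type}. So rather than do any new work, I would simply invoke those two earlier results and simplify. There is no real obstacle here; the only thing to verify is that the specialization yields exactly the stated expressions.

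First I would apply Proposition~\ref{indiv_part} with $\vv=(v_1,v_2)$, $\kk=(3,1)$, $t=2$ and $\lambda=1$. The minimum in that proposition is taken over indices $i$ with $k_i\geq t=2$. Since $k_1=3\geq 2$ but $k_2=1<2$, the only surviving index is $i=1$, which yields
\[
D(\vv,\kk,2) \;\leq\; D(v_1,3,2).
\]

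Next I would apply Corollary~\ref{schonheim_type} with $t=2$ and $m=2$. The only $2$-subset of $\{1,2\}$ is $\{1,2\}$ itself, and among the possible orderings of the nested floor expression we pick the one giving the tightest bound. Taking $i_1=2, i_2=1$ (so that the factor $v_2/k_2=v_2$ sits outside the innermost floor), we obtain
\[
D(\vv,\kk,2) \;\leq\; \left\lfloor \frac{v_2}{1}\left\lfloor \frac{v_1}{3}\right\rfloor\right\rfloor \;=\; v_2\left\lfloor \frac{v_1}{3}\right\rfloor,
\]
which is at least as sharp as the alternative ordering $\lfloor v_1 v_2/3\rfloor$ since $v_2$ is a positive integer.

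Taking the minimum of these two upper bounds gives exactly
\[
D(\vv,\kk,2) \;\leq\; \min\bigl\{\,D(v_1,3,2),\; v_2\lfloor v_1/3\rfloor\bigr\},
\]
which is the desired statement. The only step requiring any thought is choosing the ordering in the Johnson--Sch\"onheim-type bound so as to pull the integer $v_2$ outside the floor; this is a minor observation and the rest is bookkeeping.
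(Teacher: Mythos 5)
Your proposal is correct and follows exactly the paper's route: the paper presents Proposition~\ref{prop:31bound} as the $k=4$ specialization of Lemma~\ref{(k-1,1) bound}, which is itself the combination of Proposition~\ref{indiv_part} and Corollary~\ref{schonheim_type} that you carry out. Your extra observation that the ordering with $v_2/k_2$ outermost gives the sharper bound $v_2\lfloor v_1/3\rfloor \leq \lfloor v_1v_2/3\rfloor$ is the right (and only) detail to check.
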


If $\kk=(3,1)$, the only possibility for a generalized 2-design is if $v_1 \equiv 3 \pmod{6}$ and $v_2 = (v_1-1)/2$ (see~\cite[subsection 3.2.2]{cameron}).  Such a design corresponds to a {\em Kirkman triple system} on $v_1$ points, i.e.\ a Steiner triple system with a resolution into parallel classes, and denoted $\kts(v_1)$.  These originate in a problem of Kirkman from 1850~\cite{kirkman}, often known as {\em Kirkman's schoolgirls problem}, for the case $v_1=15$.  
Kirkman solved the problem for $v_1=15$ himself the following year~\cite{kirkman1851}, but the existence of Kirkman triple systems for all values of $v_1 \equiv 3 \pmod{6}$ was not settled until the late 1960s, when it was shown by Ray-Chaudhuri and Wilson~\cite{RC_wilson}.  (A survey on Kirkman triple systems and related designs can be found in Stinson~\cite{stinson_kirkman}.)

More generally, if we have a block colouring of $2$-$(v_1,3,1)$ packing with $s$ colours, its {\em colour type} is the sequence $(m_1,\ldots,m_s)$, where $m_i$ is the number of blocks assigned colour~$i$.  We may also use exponential notation: colour type $w_1^{\alpha_1} w_2^{\alpha_2} \cdots w_n^{\alpha_n}$ means that there are $\alpha_i$ colour classes of size $w_i$ for each $i=1,2,\ldots,n$, and so $\sum \alpha_i =s$.

Motivated by an application to unipolar communications, Colbourn and Zhao~\cite{colbourn_zhao} introduced the notion of a {\em Kirkman signal set}, which is a $2$-$(v_1,3,1)$ packing partitioned into $s$ colour classes of size $m$ (that is, it has colour type $m^s$), where $s$ is as large as possible.  If $m=\lfloor v_1/3 \rfloor$ (and so is also as large as possible), we have a {\em maximum Kirkman signal set}.  More recently, Colbourn, Horsley and Wang~\cite{colbourn_horsley_wang} introduced a {\em strong Kirkman signal set} (denoted $\skss(v_1)$) to be a block colouring of a maximum $2$-$(v_1,3,1)$ packing of colour type $m^s r^1$, where $D(v_1,3,1)=sm+r$ and $r<m$.  That is, an $\skss(v_1)$ has $s$ colour classes of size $m=\lfloor v_1/3 \rfloor$, and the remaining $r$ blocks all receive the same colour.  For example, a $\kts(v_1)$ is an $\skss(v_1)$ when $v_1 \equiv 3 \pmod 6$; in this case, $r=0$.

For $v_1 \equiv 0,1,2 \pmod 6$, various other known objects arise as $\skss(v_1)$.  When $v_1 \equiv 2 \pmod 6$, an $\skss(v_1)$ can be obtained by deleting a point from a $\kts(v_1+1)$; here $r=0$ also.  When $v_1 \equiv 1 \pmod 6$, an $\skss(v_1)$ is a {\em Hanani triple system}, namely an $\sts(v_1)$ whose triples may be partitioned into $s=(v_1-1)/2$ almost parallel classes and one partial parallel class of size $r=(v_1-1)/6$.  These were introduced in a 1993 paper of Vanstone et al.~\cite{vanstone_et_al}, who showed that such a system exists if and only if $v \equiv 1 \pmod{6}$ and $v_1 \notin \{7, 13\}$.  When $v_1 \equiv 0 \pmod 6$, an $\skss(v_1)$ is a {\em nearly Kirkman triple system}, which is a colouring of a maximum packing of triples on $v_1$ points, with $s=(v_1-2)/2$ colour classes of size $m=v_1/3$; once again we have $r=0$.  These were introduced in 1974 by Kotzig and Rosa~\cite{kotzig_rosa}.  In 1977, it was shown by Baker and Wilson~\cite{baker_wilson} that there exists such a system if and only if $v_1 \equiv 0 \pmod{6}$ and $v_1 \geq 18$, with three possible exceptions.  Two of the exceptional cases were later solved by Brouwer~\cite{brouwer_kirkman}, and the remaining case by Rees and Stinson~\cite{rees_stinson}.

The remaining possibilities, namely $v_1 \equiv 4,5 \pmod 6$, were dealt with by Colbourn, Horsley and Wang~\cite{colbourn_horsley_wang} (see also~\cite{CHW_trails}).\footnote{Some of the results of Colbourn, Horsley and Wang~\cite{CHW_trails,colbourn_horsley_wang} for $v_1 \equiv 5 \pmod 6$ were independently obtained by the authors of the present paper, using exactly the same approach; however, their results were submitted for publication before the present authors became aware of their work.}  Combined with the results of the previous paragraphs, they proved the following.

\begin{theorem}[Colbourn, Horsley and Wang%
~{\cite[Theorem 2.4]{colbourn_horsley_wang}}%
] \label{thm:CHW}
Suppose $v_1\geq 3$.  Then there exists a strong Kirkman signal set $\skss(v_1)$ unless $v_1 \in \{6,7,11,12,13\}$.\footnote{The value $v_1=11$ is missing from the statement of~\cite[Theorem 2.4]{colbourn_horsley_wang}; however, it is addressed elsewhere in their paper.}
\end{theorem}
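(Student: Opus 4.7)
The plan is to split the argument by the residue of $v_1$ modulo~6, using the fact that an $\skss(v_1)$ is a maximum $2$-$(v_1,3,1)$ packing whose blocks are partitioned into $s$ classes of size $m = \lfloor v_1/3 \rfloor$ (each necessarily a parallel or almost parallel class on the points it covers) plus one short class of size $r$, where $D(v_1,3,2) = sm + r$ and $0 \leq r < m$. In each residue class the target values of $s$, $m$ and $r$ are pinned down by the packing number in Table~\ref{table:triples}, so the real work is to exhibit a colouring of that exact type.

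First I would dispose of $v_1 \equiv 0,1,2,3 \pmod 6$ by quoting existing families of designs. For $v_1 \equiv 3 \pmod 6$, any $\kts(v_1)$ is an $\skss(v_1)$ with $r=0$, and these exist for all such $v_1$ by Ray-Chaudhuri--Wilson~\cite{RC_wilson}. For $v_1 \equiv 2 \pmod 6$, deleting one point from a $\kts(v_1+1)$ and its $(v_1-1)/2$ parallel classes produces a packing with the required structure (again $r=0$). For $v_1 \equiv 0 \pmod 6$, a nearly Kirkman triple system, by~\cite{baker_wilson,brouwer_kirkman,rees_stinson}, gives the $\skss$ exactly when $v_1 \geq 18$, so $v_1 = 6,12$ are genuine exceptions. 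For $v_1 \equiv 1 \pmod 6$, a Hanani triple system, whose almost parallel class decomposition matches the definition of an $\skss(v_1)$ with $r=(v_1-1)/6$, exists whenever $v_1 \neq 7, 13$ by Vanstone et al.~\cite{vanstone_et_al}.

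The principal obstacle, and the only genuinely new step, is the construction of $\skss(v_1)$ for $v_1 \equiv 4,5 \pmod 6$. In these residue classes no off-the-shelf design family applies: the leave of a maximum packing is a $K_{1,3}$ plus a perfect matching (for $v_1 \equiv 4$) or a $4$-cycle (for $v_1 \equiv 5$), and we must chop up the remaining triples into classes of exactly $m = \lfloor v_1/3 \rfloor$ triples each, plus a short leftover class. My approach here would be recursive: use resolvable group-divisible designs of block size~$3$ together with small ``ingredient'' signal sets on the groups (or on a transversal) to inflate existing $\skss$s to larger orders, in the spirit of the PBD-closure arguments used to settle nearly Kirkman triple systems. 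The Colbourn--Horsley--Wang paper carries out exactly such a strategy (with the detour through trails in~\cite{CHW_trails}), and one may simply invoke their constructions; the small base cases must then be verified by direct construction or computer search.

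Finally I would check that the listed exceptions $\{6,7,11,12,13\}$ really are unavoidable: in each case a short counting argument comparing $D(v_1,3,2)$ to the forced value of $sm+r$, coupled with a quick exhaustive search over colourings of the (few) maximum packings, rules out the existence of an $\skss$. The hardest part overall is the $v_1 \equiv 5 \pmod 6$ construction near the boundary, specifically separating the genuinely impossible value $v_1 = 11$ from the next case $v_1 = 17$, which must be handled explicitly before the recursion takes over.
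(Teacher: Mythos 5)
Your proposal follows essentially the same route as the paper, which itself does not reprove this result but assembles it from the literature: Kirkman, nearly Kirkman and Hanani triple systems (and point-deleted $\kts(v_1+1)$'s) for $v_1 \equiv 0,1,2,3 \pmod 6$, with the residues $4,5 \pmod 6$ and the verification of the exceptional orders deferred to the constructions of Colbourn, Horsley and Wang, exactly as you do. The only slips are cosmetic: a $\kts(v_1+1)$ has $v_1/2$ parallel classes (not $(v_1-1)/2$), and for $v_1\equiv 2\pmod 6$ the colour classes of size $\lfloor v_1/3\rfloor$ miss two points rather than being almost parallel classes; neither affects the argument.
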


The connection between strong Kirkman signal sets and generalized packings is established in the result below.

\begin{theorem} \label{thm:SKSS}
Suppose $\vv=(v_1,v_2)$ and $\kk=(3,1)$.  If there exists a strong Kirkman signal set $\skss(v_1)$, then there exists a $2$-$(\vv,\kk,1)$ generalized packing meeting the bound of Proposition~\ref{prop:31bound}.
\end{theorem}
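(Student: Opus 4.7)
The plan is to translate the $\skss(v_1)$ directly into the required generalized packing via Proposition~\ref{prop:resolvable}. That proposition identifies a $2$-$(\vv,\kk,1)$ generalized packing with $\kk=(3,1)$ with a $2$-$(v_1,3,1)$ packing whose blocks are partitioned into at most $v_2$ partial parallel classes, where the number of blocks of the generalized packing equals the total number of triples in the underlying packing. An $\skss(v_1)$ supplies exactly such a partition of a \emph{maximum} $2$-$(v_1,3,1)$ packing: it consists of $s$ partial parallel classes of size $m=\lfloor v_1/3\rfloor$ together with (when $r>0$) one further partial parallel class of size $r<m$, the total being $sm+r=D(v_1,3,2)$.

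From there, a short case analysis on $v_2$ will finish the proof. Let $c$ denote the number of colour classes of the $\skss$, so $c=s$ if $r=0$ and $c=s+1$ otherwise. If $v_2\geq c$, I would label the colour classes by distinct elements of $X_2$, producing a generalized packing of size $sm+r=D(v_1,3,2)$. If instead $v_2<c$, then necessarily $v_2\leq s$, and I would simply retain $v_2$ of the size-$m$ classes (discarding the rest, including the small class if present), producing a generalized packing of size $v_2 m = v_2\lfloor v_1/3\rfloor$.

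The only remaining step is to verify that the construction attains the minimum of the two terms in the bound of Proposition~\ref{prop:31bound}, not merely one of them. In the first case, using $r<m$, one obtains $v_2 m \geq cm \geq sm+r$; in the second case, $v_2 m \leq sm \leq sm+r$. Both inequalities follow immediately from $r<m$ together with the relevant case hypothesis, so no substantive obstacle arises: the theorem is essentially a repackaging of the $\skss$ structure in the language of generalized packings, and the only thing that could ever fail to meet the bound is the supply of colour classes against the size of $X_2$, which the case split handles.
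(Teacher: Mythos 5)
Your proposal is correct and follows essentially the same route as the paper: translate the $\skss(v_1)$ into a block-coloured packing via Proposition~\ref{prop:resolvable}, then split on whether $v_2$ is at most $s$ (take $v_2$ of the size-$m$ classes) or exceeds $s$ (take the whole signal set). The only difference is that you explicitly verify, using $r<m$, that the construction attains the \emph{minimum} of the two terms in Proposition~\ref{prop:31bound} rather than just one of them --- a check the paper leaves implicit --- which is a welcome addition but not a different argument.
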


\begin{proof}
First, we recall that the bound of Proposition~\ref{prop:31bound} is
$$
D(\vv, \kk, 2) \leq \min\left\{ D(v_1, 3, 2), v_2 \left\lfloor \frac{v_1}{3} \right\rfloor \right\}.
$$
Now suppose there exists an $\skss(v_1)$, which has colour type $m^s r^1$, where $m=\lfloor v_1/3 \rfloor$ and $D(v_1,3,2)=ms+r$.  If $v_2 \leq s$, we take $v_2$ of the colour classes of size $m$, which yield a generalized packing of size $v_2 \lfloor v_1/3 \rfloor$.  If $v_2 > s$, then the $\skss(v_1)$ itself is a $2$-$(\vv,\kk,1)$ generalized packing of size $D(v_1,3,2)$.
\end{proof}

Thus Theorem~\ref{thm:CHW} establishes the existence of generalized packings for $\kk=(3,1)$ unless $v_1 \in \{6,7,11,12,13\}$.  
In~\cite[{\S}3.1]{colbourn_horsley_wang}, Colbourn, Horsley and Wang determine all the possible colour types for $2$-$(v_1,3,1)$ packings where $v_1\leq 13$.  From this, it is possible to determine the value of $D(\vv,\kk,2)$ for $\kk=(3,1)$ in the five exceptional cases.\footnote{Solutions in all but three of these exceptional cases, namely $\vv=(11,6)$, $(12,6)$ and $(13,6)$, were also determined independently by the present authors.}  We now consider each of these exceptions, beginning with $v_1=6$.

\begin{lemma}
Let $\vv=(6,v_2)$ and $\kk=(3,1)$.  Then
$$
D(\vv, \kk, 2) = 
\left\{
\begin{array}{ll}
2, & \mbox{if } v_2=1, 2 \\
3, & \mbox{if } v_2=3 \\
4, & \mbox{if } v_2 \geq 4.
\end{array}
\right.
$$
\end{lemma}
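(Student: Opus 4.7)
The plan is to combine the bound of Proposition~\ref{prop:31bound} with a small structural fact about $2$-$(6,3,1)$ packings. From Table~\ref{table:triples} we read off $D(6,3,2) = 4$ (since $6 \equiv 0 \pmod 6$), so Proposition~\ref{prop:31bound} gives $D(\vv,\kk,2) \leq \min\{4, 2v_2\}$. This already yields the claimed upper bound for $v_2=1$ (namely $2$) and for $v_2 \geq 4$ (namely $4$), but it is too weak for $v_2 \in \{2,3\}$, so a refinement will be needed for those two cases.

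The key structural lemma I will prove is: a $2$-$(6,3,1)$ packing that contains two disjoint blocks admits no third block. Indeed, two disjoint triples partition the six points, so any third triple has its three points spread across these two halves, and by pigeonhole at least two of them lie in the same half, repeating a pair already covered. Equivalently, any $2$-$(6,3,1)$ packing with three or more blocks contains no pair of disjoint blocks. Applying this for $v_2=2$: three or more blocks would force (by pigeonhole on the two colour classes) some two blocks to share a colour, hence to be disjoint, contradicting the lemma; so at most $2$ blocks are possible. Applying it for $v_2=3$: four blocks would force two blocks to share a colour class, again giving a forbidden pair of disjoint blocks; so at most $3$.

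For the matching constructions, I will use the maximum $2$-$(6,3,1)$ packing $\{1,2,3\},\{1,4,5\},\{2,4,6\},\{3,5,6\}$ (which one checks directly covers each pair at most once). For $v_2 \geq 4$, assign these four triples to four distinct colour classes; for $v_2=3$, take the first three triples in three distinct colour classes; for $v_2=2$, take $\{1,2,3\}$ in one colour and $\{1,4,5\}$ in another; and for $v_2=1$, use the parallel class $\{1,2,3\},\{4,5,6\}$ as a single colour class. The main obstacle is the structural lemma about disjoint triples; once that is in hand, the remainder of the argument reduces to pigeonhole on the colour classes together with exhibiting the four small explicit examples above.
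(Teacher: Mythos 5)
Your proof is correct and complete. The paper explicitly leaves this lemma as an exercise, so there is no proof to compare against; your argument supplies exactly what is needed. The structural observation that two disjoint triples partition the six points, so that any $2$-$(6,3,1)$ packing with three or more blocks can contain no disjoint pair, combined with pigeonhole on the colour classes (via Proposition~\ref{prop:resolvable}) and the explicit maximum packing $\{1,2,3\},\{1,4,5\},\{2,4,6\},\{3,5,6\}$ with its subpackings, correctly establishes both the refined upper bounds for $v_2\in\{2,3\}$ (where the bound of Proposition~\ref{prop:31bound} alone gives only $4$) and the matching constructions in all four cases.
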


This is straightforward to show, so we leave the proof as an exercise.  Almost as straightforward is the case $v_1=7$, which we do next.

\begin{lemma}
Let $\vv=(7,v_2)$ and $\kk=(3,1)$.  Then 
$$
D(\vv,\kk,2) = \left\{ 
\begin{array}{ll}
2, & \mbox{if } v_2=1 \\
3, & \mbox{if } v_2=2 \\
4, & \mbox{if } v_2=3 \\
5, & \mbox{if } v_2=4, 5 \\
6, & \mbox{if } v_2=6 \\
7, & \mbox{if } v_2 \geq 7.
\end{array}
\right.
$$ 
\end{lemma}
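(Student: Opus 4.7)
The plan is to combine Proposition~\ref{prop:31bound} with a short structural analysis of parallel classes in a $2$-$(7,3,1)$ packing, and then exhibit explicit colored packings attaining the resulting bounds. Since $D(7,3,2)=7$ (the Fano plane), Proposition~\ref{prop:31bound} immediately yields $D(\vv,\kk,2) \leq \min\{7,\, 2v_2\}$, which already gives the claimed value when $v_2 \geq 7$. For smaller $v_2$ this bound is not tight and must be refined.

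The key structural lemma I would prove is the following: in any $2$-$(7,3,1)$ packing, at most one block-color class can have size $2$, and if such a parallel class is present then every other block of the packing must contain the point that the parallel class omits. The argument is a short direct check: by symmetry take the parallel class of size $2$ to be $\{1,2,3\},\{4,5,6\}$, which uses every pair inside $\{1,2,3\}$ and inside $\{4,5,6\}$; a quick case split on $|T\cap\{1,2,3\}|$ shows that every $3$-subset $T\subseteq\{1,\ldots,6\}$ repeats one of these pairs, so any further block must contain $7$. Two blocks both containing $7$ share that point and so cannot be in a common color class; and since there are only $6$ pairs through $7$ and each block through $7$ uses two of them, at most $3$ such blocks can lie in the packing.

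From this lemma, a proper $v_2$-coloring of a $2$-$(7,3,1)$ packing with $N$ blocks falls into one of two cases: either every color class is a singleton, giving $N \leq \min\{v_2,7\}$, or exactly one color class has size $2$ and the rest are singletons, giving $N \leq 2 + \min\{v_2-1,3\} = \min\{v_2+1, 5\}$. Taking the maximum of the two cases for each $v_2\in\{1,2,3,4,5,6\}$ produces the stated upper bounds $2,3,4,5,5,6$, and the $v_2\geq 7$ case was already handled.

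For matching constructions, I would take $\{1,2,3\},\{4,5,6\}$ in color $1$ and then add blocks $\{1,4,7\},\{2,5,7\},\{3,6,7\}$ in separate colors to realize $N=2,3,4,5$ for $v_2=1,2,3,4$ respectively (the $v_2=5$ case reuses the $v_2=4$ design with one unused color). For $v_2=6$ and $v_2\geq 7$, I would take any $6$ or all $7$ lines of the Fano plane, each in its own color class (this must be singleton-per-color, since in the Fano plane every two lines meet). The only nontrivial ingredient is the structural lemma; once it is in hand the rest is routine, and I expect the author's solution to proceed along essentially the same lines as the $v_1=6$ case that was left as an exercise.
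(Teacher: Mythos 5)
Your proposal is correct and follows essentially the same route as the paper: the paper also observes that the Fano plane (the unique maximum packing) has chromatic index $7$ since any two of its lines meet, and that there can be at most one colour class of size $2$, with any packing containing such a class having at most $5$ blocks. You have simply supplied the details the paper leaves as "it can be shown" — the forced point $7$ in every further block, the count of at most three blocks through it, and the explicit attaining colourings.
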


\begin{proof} The unique maximum $2$-$(7,3,1)$ packing is, of course, the Fano plane, which has chromatic index~7: since any two blocks intersect, each must receive its own colour.  So if $v_2\geq 7$, we are done.  Also, if $v_2=6$, we obtain a maximum generalized packing by taking six of the blocks from the Fano plane.

Now, with seven points, it can be shown that there can be at most one colour class of size 2, and the maximum size of a packing containing such a colour class is~5.  The remaining results follow from this observation.
\end{proof}

The remaining exceptions are slightly more involved.  We continue with the case $v_1=11$.

\begin{lemma}
Let $\vv=(11,v_2)$ and $\kk=(3,1)$.  Then
$$
D(\vv,\kk,2) = 
\left\{
\begin{array}{ll}
3v_2, & \mbox{if } v_2 \leq 5 \\
16 ,  & \mbox{if } v_2=6 \\
17,   & \mbox{if } v_2 \geq 7.
\end{array}
\right.
$$
Thus, the packing number $D(\vv,\kk,2)$ meets the bound of Proposition~\ref{prop:31bound}, except when $v_2 = 6$.
\end{lemma}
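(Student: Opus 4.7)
My plan is to combine the upper bound from Proposition~\ref{prop:31bound} with the non-existence of $\skss(11)$ (Theorem~\ref{thm:CHW}) and the classification of colour types for $2$-$(11,3,1)$ packings recorded in~\cite[{\S}3.1]{colbourn_horsley_wang}. Since $11 \equiv 5 \pmod{6}$, Table~\ref{table:triples} gives $D(11,3,2)=17$, and $\lfloor 11/3\rfloor=3$, so Proposition~\ref{prop:31bound} yields $D(\vv,\kk,2) \leq \min\{17,3v_2\}$; this is $3v_2$ for $v_2\leq 5$ and $17$ for $v_2\geq 6$. The task is to decide when these bounds are attained, and to sharpen the bound when they are not.

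For $v_2 \leq 5$, I would exhibit five pairwise edge-disjoint partial parallel classes of size $3$ on $11$ points (available from the enumeration in~\cite[{\S}3.1]{colbourn_horsley_wang}, or constructed directly); selecting any $v_2$ of these classes then yields a $2$-$(\vv,\kk,1)$ generalized packing of size $3v_2$. For $v_2\geq 7$, it suffices to produce a maximum $17$-block $2$-$(11,3,1)$ packing whose blocks partition into seven partial parallel classes (for example, of colour type $3^5 1^2$), again obtainable from the same source; Proposition~\ref{increase_v} then reduces the case $v_2\geq 7$ to $v_2=7$.

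The delicate case is $v_2 = 6$, where the bound from Proposition~\ref{prop:31bound} only gives $D(\vv,\kk,2) \leq 17$ and the main work is to reduce this to $16$; I expect this step to be the main obstacle. Suppose a $17$-block $2$-$(11,3,1)$ packing is properly block-coloured with at most $6$ colours. Each colour class has size at most $\lfloor 11/3\rfloor = 3$, so writing $a, b, c$ for the number of colour classes of sizes $1, 2, 3$, respectively, we need $a+b+c \leq 6$ and $a + 2b + 3c = 17$. A quick check shows the unique solution with $a,b,c \geq 0$ is $(a,b,c)=(0,1,5)$, forcing colour type $3^5 2^1$. Such a colouring is precisely a strong Kirkman signal set $\skss(11)$, which does not exist by Theorem~\ref{thm:CHW}; hence $D(\vv,\kk,2)\leq 16$ when $v_2=6$. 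To finish, I would exhibit a $16$-block $2$-$(11,3,1)$ packing of colour type $3^4 2^2$, available from the CHW classification, giving a generalized packing of size $16$ that matches the sharpened bound.
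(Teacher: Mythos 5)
Your proposal is correct, and the constructions (take colour classes from a $17$-block packing of colour type $3^5 1^2$) match what the paper does; the interesting divergence is in the upper bound for $v_2=6$. The paper disposes of that case by citing Colbourn and Rosa's result that a maximum $2$-$(11,3,1)$ packing has chromatic index~$7$, so no $6$-block-colouring of $17$ blocks can exist. You instead derive this from results already quoted in the paper: since each colour class has at most $\lfloor 11/3\rfloor=3$ blocks, a $6$-colouring of $17$ blocks is forced by your counting argument to have colour type $3^5 2^1$, which is exactly the definition of an $\skss(11)$, and Theorem~\ref{thm:CHW} (together with the surrounding discussion, which treats $v_1=11$ as a genuine non-existence case rather than merely an unresolved one) rules this out. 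This buys a more self-contained argument that does not need the separate chromatic-index reference, at the cost of leaning on the non-existence direction of the $\skss$ classification. Two small remarks: for the $16$-block construction at $v_2=6$ you do not need to hunt for a $3^4 2^2$ packing in the CHW data --- simply discard one singleton colour class from the $3^5 1^2$ example you already use for $v_2\geq 7$, leaving $5+1=6$ classes and $16$ blocks; and for $v_2\geq 7$ the reduction via Proposition~\ref{increase_v} is unnecessary, since unused colours in $X_2$ are harmless. Neither affects correctness.
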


\begin{proof}
Note that Proposition~\ref{prop:31bound} gives an upper bound of $D(\vv,\kk,2) \leq \min\{17,3v_2\}$.  It was shown by Colbourn and Rosa~\cite{colbourn_rosa85} that a maximum $2$-$(11,3,1)$ packing has chromatic index~7, so if $v_2\geq 7$, we are done.  This also implies that for $v_2=6$, the maximum size of a generalized packing is at most~16.  Furthermore, Colbourn, Horsley and Wang obtained a colouring of the 17 blocks with colour type $3^5 1^2$ (an example is given in Appendix~\ref{section:31appendix}, Example~\ref{example:11pts}); the remaining results follow from the existence of this.
\end{proof}

\begin{lemma}
Let $\vv=(12,v_2)$ and $\kk=(3,1)$.  Then
$$
D(\vv,\kk,2) = 
\left\{
\begin{array}{ll}
4v_2, & \mbox{if } v_2 \leq 4 \\
19 ,  & \mbox{if } v_2=5,6 \\
20,   & \mbox{if } v_2 \geq 7.
\end{array}
\right.
$$
Thus, the packing number $D(\vv,\kk,2)$ meets the bound of Proposition~\ref{prop:31bound}, except when $v_2 = 5$ or~6.
\end{lemma}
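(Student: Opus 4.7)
The plan is to parallel the analysis of the preceding $v_1=11$ case. Since $12\equiv 0\pmod 6$, Table~\ref{table:triples} gives $D(12,3,2)=20$ (with leave a 1-factor), so Proposition~\ref{prop:31bound} yields
\[ D(\vv,\kk,2) \le \min\{20,\, 4v_2\}, \]
which matches the claimed value in all cases except $v_2\in\{5,6\}$, where the bound is $20$ but the lemma asserts only $19$. The proof splits into three ranges of $v_2$, each requiring a matching construction, and one sharpening of the upper bound.

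For $v_2\le 4$, I would exhibit four pairwise edge-disjoint parallel classes of triples on the 12-set (each a partition into four triples), and take any $v_2$ of them; this produces a generalized packing of size $4v_2$. Such a partial nearly Kirkman triple system is easy to build directly (for instance on $\mathbb{Z}_{12}$ by a starter-adder construction), and it cannot be extended to five parallel classes because $\nkts(12)$ does not exist (Baker--Wilson). For $v_2\ge 7$, I would invoke the Colbourn--Horsley--Wang enumeration of colour types for $v_1=12$ to obtain a colouring of a maximum $2$-$(12,3,1)$ packing using exactly seven colour classes (for example of type $4^3 2^4$ or $4^4 2^1 1^2$, both summing to $20$ with class sizes at most $4$); deferring the explicit example to the appendix parallels the presentation of the $v_1=11$ case. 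For $v_2\in\{5,6\}$, the construction of a generalized packing with $19$ blocks is achieved by augmenting the four parallel classes from the first step with a fifth colour class consisting of three triples edge-disjoint from all previously used pairs, giving colour type $4^4 3^1$ and exactly $19$ blocks.

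The crux, and main obstacle, is the upper bound $D(\vv,\kk,2)\le 19$ when $v_2=6$. This amounts to proving that no maximum $2$-$(12,3,1)$ packing admits a proper block colouring using at most six colour classes of size at most four. The only multisets of class sizes compatible with six classes summing to $20$ and no class exceeding $4$ are $4^5$, $4^4 3^1 1^1$, $4^4 2^2$, $4^3 3^2 2^1$, and $4^2 3^4$. The type $4^5$ is immediately ruled out by the nonexistence of $\nkts(12)$, and the remaining four types must each be shown impossible; this is exactly the content of the Colbourn--Horsley--Wang classification of colour types of $2$-$(v_1,3,1)$ packings for $v_1\le 13$ cited in~\cite[{\S}3.1]{colbourn_horsley_wang}, which is the essential technical input. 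Once these types are excluded, the bound $D\le 19$ for $v_2=6$ follows, and the same bound for $v_2=5$ is immediate since any $5$-colouring is in particular a $6$-colouring.
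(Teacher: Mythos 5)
Your proposal is correct and follows essentially the same route as the paper: the upper bound $\min\{20,4v_2\}$ from Proposition~\ref{prop:31bound}, the nonexistence of an $\nkts(12)$ together with the Colbourn--Horsley--Wang colour-type enumeration (equivalently, chromatic index~7 of a maximum $2$-$(12,3,1)$ packing) to push the bound down to~19 for $v_2\in\{5,6\}$, and explicit colourings of types $4^4 3^1$ (19 blocks, 5 classes, used for $v_2\leq 6$) and a 7-class colouring of 20 blocks (the paper uses type $3^6 2^1$) for $v_2\geq 7$. The only differences are cosmetic, e.g.\ which realizable 7-class colour type is exhibited and the phrasing ``six classes'' where ``at most six'' is meant in your list of candidate types.
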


\begin{proof}
Proposition~\ref{prop:31bound} gives the upper bound $D(\vv,\kk,2) \leq \min \{20,4v_2\}$.  Now, as there is no $\nkts(12)$, there cannot exist a 5-block colouring of a maximum $2$-$(12,3,1)$ packing.  Furthermore, the enumeration of colour types by Colbourn, Horsley and Wang shows that the chromatic index is in fact~7.  Thus the maximum number of blocks in a generalized packing for $v_2=5,6$ is at most~19.  

Now, there exists a colouring of 20 blocks with colour type $3^6 2^1$ and a colouring of 19 blocks with colour type $4^4 3^1$, given in Appendix~\ref{section:31appendix}, Examples~\ref{example:20blocks} and~\ref{example:19blocks} respectively; these were obtained independently by the present authors.  For $v_2\geq 7$, the result follows from the existence of the former, and for $v_2\leq 6$, it follows from the latter.
\end{proof}

\begin{lemma}
Let $\vv=(13,v_2)$ and $\kk=(3,1)$.  Then
$$
D(\vv,\kk,2) = 
\left\{
\begin{array}{ll}
4v_2, & \mbox{if } v_2 \leq 6 \\
25,   & \mbox{if } v_2=7 \\
26,   & \mbox{if } v_2 \geq 8.
\end{array}
\right.
$$
Thus, the packing number $D(\vv,\kk,2)$ meets the bound of Proposition~\ref{prop:31bound}, except when $v_2 = 7$.
\end{lemma}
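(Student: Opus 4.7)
The plan is to combine Proposition~\ref{prop:31bound} with the value $D(13,3,2)=26$ from Table~\ref{table:triples} to obtain the upper bound $D(\vv,\kk,2)\leq\min\{26,4v_2\}$, and then to exhibit matching constructions in all cases except $v_2=7$, where a separate argument is needed to sharpen the bound from $26$ to $25$.

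For the sharper upper bound at $v_2=7$, I would first observe that any $2$-$(13,3,1)$ packing of size $26$ must be a Steiner triple system $\sts(13)$, so it suffices to show that no $\sts(13)$ admits a proper block colouring with $7$ colours. If such a colouring existed, then since each point lies in exactly $6$ triples and triples through a common point must receive distinct colours, each point would be absent from exactly one colour class. A colour class of size $c$ contributes $13-3c$ point-absences; summing over the seven classes gives $7\cdot 13-3\cdot 26=13$ total absences, matching the $13$ points each absent once. Combining $\sum c_i=26$ and $c_i\leq 4$ with the absence identity $b+2c+3d=2$ (where $4^a 3^b 2^c 1^d$ denotes the colour type) forces the type to be either $4^6 2^1$ or $4^5 3^2$. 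The first type is precisely a Hanani triple system of order $13$, which does not exist by~\cite{vanstone_et_al}. The second type is excluded by the enumeration of colour types of maximum $2$-$(13,3,1)$ packings carried out in~\cite[\S3.1]{colbourn_horsley_wang}.

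For the matching lower-bound constructions, I would rely on explicit colourings of $\sts(13)$ drawn from~\cite{colbourn_horsley_wang}. An $8$-colouring of type $4^6 1^2$ handles several cases at once: taking all eight classes gives a $2$-$(\vv,\kk,1)$ packing of size $26$ for every $v_2\geq 8$, and taking any $v_2$ of the six size-$4$ classes gives one of size $4v_2$ for $1\leq v_2\leq 6$. For $v_2=7$, a $7$-colouring of $25$ of the $26$ triples of an $\sts(13)$ (of a type such as $4^4 3^3$ or $4^5 3^1 2^1$) gives $D(\vv,\kk,2)=25$; such a colouring is again extracted from~\cite{colbourn_horsley_wang}, or constructed by removing one well-chosen triple from an $\sts(13)$ and recolouring.

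The main obstacle is excluding the colour type $4^5 3^2$ in the $v_2=7$ upper bound: the counting constraints alone do not rule it out, and structurally the two size-$3$ colour classes would have to cover disjoint $4$-subsets of the point set in a very constrained way (each must meet the $4$-set missed by the other exactly once, with the remaining $5$ points forming the union of the points missed by the five size-$4$ classes). I would handle this by citing the enumeration in~\cite{colbourn_horsley_wang}; failing that, a direct case analysis on the two isomorphism classes of $\sts(13)$ would give an explicit verification.
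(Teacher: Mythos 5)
Your overall skeleton matches the paper's: the upper bound $\min\{26,4v_2\}$ from Proposition~\ref{prop:31bound}, constructions drawn from explicit block colourings for the lower bounds, and a separate argument to push the bound down to $25$ when $v_2=7$. Where you genuinely diverge is in that last step. The paper simply quotes the fact that both $\sts(13)$s have chromatic index~$8$ (Mathon, Phelps and Rosa~\cite{mathon_phelps_rosa}), which kills any $7$-colouring at once. You instead run a counting argument: with $7$ classes summing to $26$ and each class of size at most $\lfloor 13/3\rfloor=4$, the identity $b+2c+3d=2$ forces colour type $4^6 2^1$ or $4^5 3^2$; the first is a Hanani triple system of order $13$, which fails to exist by~\cite{vanstone_et_al}. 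This is a nice, more self-contained reduction (and your point-absence computation, while only a consistency check rather than new information, is harmless). What it buys is independence from the chromatic-index citation; what it costs is the residual case $4^5 3^2$, which you correctly identify as the main obstacle but do not actually close --- you fall back on the enumeration in~\cite{colbourn_horsley_wang} or an unexecuted case analysis. Since the paper's own proof is also citation-based at this point, I would not call that a fatal gap, but as written your argument is not complete without one of those two inputs.

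One detail to repair on the construction side: you assert the existence of an $\sts(13)$ with colour type $4^6 1^2$ and hang both the $v_2\geq 8$ and $v_2\leq 6$ cases on it. That existence claim is not verified anywhere (note that the two singleton triples would be forced to intersect, else merging them yields a nonexistent $\hts(13)$), and it is not what the sources actually supply. The documented objects are an $8$-colouring of an $\sts(13)$ of type $4^4 3^3 1^1$ (Example~\ref{example:sts13}) --- which handles $v_2\geq 8$ with all $26$ blocks and, after deleting the singleton class, gives the $25$-block, $7$-class packing for $v_2=7$ exactly as you describe --- and a \emph{separate} $24$-block packing of type $4^6$ (Example~\ref{example:13pts}), from which taking $v_2$ classes gives $4v_2$ blocks for $v_2\leq 6$. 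Substituting these for your hypothetical $4^6 1^2$ colouring makes the constructive half of your argument coincide with the paper's.
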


\begin{proof}
This time, the upper bound of Proposition~\ref{prop:31bound} works out as $D(\vv,\kk,2) \leq \min\{ 26, 4v_2\}$.  Now, a maximum $2$-$(13,3,1)$ packing is a Steiner triple system: there are exactly two $\sts(13)$, and each has chromatic index~8 (see~\cite{mathon_phelps_rosa}).  Thus for $v_2 \geq 8$, we are done, while for $v_2=7$, the best we can hope for is 25~blocks.  Fortunately, there is an $\sts(13)$ with colour type $4^4 3^3 1^1$ (see Appendix~\ref{section:31appendix}, Example~\ref{example:sts13}), and deleting the colour class of size~1 yields a maximum generalized packing if $v_2=7$.

For $v_2\leq 6$, there is a packing with colour type~$4^6$ (see Appendix~\ref{section:31appendix}, Example~\ref{example:13pts}) obtained by Colbourn, Horsley and Wang: taking $v_2$ colour classes from this gives maximum generalized packings.
\end{proof}

We pull together all the results above for $\kk=(3,1)$ in the following theorem.

\begin{theorem} \label{thm:3_1_summary}
Suppose $\vv=(v_1,v_2)$ and $\kk=(3,1)$, where $v_1\geq 3$.  Then there exists a $2$-$(\vv,\kk,1)$ generalized packing meeting the bound of Proposition~\ref{prop:31bound}, except for the specific values listed in Table~\ref{table:3_1_exceptions} below.
\end{theorem}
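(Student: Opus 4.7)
The plan is to assemble this theorem purely as a summary of the results established earlier in the section. There is essentially no new argument required: the proof amounts to partitioning on the value of $v_1$ and then quoting, in each range, the appropriate result already proved.

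First, I would dispose of the ``generic'' range $v_1 \notin \{6,7,11,12,13\}$ in a single sentence. By Theorem~\ref{thm:CHW} (the Colbourn--Horsley--Wang result), a strong Kirkman signal set $\skss(v_1)$ exists for every such $v_1 \geq 3$. Then Theorem~\ref{thm:SKSS} immediately converts this into a $2$-$(\vv,\kk,1)$ generalized packing meeting the bound of Proposition~\ref{prop:31bound} for every choice of $v_2 \geq 1$. So no $(v_1,v_2)$ with $v_1 \notin \{6,7,11,12,13\}$ contributes an exception.

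Next I would handle the five exceptional values of $v_1$ by citing, in turn, the five lemmas immediately preceding the theorem. For each $v_1 \in \{6,7,11,12,13\}$, those lemmas give the exact value of $D(\vv,\kk,2)$ as a function of $v_2$, and comparison with the upper bound $\min\{D(v_1,3,2),\, v_2\lfloor v_1/3\rfloor\}$ of Proposition~\ref{prop:31bound} determines precisely the pairs $(v_1,v_2)$ for which the bound is not met. Concretely, the bound fails exactly at $(6,2)$ and $(6,3)$ (from the $v_1=6$ lemma), at $(7,2),(7,3),(7,4),(7,5),(7,6)$ (from the $v_1=7$ lemma), at $(11,6)$, at $(12,5)$ and $(12,6)$, and at $(13,7)$. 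These are precisely the entries of Table~\ref{table:3_1_exceptions}, and for every other $(v_1,v_2)$ with $v_1 \in \{6,7,11,12,13\}$ the corresponding lemma exhibits a packing of size equal to the bound.

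The only real ``obstacle'' here is bookkeeping: one must consistently check, for each lemma, whether the stated value of $D(\vv,\kk,2)$ agrees with $\min\{D(v_1,3,2),\, v_2\lfloor v_1/3\rfloor\}$, which in turn requires knowing $D(v_1,3,2)$ from the table for Sch\"onheim's theorem (namely $4, 7, 17, 20, 26$ for $v_1=6,7,11,12,13$ respectively). Once these comparisons are tabulated, the two displayed ranges (small $v_2$ where $v_2\lfloor v_1/3\rfloor$ dominates and large $v_2$ where $D(v_1,3,2)$ dominates) match the lemmas directly, and the residual ``middle'' values produce exactly the exceptions listed in the table. This completes the proof by pure assembly of previously established facts.
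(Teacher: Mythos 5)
Your proposal is correct and is exactly how the paper intends the theorem to be established: the generic range follows from Theorem~\ref{thm:CHW} together with Theorem~\ref{thm:SKSS}, and the five exceptional values $v_1\in\{6,7,11,12,13\}$ are settled by the preceding lemmas, with the bookkeeping against $\min\{D(v_1,3,2),\,v_2\lfloor v_1/3\rfloor\}$ (using $D(v_1,3,2)=4,7,17,20,26$) yielding precisely the entries of Table~\ref{table:3_1_exceptions}. Your tabulated list of failing pairs matches the paper's table exactly.
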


%\newpage
\begin{table}[hbt]
\centering
\renewcommand{\arraystretch}{1.2}
\begin{tabular}{|c|c|c|} \hline
$\vv=(v_1,v_2)$  & Projected bound & Packing number \\ \hline
$(6,2)$          & 4               & 2 \\
$(6,3)$          & 4               & 3 \\ \hline
$(7,2)$          & 4               & 3 \\
$(7,3)$          & 6               & 4 \\
$(7,4)$, $(7,5)$ & 7               & 5 \\
$(7,6)$          & 7               & 6 \\ \hline
$(11,6)$         & 17              & 16 \\ \hline
$(12,5)$, $(12,6)$ & 20            & 19 \\ \hline
$(13,7)$         & 26              & 25 \\ \hline
\end{tabular}
\renewcommand{\arraystretch}{1.0}
\caption{Exceptions for when $D(\vv,(3,1),2)$ does not meet the bound of Proposition~\ref{prop:31bound}.}
\label{table:3_1_exceptions}
\end{table}

\subsection{$\kk = (2,1,1)$} \label{(2,1,1) case}
This is the first case for $k=4$ where no generalized $2$-$(\vv,\kk,1)$ design exists to provide a starting point for us (apart from the trivial case where $\vv=(2,1,1)$).  However, as mentioned in Section~\ref{section:orthog_colouring}, interesting objects still arise.

When $\kk=(2,1,1)$, we let $\vv=(v_1, v_2, v_3)$ and note that we may assume without loss of generality that $v_2 \leq v_3$.  Proposition~\ref{t=2_bound} gives us the following upper bound on the packing number in this case.
\begin{proposition} \label{211-bound}
Let $\vv=(v_1, v_2, v_3)$, where $v_2 \leq v_3$ and $\kk=(2,1,1)$.  Then
$$
D(\vv,\kk,2) \leq \min \left\{ \binom{v_1}{2}, v_2 \left\lfloor \frac{v_1}{2}\right\rfloor, v_2 v_3 \right\}.
$$
\end{proposition}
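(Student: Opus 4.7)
The plan is to read off the desired bound as an immediate specialization of Proposition~\ref{t=2_bound} to the vectors $\vv=(v_1,v_2,v_3)$ and $\kk=(2,1,1)$, and then to simplify the resulting expression using the hypothesis $v_2\leq v_3$.

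First I would unpack the ``$\min_{k_i\geq 2}$'' piece of Proposition~\ref{t=2_bound}. Since only $k_1=2$ satisfies $k_i\geq 2$, this minimum contributes $D(v_1,2,2)=\binom{v_1}{2}$ (a $2$-$(v_1,2,1)$ packing is just a simple graph on $v_1$ vertices, and so is maximized by $K_{v_1}$). Next I would unpack the second minimum, which ranges over ordered pairs $(i,j)$ with $i\neq j$: the unordered pair $\{1,2\}$ yields $\lfloor v_1 v_2/2\rfloor$ in one ordering and $v_2\lfloor v_1/2\rfloor$ in the other, and a short parity check on $v_1$ confirms that the latter is always the smaller (the two are equal when $v_1$ is even, and differ by $\lfloor v_2/2\rfloor$ when $v_1$ is odd). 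The same reasoning applied to $\{1,3\}$ keeps only $v_3\lfloor v_1/2\rfloor$, while the pair $\{2,3\}$ has $k_i=k_j=1$ and therefore contributes $v_2 v_3$ in either ordering.

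Thus the second minimum in Proposition~\ref{t=2_bound} reduces to $\min\{v_2\lfloor v_1/2\rfloor,\ v_3\lfloor v_1/2\rfloor,\ v_2 v_3\}$, and the hypothesis $v_2\leq v_3$ immediately kills the $v_3\lfloor v_1/2\rfloor$ term in favour of the smaller $v_2\lfloor v_1/2\rfloor$. Taking the minimum with $\binom{v_1}{2}$ from the first piece produces exactly $\min\{\binom{v_1}{2},\ v_2\lfloor v_1/2\rfloor,\ v_2 v_3\}$, which is the claim. I foresee no real obstacle here, since the argument is pure bookkeeping layered on top of Proposition~\ref{t=2_bound}; the only minor technicality is the floor inequality $v_2\lfloor v_1/2\rfloor\leq\lfloor v_1 v_2/2\rfloor$, which is immediate from the parity case split on $v_1$.
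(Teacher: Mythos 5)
Your proposal is correct and is essentially the paper's own argument: the paper simply states that Proposition~\ref{t=2_bound} yields this bound, and your write-up just makes explicit the bookkeeping (the first minimum giving $D(v_1,2,2)=\binom{v_1}{2}$, the ordered pairs collapsing via $v_2\lfloor v_1/2\rfloor\leq\lfloor v_1v_2/2\rfloor$ and $v_2\leq v_3$). All the individual checks, including the parity computation, are accurate.
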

%We will show that, with a finite number of exceptions, this bound can be achieved.  

In Section~\ref{section:orthog_colouring}, we saw that $2$-$(\vv,\kk,1)$ generalized packings with $\kk=(k-2,1,1)$ can be described in terms of orthogonal colourings.  In this case, we observe that a $2$-$(\vv,\kk,1)$ generalized packing with $\vv=(v_1,v_2,v_3)$ and $\kk=(2,1,1)$ is precisely equivalent to a pair of {\em orthogonal edge colourings} of a graph on $v_1$ vertices, where the two colourings use at most $v_2$ and at most $v_3$ colours respectively.  This concept was introduced by Archdeacon, Dinitz and Harary in 1985~\cite{archdeacon_dinitz_harary}, but does not appear to be particularly well-known.  (The case where each edge colouring is a 1-factorization is better-known: see Alspach et al.~\cite{alspach_et_al}, for instance.)  However, an alternative interpretation is as follows.

\begin{lemma} \label{array}
Let $S$ be a set of size $v_1$.  The existence of a $2$-$(\vv, \kk, 1)$ generalized packing with $N$ blocks, where $\vv = (v_1, v_2, v_3)$ and $\kk = (2,1,1)$ is equivalent to the existence of a $v_2 \times v_3$ array $A$ with the following properties:
\begin{itemize}
\item[(i)] each cell is either empty or else contains an unordered pair of elements from $S$;
\item[(ii)] exactly $N$ cells are non-empty;
\item[(iii)] each symbol appears at most once in each row and at most once in each column of $A$;
\item[(iv)] each pair of symbols appears in at most one cell.
\end{itemize}
\end{lemma}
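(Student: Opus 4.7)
The plan is to set up a direct bijection between the blocks of a $2$-$(\vv,\kk,1)$ generalized packing and the non-empty cells of a $v_2 \times v_3$ array $A$. I would label the rows of $A$ by the elements of $X_2$, the columns by the elements of $X_3$, and identify $X_1$ with the symbol set $S$. To each block $(B,\{i\},\{j\})$ of the packing, place the pair $B$ in cell $(i,j)$ of $A$; conversely, every non-empty cell at position $(i,j)$ with entry $B$ yields the block $(B,\{i\},\{j\})$. This correspondence is manifestly bijective, so the number of blocks equals the number of non-empty cells, which immediately gives properties (i) and (ii).

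The substantive step is to translate the packing condition (no $(\vv,\kk,2)$-admissible $\TT$ is contained in more than one block) into the array conditions by enumerating all $(\kk,2)$-admissible vectors $\tt$. Since $\kk=(2,1,1)$ and $\sum t_i = 2$ with $t_i \leq k_i$, there are exactly four such $\tt$, namely $(2,0,0)$, $(1,1,0)$, $(1,0,1)$, and $(0,1,1)$, and I would handle these one at a time. The case $\tt=(2,0,0)$ says each pair $\{x,y\} \subseteq X_1$ lies in at most one block, which translates directly into property (iv). The cases $\tt=(1,1,0)$ and $\tt=(1,0,1)$ say that a symbol $x \in X_1$ appears together with any given $i \in X_2$ in at most one block and with any given $j \in X_3$ in at most one block, respectively; together these give property (iii). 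Finally, $\tt=(0,1,1)$ says that each pair $(\{i\},\{j\}) \in \binom{X_2}{1}\times\binom{X_3}{1}$ is contained in at most one block, which is precisely what it means for cell $(i,j)$ to contain a single entry, an implicit part of (i).

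The converse is obtained by reading the correspondence in reverse: an array satisfying (i)--(iv) yields the collection of triples $(B,\{i\},\{j\})$ obtained from its non-empty cells, and the four admissibility conditions above are all met, so a generalized packing with $N$ blocks results. I do not anticipate any significant obstacle; the argument is essentially a dictionary translation between packing language and array language. The only care required will be to enumerate all four $(\kk,2)$-admissible vectors---in particular not to overlook $\tt=(0,1,1)$, which corresponds to the often-implicit uniqueness of the entry in each cell---and to verify that each admissibility condition matches exactly one of the stated array properties.
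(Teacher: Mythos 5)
Your proposal is correct and follows essentially the same route as the paper: the paper sets up exactly this block-to-cell correspondence and leaves the verification as "easy to verify," whereas you carry it out explicitly by enumerating the four $(\kk,2)$-admissible vectors and matching each to one of the array conditions. The only point worth noting is that your case $\tt=(0,1,1)$ is also what guarantees the correspondence is well-defined in the forward direction (no two blocks compete for the same cell), which you correctly identify as the uniqueness of each cell's entry.
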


\begin{proof}
Suppose we have a $2$-$(\vv, \kk, 1)$ packing of size $b$, and suppose that $X_1 =S$, $X_2 = \{x_1, x_2, \ldots, x_{v_2}\}$ and $X_3 = \{y_1, y_2, \ldots, y_{v_3}\}$.  We form the desired array $A$ by, for each block $\{ \{ s, s' \}, \{ x_i \}, \{y_j\} \}$, placing the pair $\{s, s'\}$ in cell $(i,j)$.  It is easy to verify that $A$ has the desired properties.  

Conversely, suppose we have an array $A$ of the type described.  We form a collection of blocks $\mathscr{B}$ by taking all blocks of the form $\{ \{ s, s'\}, \{x_i\}, \{y_j\}\}$, where the $(i,j)$-entry of $A$ is nonempty and contains the pair $\{s,s'\}$.  These blocks form the desired generalized packing. 
\end{proof}

\begin{example} \label{howell_example}
The array
$$
\begin{array}{|c|c|c|c|} \hline
1,2 & 3,4 & 5,6 &    \\ \hline
   & 1,6 & 2,3 & 4,5 \\ \hline
3,5 &    & 1,4 & 2,6 \\ \hline
4,6 & 2,5 &    & 1,3 \\ \hline
\end{array}
$$
is equivalent to the $2$-$((6,4,4),(2,1,1),1)$ packing with the following blocks:
$$
\begin{array}{llll}
(\{1,2\},\{1\},\{1\}) & (\{1,6\},\{2\},\{2\}) & (\{3,5\},\{3\},\{1\}) & (\{4,6\},\{4\},\{1\}) \\
(\{3,4\},\{1\},\{2\}) & (\{2,3\},\{2\},\{3\}) & (\{1,4\},\{3\},\{3\}) & (\{2,5\},\{4\},\{2\}) \\
(\{5,6\},\{1\},\{3\}) & (\{4,5\},\{2\},\{4\}) & (\{2,6\},\{3\},\{4\}) & (\{1,3\},\{4\},\{4\}). 
\end{array}
$$
\end{example}

In this section, we will describe generalized packings with $\kk=(2,1,1)$ in terms of the associated array, as given in Lemma~\ref{array}.  In particular, we will construct maximum generalized packings by showing the existence of such an array.  In most cases, our constructions will use arrays known as Howell designs, introduced in a 1974 paper of Hung and Mendelsohn~\cite{hung_mendelsohn}, which we now define.

\begin{definition}
Let $s$ and $n$ be integers.  A {\em Howell design} $H(s,2n)$ is an $s \times s$ array satisfying the following properties:
\begin{itemize}
\item[(i)] each cell is either empty or contains an unordered pair of symbols chosen from an alphabet of size $2n$;
\item[(ii)] each symbol appears exactly once in each row and column;
\item[(iii)] each pair of symbols appears in at most one cell.
\end{itemize}
\end{definition}

Note that since each of the $s$ rows contains $n$ filled cells, the total number of nonempty cells in a Howell design $H(s,2n)$ is $sn$.  The question of the existence of Howell designs was settled in two papers from the 1980s, and is stated below.

\begin{theorem} [Anderson, Schellenberg and Stinson \cite{anderson_schellenberg_stinson}; Stinson~\cite{Stinson82}] \label{howell}
There exists a Howell design $H(s,2n)$ if and only if $s+1 \leq 2n \leq 2s$ and $(s,2n) \neq (2,4)$, $(3,4)$, $(5,6)$ or $(5,8)$.
\end{theorem}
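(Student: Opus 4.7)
The plan is to split the proof into a short necessity argument and a substantially harder sufficiency argument, the latter being what occupies the bulk of the two cited papers.

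First I would establish necessity. Each row of an $H(s,2n)$ must contain every one of the $2n$ symbols exactly once, so each row holds exactly $n$ filled cells, forcing $n \le s$ and hence $2n \le 2s$. For the lower bound $s+1 \le 2n$, I fix a single symbol and observe that it appears in each row and in each column, so in exactly $s$ cells total, and in each such cell it is paired with a distinct other symbol (otherwise some pair would repeat). There are only $2n-1$ available partners, so $s \le 2n-1$. For the four finite exceptions I would give direct non-existence proofs. The cases $(3,4)$ and $(5,6)$ are Room squares of sides $3$ and $5$, and I would recover the classical argument: the cells containing a chosen symbol form a transversal, and the remaining cells of each row and column are then forced, leading to a combinatorial contradiction (for side $3$, forcing the $(1,2)$ entry to equal $\{2,3,4\}\setminus\{a_1\}$ and simultaneously $\{2,3,4\}\setminus\{a_2\}$ with $a_1\neq a_2$). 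The cases $(2,4)$ and $(5,8)$ I would dispose of by finite case analysis seeded on the placement of a single row or of a fixed symbol.

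For sufficiency, my plan is to combine direct constructions with recursive ones. The backbone of the direct constructions is the starter--adder method over $\mathbb{Z}_{2n-1}\cup\{\infty\}$: one searches for a starter, i.e.\ a set of $n$ pairs whose pairwise differences exhaust the nonzero group elements, together with an adder assigning column positions, and then develops the starter cyclically. When $2n = s+1$ the design is a Room square of side $s$, and I would quote the Mullin--Wallis existence theorem to cover every odd $s \geq 7$. For the complementary extreme $2n = 2s$ (no empty cells), I would build the design from a pair of orthogonal one-factorizations of $K_{2n}$. For intermediate parameters I would invoke a product or ``filling'' construction: embed an incomplete Howell design with a hole inside a larger one, or take the product of two smaller Howell designs on compatible parameter sets. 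Pairwise-balanced-design closure would then let me bootstrap from a finite list of directly constructed seeds to all admissible pairs.

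The hard part will be covering every admissible $(s,2n)$ by some combination of these methods and verifying that the only irreducible obstructions are the four listed exceptions. I expect the main difficulty to concentrate near the boundary $2n = s+1$, where the design is dense enough that local obstructions (the same ones that kill Room squares of sides $3$ and $5$) have little room to be absorbed, and in small even-$s$ cases where the starter-adder method over $\mathbb{Z}_{2n-1}$ does not apply cleanly. Structuring the case analysis by the parity of $s$ and by the residue of $2n$ modulo $s$ is what I expect to be necessary to keep the construction catalogue manageable.
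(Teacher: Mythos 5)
This statement is imported by the paper as a known result of Anderson--Schellenberg--Stinson and Stinson; the paper contains no proof of it, so there is nothing internal to compare your argument against. Judged on its own terms, your necessity half is correct and complete: counting symbol occurrences per row gives $n\le s$, and counting the $s$ distinct partners of a fixed symbol against the $2n-1$ available ones gives $s\le 2n-1$. Your identification of $(3,4)$ and $(5,6)$ with the nonexistent Room squares of sides $3$ and $5$, and of $(2,4)$ with the nonexistence of two $\mathrm{MOLS}(2)$, is also right.

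The sufficiency half, however, is a research programme rather than a proof. You name the correct toolkit --- starter--adder constructions over $\mathbb{Z}_{2n-1}\cup\{\infty\}$, the Mullin--Wallis theorem for the Room-square extreme $2n=s+1$, orthogonal one-factorizations for the $2n=2s$ extreme, and PBD-closure/filling-in-holes recursions --- and this does mirror the strategy of the two cited papers. But the entire content of those papers is the verification that these methods actually cover every admissible pair $(s,2n)$, including the delicate even-side and odd-side cases and the finite list of sporadic direct constructions needed to seed the recursions; none of that is carried out here. Likewise, the nonexistence of $H(5,8)$ is a genuinely nontrivial result (unlike $H(2,4)$), and ``finite case analysis seeded on the placement of a fixed symbol'' does not yet constitute an argument for it. So the proposal is a sound outline of the published proofs, but as written it establishes only the necessity direction.
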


There are two extreme cases of Howell designs that are worth mentioning here.  The first is the case $s=n$, where an $H(n,2n)$ is known as a $\mathrm{SOMA}(2,n)$ (see~\cite{soicher_SOMA}).  (The name is an acronym for {\em simple orthogonal multi-array}, and is due to Phillips and Wallis~\cite{phillips_wallis}.)  In this case, every cell is filled, and a $\mathrm{SOMA}(2,n)$ may be obtained by superimposing two $\mathrm{MOLS}(n)$ with disjoint symbol sets; such a SOMA is said to be {\em Trojan}~\cite{bailey_SLS}.  Of note is the existence of a $\mathrm{SOMA}(2,6)$, first shown by Hung and Mendelsohn~\cite{hung_mendelsohn}; several examples are now known~\cite{bailey_SLS,bailey_howell,seah_stinson}.  However, it is not difficult to see that there is no $\mathrm{SOMA}(2,2)$.

The second extreme case is when $s=2n-1$, where an $H(2n-1,2n)$ is known as a {\em Room square} of side $2n-1$, denoted $\mathrm{RS}(2n-1)$, after T.~G.~Room~\cite{room}.  The existence of Room squares is less straightforward to demonstrate: this was done by Mullin and Wallis in 1975~\cite{mullin_wallis}, who showed that there exists an $\mathrm{RS}(2n-1)$ if and only if $2n-1\geq 7$.  Detailed information on Room squares can be found in the survey by Dinitz and Stinson~\cite{dinitz_stinson}.  Note that in a Room square, every possible pair of symbols appears in exactly one cell.

In light of Lemma~\ref{array}, the following application of Howell designs is clear.  (Note that if $\vv=(2n,s,s)$ and $\kk=(2,1,1)$, the bound of Proposition~\ref{211-bound} implies that $D(\vv,\kk,2) \leq sn$.)

\begin{lemma} \label{howell_lemma}
Let $\vv = (2n,s,s)$ and $\kk = (2,1,1)$.  If there exists a Howell design $H(s,2n)$, then there exists a $2$-$(\vv,\kk,1)$ generalized packing of size $sn$, and so meeting the bound of Proposition~\ref{211-bound}.
\end{lemma}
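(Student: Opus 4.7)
The plan is to invoke Lemma~\ref{array} and observe that a Howell design is essentially a special case of the array described there, then do a quick arithmetic check against the bound in Proposition~\ref{211-bound}.

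First, I would take $S = X_1$ of size $2n$ and identify the alphabet of the Howell design $H(s,2n)$ with $S$. Let $A$ be the underlying $s \times s$ array. I would then verify that $A$ satisfies each of the four properties in Lemma~\ref{array}: property (i) is immediate from the definition of a Howell design; property (iii) holds because in a Howell design each symbol appears \emph{exactly} once in each row and column (which is stronger than the ``at most once'' required); and property (iv) is precisely the third defining property of a Howell design. For property (ii), I would count: since each of the $s$ rows contains each of the $2n$ symbols exactly once and each filled cell uses two symbols, each row has exactly $n$ nonempty cells, giving $N = sn$ nonempty cells in total. Applying Lemma~\ref{array} then produces a $2$-$(\vv,\kk,1)$ generalized packing with $sn$ blocks.

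Next, I would confirm that $sn$ matches the upper bound given by Proposition~\ref{211-bound}, which in this case reads
\[
D(\vv,\kk,2) \leq \min\left\{ \binom{2n}{2},\ s\left\lfloor \frac{2n}{2} \right\rfloor,\ s^{2} \right\} = \min\{ n(2n-1),\ sn,\ s^{2} \}.
\]
The Howell existence conditions $s+1 \leq 2n \leq 2s$ from the definition give $n \leq s \leq 2n-1$, so $sn \leq s^{2}$ and $sn \leq n(2n-1)$. Hence the minimum is exactly $sn$, and the packing produced meets the bound.

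There is no genuine obstacle here: the statement is essentially a translation exercise, with the Howell design's defining axioms mapping directly onto the array conditions of Lemma~\ref{array}, and the only nontrivial point being the easy row-count $sn$ and the straightforward verification that this quantity realises the minimum in Proposition~\ref{211-bound}.
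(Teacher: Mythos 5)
Your proposal is correct and follows essentially the same route as the paper, which presents this lemma as an immediate consequence of Lemma~\ref{array} together with the parenthetical observation that the bound of Proposition~\ref{211-bound} reduces to $sn$ here. Your explicit verification that $sn$ realises the minimum (using $n \leq s \leq 2n-1$, which indeed holds whenever an $H(s,2n)$ exists) just fills in the detail the paper leaves implicit.
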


Example~\ref{howell_example} provides an illustration of Lemma~\ref{howell_lemma}, in which we construct a $2$-$(\vv,\kk,1)$ generalized packing of size 12 from a Howell design $H(4,6)$.

For other vectors $\vv$, Howell designs nevertheless provide the prototypical example of a generalized packing on which we base our constructions.  Recall that a generalized packing is equivalent to an array similar to a Howell design on $v_1$ symbols (which may be thought of as a partial Howell design, or a Howell packing).  If $v_1$ is even, our constructions frequently start with a Howell design on $v_1$ symbols.  However, if $v_1$ is odd, there is no Howell design on $v_1$ symbols.  Our strategy in such cases is to begin with a Howell design on $v_1+1$ symbols, and then remove all entries containing the superfluous symbol.  We thus define the symbol set $\tx_1$ and integer $\tv_1$ to be given by
$$
%\begin{array}{ccc}
\tx_1 = \left\{
\begin{array}{ll} 
X_1, & \mbox{if } v_1 \mbox{ is even} \\
X_1 \cup \{\infty\}, & \mbox{if } v_1 \mbox{ is odd (where } \infty \notin X_1)
\end{array}
\right.
$$
and
$$
%\begin{array}{ccc}
\tv_1 = \left\{
\begin{array}{ll} 
v_1, & \mbox{if } v_1 \mbox{ is even} \\
v_1+1, & \mbox{if } v_1 \mbox{ is odd}.
\end{array}
\right.
$$

We will show, that except for a finite number of exceptional vectors $\vv$, the upper bound on the packing number from Proposition~\ref{211-bound} can be achieved.  Let $m = \min \left\{ \binom{v_1}{2}, v_2 \lfloor v_1/2 \rfloor, v_2 v_3 \right\}$, and recall that Proposition~\ref{211-bound} states that $D(\vv,\kk,2) \leq m$.  It is easy to see that
$$
m = \left\{ 
\begin{array}{ll}
\binom{v_1}{2}, & \mbox{if } v_1 \leq v_2 \\
v_2 \lfloor v_1/2 \rfloor, & \mbox{if } v_2 < v_1 \leq 2v_3 \\
v_2 v_3, & \mbox{if } v_1 > 2v_3.
\end{array}
\right.
$$
Each of these three cases will be considered separately.  In addition, we will split the case in which $v_2 < v_1 \leq 2v_3$ into two separate cases: $v_2 < v_1 \leq 2v_2$ (in which we will employ a Howell design $H(v_2,\tv_1)$) and $2v_2 < v_1 \leq 2v_3$ (in which no such Howell design exists).

The first case which we consider is that $v_1 \leq v_2$. 

\begin{lemma} \label{binom_211}
Let $\vv=(v_1, v_2, v_3)$, where $v_1 \leq v_2 \leq v_3$, and $\kk=(2,1,1)$.  If $v_1 \notin \{3,4,5,6\}$, then $D(\vv,\kk,2) = \binom{v_1}{2}$.  
\end{lemma}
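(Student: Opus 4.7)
The plan is to match the upper bound of Proposition~\ref{211-bound} with a Room square construction. First, I would verify that under the hypothesis $v_1 \leq v_2 \leq v_3$, the three expressions in Proposition~\ref{211-bound} satisfy $\binom{v_1}{2} \leq v_2 \lfloor v_1/2 \rfloor$ (true whether $v_1$ is even or odd) and $\binom{v_1}{2} \leq v_1^2 \leq v_2 v_3$, so the bound reduces to $D(\vv,\kk,2) \leq \binom{v_1}{2}$.

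For the matching construction, I would invoke Lemma~\ref{array}: it suffices to exhibit a $v_2 \times v_3$ array on the symbol set $X_1$ in which every pair from $X_1$ occupies exactly one cell, each symbol appears at most once per row and per column, and the total number of filled cells is $\binom{v_1}{2}$. The requirement that every pair appear (forced by counting) is precisely the defining property of a Room square, possibly after augmenting the symbol set by an infinity element.

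If $v_1$ is even, then $v_1 - 1 \geq 7$, so by Mullin--Wallis there exists an $\rs(v_1 - 1)$ on the $v_1$ symbols of $X_1$; I would pad this $(v_1-1) \times (v_1-1)$ array with empty rows and columns to dimensions $v_2 \times v_3$, which is legal because $v_2, v_3 \geq v_1 > v_1 - 1$. If $v_1$ is odd with $v_1 \geq 7$, then $\tv_1 = v_1 + 1$ is even and $\rs(v_1)$ exists on $\tx_1 = X_1 \cup \{\infty\}$; deleting the $v_1$ cells that contain $\infty$ leaves a $v_1 \times v_1$ array with
\[
\binom{\tv_1}{2} - v_1 = \binom{v_1+1}{2} - v_1 = \binom{v_1}{2}
\]
filled cells, in which every pair from $X_1$ appears exactly once, and I pad to $v_2 \times v_3$ as before. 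In either case, the resulting array satisfies conditions (i)--(iv) of Lemma~\ref{array} and produces a generalized packing of size $\binom{v_1}{2}$, meeting the upper bound.

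The only genuine obstruction in this approach is the non-existence of $\rs(3)$ and $\rs(5)$: the excluded even cases $v_1 \in \{4,6\}$ would require $\rs(3)$ and $\rs(5)$ respectively, and the excluded odd cases $v_1 \in \{3,5\}$ require the same two Room squares. These four exceptions are therefore exactly the values of $v_1$ for which this Room-square strategy cannot be applied, consistent with the hypothesis $v_1 \notin \{3,4,5,6\}$; the trivial edge values $v_1 \in \{1,2\}$ are handled by the empty packing and a single block respectively, and the exceptional cases will presumably require bespoke treatment in subsequent lemmas.
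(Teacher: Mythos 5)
Your proposal is correct and follows essentially the same route as the paper: reduce the upper bound of Proposition~\ref{211-bound} to $\binom{v_1}{2}$, then realize it via the array of Lemma~\ref{array} built from a Room square $\rs(\tv_1-1)$ on $\tx_1$ (deleting the cells containing $\infty$ when $v_1$ is odd and padding with empty rows and columns), with the exceptions $v_1\in\{3,4,5,6\}$ arising exactly from the nonexistence of $\rs(3)$ and $\rs(5)$. The cell count $\binom{v_1+1}{2}-v_1=\binom{v_1}{2}$ and the handling of $v_1\in\{1,2\}$ are both fine.
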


\begin{proof}
Proposition~\ref{211-bound} guarantees that $D(\vv,\kk,2) \leq \binom{v_1}{2}$, so it suffices to construct a packing of this size.  Construct a Room square $\rs(\tv_1-1)$ on symbol set $\tx_1$.  From this Room square, form an array $A$ by deleting the pair in any cell containing $\infty$ (if $v_1$ is odd), and appending $v_2-\tv_1+1$ empty rows and $v_3-\tv_1+1$ empty columns, so that $A$ is a $v_2 \times v_3$ array on symbol set $X_1$.  If $v_1$ is even, then $A$ contains $v_1-1$ nonempty rows, each with $v_1/2$ filled entries, while if $v_1$ is odd, then $A$ contains $v_1$ nonempty rows, each with $(v_1-1)/2$ filled entries.  In either case, we obtain a $2$-$(\vv,\kk,1)$ packing of size $\binom{v_1}{2}$.  
\end{proof}

The exceptional values $v_1 \in \{3,4,5,6\}$ arise from the fact that there is no $\mathrm{RS}(\tv_1)$ in these cases.  We deal with these exceptions now, showing that the bound of Proposition~\ref{211-bound} is met except for $\vv=(4,4,4)$ and $\vv=(5,5,5)$.

\begin{lemma} \label{lemma:333}
Let $\vv=(3,v_2,v_3)$, where $3 \leq v_2 \leq v_3$.  Then $D(\vv,\kk,2) = 3$.
\end{lemma}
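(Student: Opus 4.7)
The plan is to sandwich $D(\vv,\kk,2)$ between an upper bound of $3$ supplied by Proposition~\ref{211-bound} and a matching lower bound coming from an explicit construction. For the upper bound, with $v_1 = 3$ and $3 \leq v_2 \leq v_3$, we have $\binom{v_1}{2} = 3$, $v_2 \lfloor v_1/2 \rfloor = v_2 \geq 3$, and $v_2 v_3 \geq 9$, so the minimum of these three quantities is $3$.

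For the lower bound, I would work through the array formulation from Lemma~\ref{array}. Since $|X_1| = 3$, there are only three unordered pairs available on $X_1$, namely $\{1,2\}$, $\{1,3\}$, and $\{2,3\}$. Because $v_2 \geq 3$ and $v_3 \geq 3$, the $v_2 \times v_3$ array admits a diagonal of length at least $3$, so I would place the three pairs in cells $(1,1)$, $(2,2)$, $(3,3)$ respectively. Each of the symbols $1$, $2$, $3$ then appears in exactly two of these cells, which sit in distinct rows and distinct columns, and each pair appears in exactly one cell; so properties (i)--(iv) of Lemma~\ref{array} are satisfied. Applying the lemma produces a $2$-$(\vv,\kk,1)$ generalized packing with three blocks, witnessing $D(\vv,\kk,2) \geq 3$.

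There is no serious obstacle here; the only observation required is that the three available pairs on $\{1,2,3\}$ can always be placed on the diagonal of a sufficiently large array without conflict, which holds precisely because $v_2, v_3 \geq 3$. Combining the two bounds gives $D(\vv,\kk,2) = 3$.
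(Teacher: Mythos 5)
Your argument is correct and matches the paper's proof: the upper bound of $3$ comes from Proposition~\ref{211-bound} since $\binom{3}{2}=3$ is the minimum of the three quantities when $v_2\geq 3$, and the paper likewise realizes the lower bound by placing the three pairs $\{1,2\}$, $\{1,3\}$, $\{2,3\}$ in cells of a $v_2\times v_3$ array so that no two share a row or column (your diagonal placement is exactly such a choice). Nothing further is needed.
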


\begin{proof}
By Proposition~\ref{211-bound}, we have that $D(\vv,\kk,2) \leq \binom{3}{2} = 3$.  A packing of size 3 is easy to construct: form a $v_2 \times v_3$ array, and place the three entries $\{1,2\}$, $\{1,3\}$ and $\{2,3\}$ so that no two occur in the same row or column.
\end{proof}

\begin{lemma} \label{lemma:444}
Let $\vv=(4,v_2,v_3)$, where $4 \leq v_2 \leq v_3$.  Then 
$$
D(\vv,\kk,2) = 
\left\{
\begin{array}{ll}
5, & \mbox{if } \vv=(4,4,4) \\
6, & \mbox{otherwise}.
\end{array}
\right.
$$
\end{lemma}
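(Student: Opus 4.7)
The upper bound $D(\vv,\kk,2) \leq 6$ is immediate from Proposition~\ref{211-bound}: since $v_2 \geq 4$, both $v_2\lfloor 4/2\rfloor$ and $v_2 v_3$ are at least $8$, so the minimum is $\binom{4}{2} = 6$. The work is to decide achievability, and the statement claims that $6$ is attained except at $\vv = (4,4,4)$, where only $5$ is.

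The principal obstacle is showing that $6$ cannot be attained when $\vv = (4,4,4)$. Using the array reformulation of Lemma~\ref{array}, I would suppose for contradiction that all six pairs of $\{1,2,3,4\}$ occupy distinct cells of a $4 \times 4$ array meeting conditions (i)--(iv). Since each row (and each column) has distinct symbols, it contains at most $\lfloor 4/2 \rfloor = 2$ pairs, so the row- and column-distributions of the six pairs each lie in $\{(2,2,2,0),\,(2,2,1,1)\}$. Two pairs sharing a row or column are necessarily disjoint, hence form one of the three $1$-factors of $K_4$, namely $F_1=\{12,34\}$, $F_2=\{13,24\}$, $F_3=\{14,23\}$; the key fact is that for $i \neq j$, every edge of $F_i$ meets every edge of $F_j$, so two pairs drawn from different $1$-factors cannot share a column. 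In the $(2,2,2,0)$ row case, the three nonempty rows realise the three distinct $1$-factors, so any column containing two pairs would take them from different $1$-factors and hence be forbidden; thus every column has at most one pair, giving at most $4$ pairs in total---a contradiction. In the $(2,2,1,1)$ row case, the two singleton pairs must together form the remaining $1$-factor $F_c$, and the \emph{only} column able to carry two pairs is the one stacking those two singletons---whereas either column distribution requires at least two such columns, again a contradiction.

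A concrete $5$-block packing for $\vv=(4,4,4)$ is easy to write down by hand (for instance, placing $\{1,2\}$, $\{1,3\}$, $\{1,4\}$ on the main diagonal and adding $\{2,3\}$ and $\{2,4\}$ in the remaining rows), giving $D((4,4,4),\kk,2) = 5$. For every other $\vv$ we have $v_3 \geq 5$, and it suffices to exhibit a single $4 \times 5$ array containing all six pairs: place $F_1$ and $F_2$ across the first four columns of rows $1$ and $2$ respectively, and stack the two pairs of $F_3$ in column $5$ of rows $3$ and $4$. This yields $D((4,4,5),\kk,2) = 6 = \binom{4}{2} = D(4,2,2)$, whereupon Proposition~\ref{increase_v}, applied with $i = 1$, allows $v_2$ and $v_3$ to be increased arbitrarily, covering every remaining vector $\vv = (4,v_2,v_3) \neq (4,4,4)$.
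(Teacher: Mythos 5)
Your proof follows essentially the same route as the paper: the upper bound of $6$ from Proposition~\ref{211-bound}, an explicit size-$6$ array for $\vv=(4,4,5)$ extended to all $\vv\neq(4,4,4)$ (the paper appends empty rows and columns, you invoke Proposition~\ref{increase_v}; these are the same idea), and a nonexistence argument for six pairs in a $4\times 4$ array. In fact your $1$-factor analysis of the $(2,2,2,0)$ and $(2,2,1,1)$ row distributions correctly supplies the detail that the paper dismisses with ``it is not difficult to show,'' which is a welcome addition.

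One small slip: your parenthetical size-$5$ example for $\vv=(4,4,4)$ does not work as described, since placing $\{1,2\},\{1,3\},\{1,4\}$ on the main diagonal leaves only one empty row, and $\{2,3\}$ and $\{2,4\}$ both contain the symbol $2$ so cannot share a row. A correct example is easy to produce (e.g.\ put $\{1,2\}$ and $\{3,4\}$ in row~1, then $\{1,3\}$, $\{1,4\}$, $\{2,4\}$ in rows $2$, $3$, $4$ in suitably chosen columns, as in Example~\ref{ex:444}), so this does not affect the validity of the result, but the example as written should be repaired.
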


\begin{proof}
If $v_1=4$, then Proposition~\ref{211-bound} says that $D(\vv,\kk,2) \leq \binom{4}{2} = 6$.  If $\vv \neq (4,4,4)$, then we have that $v_3 \geq 5$.  In this case, it suffices to show a packing of size 6 with $\vv=(4,4,5)$, as otherwise, we may append $v_2-4$ empty rows and $v_3-5$ empty columns.  For such a packing, see Example~\ref{ex:445} in Appendix~\ref{section:211appendix}.  

The remaining case is that $\vv=(4,4,4)$.  A packing of size~6 would require a $4 \times 4$ array whose entries are pairs of elements in $\{1,2,3,4\}$, such that each pair occurs in the array, and no symbol appears twice in any row or column.  It is not difficult to show that no such array can exist.  However, it is possible to find a packing of size 5; an example is given in Appendix~\ref{section:211appendix}, Example~\ref{ex:444}.
\end{proof}

\begin{lemma} \label{lemma:555}
Let $\vv=(5,v_2,v_3)$, where $5 \leq v_2 \leq v_3$.  Then 
$$
D(\vv,\kk,2) = \left\{ 
\begin{array}{ll}
9, & \mbox{if } \vv=(5,5,5) \\
10, & \mbox{otherwise.}
\end{array}
\right.
$$
\end{lemma}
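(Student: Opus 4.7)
The plan is to combine the upper bound from Proposition~\ref{211-bound}, an explicit construction for $\vv\ne (5,5,5)$, and a structural non-existence argument for $\vv=(5,5,5)$. First, since $v_2,v_3\ge 5$, we have $\min\{\binom{5}{2},\, v_2\lfloor 5/2\rfloor,\, v_2 v_3\}=\min\{10,\, 2v_2,\, v_2 v_3\}=10$, so Proposition~\ref{211-bound} immediately gives $D(\vv,\kk,2)\le 10$.

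For $\vv\ne (5,5,5)$, the assumption $v_2\le v_3$ forces $v_3\ge 6$, and by Lemma~\ref{array} it suffices to build a $5\times 6$ array on five symbols that contains each of the ten unordered pairs of $\{1,\ldots,5\}$ exactly once and in which no symbol repeats in any row or column. I would produce such an array directly: take a near-$1$-factorization of $K_5$ (so that row $r$ is a perfect matching of $\{1,\ldots,5\}\setminus\{r\}$) and greedily place the two pairs of each row into distinct columns, using the sixth column to absorb the slack needed to avoid column repeats. Any larger $v_2,v_3$ are then handled by appending empty rows and columns via Proposition~\ref{increase_v}, so $D(\vv,\kk,2)=10$ whenever $\vv\ne (5,5,5)$.

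The crux of the proof is the impossibility of a size-$10$ packing when $\vv=(5,5,5)$. By Lemma~\ref{array}, such a packing would be a $5\times 5$ array in which every row and column contains exactly two filled cells and each of the ten pairs of $\{1,\ldots,5\}$ occurs exactly once (each symbol appears four times, since no pair may repeat and no symbol can appear more than four times without forcing a repeat). The filled cells then form a $2$-regular bipartite graph $H$ on the five rows and five columns, so $H$ is a union of even cycles of total length $10$; the only options are $H\cong C_{10}$ or $H\cong C_4\cup C_6$. Because no symbol repeats along any row or column, cells of $H$ that share a vertex must carry disjoint pairs, so labelling each edge of $H$ by its pair sends each cycle of $H$ to a cycle of the same length in the Kneser graph $K(5,2)$, i.e.\ the \emph{Petersen graph}, with the resulting cycles vertex-disjoint because every pair is used exactly once. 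But the Petersen graph has girth $5$ and is non-Hamiltonian, so it contains neither a $4$-cycle nor a $10$-cycle; this rules out both possibilities for $H$, giving the required contradiction. Hence $D(\vv,\kk,2)\le 9$, and a packing of size $9$ at $(5,5,5)$ is straightforward to exhibit explicitly (for example, by removing a suitable pair from the array above and re-packing the remaining nine pairs into a $5\times 5$ frame), which I would record in Appendix~\ref{section:211appendix}. The main obstacle is precisely this non-existence step; the Petersen-graph identification is what turns an otherwise tedious case analysis into a short conceptual argument.
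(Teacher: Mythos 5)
Your proposal is correct and follows the same overall skeleton as the paper (upper bound of $10$ from Proposition~\ref{211-bound}, an explicit size-$10$ array for $\vv=(5,5,6)$ padded out to larger $v_2,v_3$, and a separate non-existence argument plus a size-$9$ example for $\vv=(5,5,5)$), but the one place the two diverge is the place that matters most: the paper dismisses the non-existence of a size-$10$ packing at $(5,5,5)$ as ``not difficult, although somewhat tedious'' and leaves it as an exercise, whereas you actually prove it, and do so cleanly. Your argument --- that the ten filled cells form a $2$-regular bipartite graph on rows and columns, hence a $C_{10}$ or a $C_4\cup C_6$, and that labelling cells by their pairs maps each such cycle to a cycle of the same length in the Kneser graph $K(5,2)$, i.e.\ the Petersen graph, which has girth $5$ and is non-Hamiltonian --- is correct in every step (each row and column must carry exactly two cells since three pairs would need six distinct symbols; all ten pairs are used exactly once so the walk in the Petersen graph has distinct vertices) and replaces the paper's implicit case analysis with a genuinely conceptual two-line contradiction. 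The only soft spots are on the construction side: your ``greedy'' placement of a near-$1$-factorization of $K_5$ into a $5\times 6$ frame, and the ``re-packing'' of nine pairs into a $5\times 5$ frame, are assertions rather than proofs, and since the column constraint is exactly the kind of orthogonality that fails for the nonexistent Howell design $H(5,6)$, these really do need explicit witnesses; the paper supplies them in Appendix~\ref{section:211appendix} (Examples~\ref{ex:555} and~\ref{ex:556}), so the gap is easily closed but should not be waved away.
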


\begin{proof}
We know by Proposition~\ref{211-bound} that $D(\vv,\kk,2) \leq \binom{5}{2}=10$.  First consider the case that $\vv=(5,5,5)$. 
It is not difficult, although somewhat tedious, to show that no packing of size 10 exists; we leave it as an exercise for the reader.
An example of a packing of size 9 may be found in Appendix~\ref{section:211appendix}, Example~\ref{ex:555}.

Otherwise, we have that $v_3 \geq 6$, and it suffices to find a packing of size 10 where $\vv=(5,5,6)$.  An example may be found in Appendix~\ref{section:211appendix}, Example~\ref{ex:556}. 
\end{proof}

\begin{lemma} \label{lemma:666}
Let $\vv=(6,v_2,v_3)$, where $6 \leq v_2 \leq v_3$.  Then $D(\vv,\kk,2) = 15$.
\end{lemma}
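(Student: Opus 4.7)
The plan is first to apply Proposition~\ref{211-bound}, which gives $D(\vv,\kk,2)\le\min\{\binom{6}{2},\,3v_2,\,v_2v_3\}=15$ whenever $v_2,v_3\ge 6$, and then to exhibit a matching construction. Via the array interpretation of Lemma~\ref{array}, it suffices to produce a single $6\times 6$ array whose entries are the $\binom{6}{2}=15$ unordered pairs from $\{1,\ldots,6\}$ (each occurring exactly once) with no symbol repeated in any row or column; when $v_2$ or $v_3$ exceeds $6$, we then append the appropriate number of empty rows and columns.

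Unlike the cases $v_1\ge 7$ handled by Lemma~\ref{binom_211}, no Room square $\mathrm{RS}(5)$ exists, and Theorem~\ref{howell} rules out both $H(5,6)$ (on the exception list) and $H(6,6)$ (which fails the necessary condition $s+1\le 2n$); so neither of the standard templates is available. My intention is therefore to build the array directly from a $1$-factorization $P_1,\ldots,P_5$ of $K_6$. Place $P_1,P_2,P_3$ as full $1$-factors in rows~$1$--$3$, using columns $1,2,3$ insofar as possible and spilling the remaining edges into columns~$4,5$ as forced by the column constraint; then fill rows~$4,5,6$ with the six edges of $P_4\cup P_5$, using columns~$2$--$6$ (column~$1$ having already been saturated by the first three rows). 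This yields the row- and column-degree sequence $(3,3,3,2,2,2)$ in both directions. An explicit array of this type will be recorded in Appendix~\ref{section:211appendix}.

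The main obstacle is precisely the non-existence of $\mathrm{RS}(5)$: we cannot realize the clean $6\times 5$ shape in which every row contains three pairs and the five $1$-factors each occupy a coherent column pattern. Consequently, the construction has to distribute six edges into the bottom-right $3\times 3$ block of the array together with a few leftover cells in the top-right block, and a short case analysis (equivalently, a brief computer check) is needed to confirm that all column constraints can simultaneously be met. Once one valid configuration is found, the extension to arbitrary $v_2,v_3\ge 6$ by padding with empty rows and columns is immediate, and matches the upper bound of $15$.
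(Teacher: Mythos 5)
Your argument follows the paper's proof exactly: the upper bound of $15$ comes from Proposition~\ref{211-bound} with $v_2,v_3\geq 6$, and everything reduces to exhibiting one packing of size $15$ for $\vv=(6,6,6)$ (the paper's Example~\ref{ex:666}, likewise built by distributing the five one-factors of $K_6$ over six columns) and padding with empty rows and columns. The only missing piece is the explicit array you promise, but the shape you describe is realizable, e.g.\ take rows $\{3,4\},\{1,6\},\{2,5\}$ in columns $1,2,3$; $\{2,6\},\{4,5\},\{1,3\}$ in columns $1,2,4$; $\{1,5\},\{3,6\},\{2,4\}$ in columns $1,3,5$; $\{5,6\},\{1,2\}$ in columns $5,6$; $\{2,3\},\{4,6\}$ in columns $2,4$; and $\{1,4\},\{3,5\}$ in columns $3,6$ — so the deferral is harmless.
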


\begin{proof}
From Proposition~\ref{211-bound}, we have that $D(\vv,\kk,2) \leq \binom{6}{2}=15$.  To show that this upper bound is realizable, it suffices to construct a $2$-$(\vv, \kk, 1)$ generalized packing of size 15 where $\vv=(6,6,6)$.  For such a packing, see Example~\ref{ex:666} of Appendix~\ref{section:211appendix}. 
\end{proof}

The next case we consider is that $v_2 < v_1 \leq 2v_2$.

\begin{lemma} \label{howell_211}
Let $\vv=(v_1,v_2,v_3)$, where $v_2 < v_1 \leq 2v_2$ and $v_2 \leq v_3$, and let $\kk=(2,1,1)$.  If $(v_1,v_2) \notin \{(3,2), (4,2), (4,3), (6,5), (7,5), (8,5)\}$, then $D(\vv,\kk,2) = v_2 \lfloor v_1/2 \rfloor$.
\end{lemma}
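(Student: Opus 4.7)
The plan is to prove both bounds. The upper bound $D(\vv,\kk,2) \leq v_2 \lfloor v_1/2\rfloor$ follows from Proposition~\ref{211-bound}: in the regime $v_2 < v_1 \leq 2v_2 \leq 2v_3$, a short comparison of the three terms in the minimum shows that $v_2 \lfloor v_1/2\rfloor$ is the binding one (both $\binom{v_1}{2}$ and $v_2 v_3$ are at least this large, using $v_2 < v_1$ and $v_2 \leq v_3$ respectively). For the matching lower bound I will construct a $v_2 \times v_3$ array satisfying conditions (i)--(iv) of Lemma~\ref{array} with exactly $v_2 \lfloor v_1/2\rfloor$ filled cells; it suffices to handle $v_3 = v_2$ and then append $v_3 - v_2$ empty columns, which preserves all four of the Lemma~\ref{array} conditions.

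The construction starts from a Howell design $H(v_2, \tv_1)$ on the symbol set $\tx_1$. The first step is to verify $v_2 + 1 \leq \tv_1 \leq 2v_2$: the left inequality holds because $v_2 < v_1 \leq \tv_1$, and the right because $v_1 \leq 2v_2$ (and when $v_1$ is odd, $v_1 \leq 2v_2 - 1$ since $2v_2$ is even, so $\tv_1 = v_1 + 1 \leq 2v_2$). The next step is to match the listed exceptions: the six pairs $(v_1,v_2) \in \{(3,2),(4,2),(4,3),(6,5),(7,5),(8,5)\}$ correspond exactly to the forbidden Howell parameters $(v_2,\tv_1) \in \{(2,4),(3,4),(5,6),(5,8)\}$ from Theorem~\ref{howell} (the pair $(v_2,\tv_1)=(5,8)$ arising from both $v_1=7$ and $v_1=8$, and $(2,4)$ from both $v_1=3$ and $v_1=4$). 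Outside these exceptions, an $H(v_2,\tv_1)$ therefore exists.

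Given such a Howell design $A'$, I obtain the desired array $A$ by taking $A = A'$ when $v_1$ is even, and by emptying each cell of $A'$ that contains the auxiliary symbol $\infty$ when $v_1$ is odd. The three defining Howell properties are clearly inherited by $A$ after such deletion, so $A$ satisfies conditions (i)--(iv) of Lemma~\ref{array}. The count of filled cells works out as required: $H(v_2,\tv_1)$ has $v_2 \cdot \tv_1/2$ filled cells, and if $v_1$ is odd then exactly $v_2$ cells (one per row, since $\infty$ appears once in each row) are emptied, leaving
\[ \tfrac{v_2(v_1+1)}{2} - v_2 = \tfrac{v_2(v_1-1)}{2} = v_2 \lfloor v_1/2\rfloor \]
filled cells, while the even case gives $v_2 \cdot v_1/2 = v_2 \lfloor v_1/2 \rfloor$ directly. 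Applying Lemma~\ref{array} then produces a generalized packing of exactly the required size.

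The only real obstacle in this plan is the bookkeeping check that the lemma's six exceptional pairs $(v_1,v_2)$ line up precisely with the four forbidden Howell parameters $(v_2,\tv_1)$, and that the parity analysis forcing $\tv_1 \leq 2v_2$ holds in the odd case; once these are verified the argument is essentially an invocation of Theorem~\ref{howell} together with Lemma~\ref{array}, with no further combinatorial input needed.
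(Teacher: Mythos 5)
Your proposal is correct and follows essentially the same route as the paper's proof: upper bound from Proposition~\ref{211-bound}, then construct a Howell design $H(v_2,\tv_1)$, delete the cells containing $\infty$ when $v_1$ is odd, and append $v_3-v_2$ empty columns to obtain an array of the required size via Lemma~\ref{array}. Your explicit verification that $v_2+1\leq\tv_1\leq 2v_2$ and that the six excluded pairs $(v_1,v_2)$ match exactly the four forbidden Howell parameters is bookkeeping the paper leaves implicit, and it checks out.
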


\begin{proof}
By Proposition~\ref{211-bound}, we know that $D(\vv,\kk,2) \leq v_2 \lfloor v_1/2 \rfloor$, and so we need only show the existence of a packing of this size.  Construct a Howell design $H(v_2,\tv_1)$, with symbols in $\tx_1$ and append $v_3-v_2$ empty columns.  If $v_1$ is odd, remove from this array the entries in any cell containing $\infty$, noting that there is one such cell in each row, so that the total number of filled positions is now $(\tv_1/2)-1 = \lfloor v_1/2 \rfloor$. The resulting $v_2 \times v_3$ array $A$ has entries in $X_1$, and the number of filled cells is $v_2 \lfloor v_1/2 \rfloor$, and so we have a $2$-$(\vv,\kk,1)$ packing of the desired size.
\end{proof}

The nonexistence of Howell designs $H(2,4)$, $H(3,4)$, $H(5,6)$ and $H(5,8)$ means that we have not yet constructed maximum packings in the following cases: $\vv=(3,2,v_3)$, where $v_3 \geq 2$; $\vv=(4,2,v_3)$, where $v_3 \geq 2$; $\vv=(4,3,v_3)$, where $v_3 \geq 3$; $\vv = (6,5,v_3)$, where $v_3 \geq 5$; $\vv=(7,5,v_3)$, where $v_3 \geq 5$; and $\vv=(8,5,v_3)$, where $v_3 \geq 5$.  The bound of Proposition~\ref{211-bound} projects maximum packings in these cases of sizes 2, 4, 6, 15, 15, and 20, respectively.  We deal with these cases in the following lemmas.
The first two find the packing number in the cases that $\vv=(3,2,v_3)$ or $\vv=(4,2,v_3)$; their proofs are straightforward, and so are omitted.

\begin{lemma}
Let $\vv=(3,2, v_3)$, where $v_3 \geq 2$.  Then $D(\vv,\kk,2)  = 2$.
\end{lemma}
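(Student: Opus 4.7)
The plan is to combine the upper bound from Proposition~\ref{211-bound} with an explicit two-block construction. For the upper bound, I would substitute $v_1=3$ and $v_2=2$ into Proposition~\ref{211-bound}. Since $v_3 \geq 2$, this gives
\[
D(\vv,\kk,2) \;\leq\; \min\left\{ \binom{3}{2},\; 2 \left\lfloor \tfrac{3}{2} \right\rfloor,\; 2 v_3 \right\} \;=\; \min\{3,\,2,\,2v_3\} \;=\; 2.
\]
Note that the binding term is the middle one, coming from the $(1,1,0)$-admissible vector, which already forces the answer to be at most $2$.

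To match this bound, I would exhibit two blocks that form a valid $2$-$(\vv,\kk,1)$ generalized packing. A convenient choice is
\[
\bigl(\{1,2\},\,\{1\},\,\{1\}\bigr) \quad\text{and}\quad \bigl(\{1,3\},\,\{2\},\,\{2\}\bigr),
\]
using the labels $\{1,2,3\}$ for $X_1$, $\{1,2\}$ for $X_2$, and any two labels for $X_3$. The four $(\kk,2)$-admissible vectors are $(2,0,0)$, $(1,1,0)$, $(1,0,1)$ and $(0,1,1)$. For $\tt=(2,0,0)$ the pairs $\{1,2\}$ and $\{1,3\}$ in $X_1$ are distinct; for each of the three mixed vectors any cross-coordinate pair appearing in the first block differs from those in the second because the $X_2$- and $X_3$-coordinates both change between the two blocks. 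So no admissible $\TT$ is contained in both blocks.

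There is essentially no obstacle in this case: the statement is included to keep the enumeration over small exceptional vectors complete, and the verification of the construction is a finite check over four admissible vectors. The only point worth flagging is that the bound from Proposition~\ref{211-bound} is binding through its second term $v_2\lfloor v_1/2\rfloor$ rather than the more obvious Johnson--Sch\"onheim-type term $\binom{v_1}{2}$, which is what prevents us from realizing all three pairs from $\{1,2,3\}$ simultaneously.
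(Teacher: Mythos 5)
Your proof is correct and is exactly the argument the paper intends: the authors omit this proof as ``straightforward,'' and the intended route is precisely the one you give, namely the bound of Proposition~\ref{211-bound} (binding through the term $v_2\lfloor v_1/2\rfloor = 2$) together with a two-block construction whose validity is a finite check over the four admissible vectors. No issues.
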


\begin{lemma} 
Let $\vv=(4,2,v_3)$, where $v_3 \geq 2$.  Then 
$$
D(\vv,\kk,2) = 
\left\{
\begin{array}{ll}
2, & \mbox{\textrm if } v_3=2 \\
3, & \mbox{\textrm if } v_3=3 \\
4, & \mbox{\textrm if } v_3 \geq 4.
\end{array}
\right.
$$
\end{lemma}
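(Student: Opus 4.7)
The plan is to apply Proposition~\ref{211-bound} to obtain the uniform upper bound $D(\vv,\kk,2)\leq \min\{6,4,2v_3\}=4$ for all $v_3\geq 2$, and then refine this bound in the cases $v_3=2$ and $v_3=3$ by exploiting the fact that $v_1=4$ forces each filled row (and column) to use at most two cells. The constructions in the achievable cases will follow from small, explicit arrays via Lemma~\ref{array}.

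For $v_3\geq 4$, I would simply exhibit a $2\times v_3$ array with four filled cells, for instance placing $\{1,2\}$ and $\{3,4\}$ in the first row (columns 1 and 2) and $\{1,3\}$ and $\{2,4\}$ in the second row (columns 3 and 4), and then adding empty columns as needed; this trivially satisfies all three array conditions, so $D(\vv,\kk,2)=4$.

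The main work is ruling out four (respectively three) filled cells for $v_3=3$ (respectively $v_3=2$). For $v_3=3$, the key observation is that any row with two filled cells contains $2+2=4$ symbol occurrences from $\{1,2,3,4\}$ and hence must be a partition of that 4-set into two complementary pairs; likewise for columns with two filled cells. If four cells were filled in a $2\times 3$ array, then each row would have exactly two filled cells (three in a single row is impossible) and would therefore be a pair-partition. By pigeonhole some column $c$ would carry cells from both rows, with row~1 entry $\{a,b\}$ and row~2 entry $\{e,f\}$ disjoint, so $\{a,b,e,f\}=\{1,2,3,4\}$. Since each row is a partition of $\{1,2,3,4\}$, the other filled cell of row~1 must equal $\{e,f\}$ and the other filled cell of row~2 must equal $\{a,b\}$, producing two occurrences of the pair $\{a,b\}$ (and of $\{e,f\}$), contradicting condition~(iv) of Lemma~\ref{array}. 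Hence at most three cells may be filled, and placing $\{1,2\},\{3,4\}$ in row~1 (columns 1 and 2) together with $\{1,3\}$ in row~2 column~3 realizes the bound.

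For $v_3=2$, a similar partition argument handles the case of three filled cells: without loss of generality row~1 has two filled cells forming a partition $\{A,B\}$ of $\{1,2,3,4\}$ with $A$ in column~1 and $B$ in column~2, while the unique cell in row~2 lies in one of the two columns and, because it cannot repeat a symbol in that column, must consist of symbols drawn from the complementary pair, forcing it to equal $B$ (or $A$). This duplicates a pair, contradicting condition~(iv). Thus only two cells may be filled, and $\{1,2\}$ in cell $(1,1)$ together with $\{3,4\}$ in cell $(2,2)$ shows that the value~2 is attained. The main (mild) obstacle throughout is being careful that the ``at most two cells per row'' observation and the pigeonhole step combine to force a \emph{pair} repetition rather than merely a symbol repetition, since it is precisely the pair condition that drives the bounds below the naive value of~4.
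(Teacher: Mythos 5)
Your proof is correct. The paper actually omits the proof of this lemma entirely, remarking only that it is ``straightforward''; your argument --- the upper bound of $4$ from Proposition~\ref{211-bound}, explicit $2\times v_3$ arrays via Lemma~\ref{array} for the constructions, and the observation that a row with two filled cells must partition the $4$-set into complementary pairs (so that a doubly-filled column forces a repeated pair, violating condition~(iv)) --- is exactly the kind of elementary case analysis the authors had in mind, and it correctly establishes all three values.
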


\begin{lemma}
Let $\vv=(4,3,v_3)$, where $v_3 \geq 3$.  Then 
$$
D(\vv,\kk,2) = 
\left\{
\begin{array}{ll}
4, & \mbox{\textrm if } v_3=3,4 \\
5, & \mbox{if } v_3=5 \\
6, & \mbox{if } v_3 \geq 6.
\end{array}
\right.
$$
\end{lemma}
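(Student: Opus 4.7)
The plan is to combine the general upper bound from Proposition~\ref{211-bound} with a case analysis on $v_3$, exhibiting explicit constructions for the achievable values and sharpening the bound where $6$ cannot be reached. Proposition~\ref{211-bound} gives
\[
D(\vv,\kk,2)\le\min\left\{\binom{4}{2},\,3\lfloor 4/2\rfloor,\,3v_3\right\}=\min\{6,6,3v_3\},
\]
which equals $6$ for every $v_3\ge 2$, so the only questions are when this value is actually attained and, otherwise, what the true maximum is.

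For $v_3\ge 6$ the bound is met by the packing that assigns the three perfect matchings of $\{1,2,3,4\}$, namely $\{\{1,2\},\{3,4\}\}$, $\{\{1,3\},\{2,4\}\}$ and $\{\{1,4\},\{2,3\}\}$, to the three rows of the $3\times v_3$ array of Lemma~\ref{array}, placing the six pairs in six distinct columns. Since $D(v_1,k_1,2)=D(4,2,2)=6$, Proposition~\ref{increase_v} then extends this packing from $v_3=6$ to any larger $v_3$.

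For the remaining cases $v_3\in\{3,4,5\}$ I will work in the array picture of Lemma~\ref{array} and exploit the fact that $K_4$ has exactly three perfect matchings $M_A,M_B,M_C$. The key structural observation is that any row containing two filled cells must contain two disjoint pairs from $\{1,2,3,4\}$, which together form one of these three matchings; the same holds for columns. Two further constraints follow: a given matching $M$ can be realised by at most one row, since otherwise the two pairs of $M$ would each appear in two different blocks, and if $M$ is realised by both a row $R$ and a column $C$, then both pairs of $M$ would have to occupy the single cell $(R,C)$, which is again impossible. Letting $a$ and $c$ denote respectively the numbers of rows and columns with two filled cells, these facts yield
\[
a+c\le 3,\qquad b\ge 2a+2c,\qquad b\le a+v_2=a+3,\qquad b\le c+v_3,
\]
where $b$ is the total number of blocks. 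A short calculation then rules out $b=5$ when $v_3\in\{3,4\}$ and $b=6$ when $v_3=5$: for example, $v_3=4$ and $b=5$ would force $a\ge 2$, $c\ge 1$ and $2a+2c\le 5$, whence $a+c\le 2$, contradicting $a+c\ge 3$.

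The matching lower-bound constructions are routine. For $v_3=3$, place $M_A$ in the first row (cells $(1,1),(1,2)$) and $M_B$ in the third column (cells $(2,3),(3,3)$) to obtain a packing of size $4$. For $v_3=4$, place $M_A$ in row $1$ (cells $(1,1),(1,2)$) and $M_B$ in row $2$ (cells $(2,3),(2,4)$), again of size $4$; and for $v_3=5$ add one further block using a pair of $M_C$ in cell $(3,5)$, reaching size $5$. Explicit arrays can be placed in the appendix. The main technical obstacle is the bookkeeping in the upper-bound case analysis: the individual per-side bounds $b\le a+3$ and $b\le c+v_3$ are not enough on their own in configurations such as $(a,c)=(2,1)$ for $v_3=4$, and it is the lower bound $b\ge 2a+2c$ coming from the matching structure that closes the gap.
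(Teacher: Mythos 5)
Your proof is correct and follows essentially the same route as the paper: the upper bound of $6$ from Proposition~\ref{211-bound}, the array picture of Lemma~\ref{array}, the observation that a doubly filled row or column must carry one of the three perfect matchings of $K_4$ (each usable by at most one line, and never by both a row and a column), and explicit small arrays for the lower bounds. Your inequality system $a+c\le 3$, $b\ge 2a+2c$, $b\le a+3$, $b\le c+v_3$ is a cleaner, uniform packaging of the paper's ad hoc argument for $v_3=5$ and its ``similar arguments'' remark for $v_3\le 4$; the only step worth spelling out is why $b\ge 2a+2c$ holds, namely that no cell can lie in both a doubly filled row and a doubly filled column (otherwise that row and that column would realise the same matching), so the $2a$ cells in doubly filled rows and the $2c$ cells in doubly filled columns are disjoint.
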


\begin{proof}
Proposition~\ref{211-bound} asserts that $D(\vv,\kk,2) \leq 6$.  If $v_3 \geq 6$, then it suffices to find a packing of size 6 in the case that $\vv=(4,3,6)$; such a packing can be found in Example~\ref{ex:436} of Appendix~\ref{section:211appendix}.

Now suppose that $v_3 =5$.  We must form a $3 \times 5$ array on an alphabet of size 4.  Note that each row can contain at most two entries, so six entries are only possible if each row contains exactly two non-empty cells; in this case, there must be a column which contains two non-empty cells.  However, if the $(i_1, j)$ and $(i_2,j)$ cells are both non-empty, then neither row $i_1$ nor row $i_2$ can contain two non-empty cells, as the only pair disjoint from the $(i_1,j)$-entry already appears in the $(i_2,j)$-entry, and vice-versa.  Thus, no packing of size 6 exists.  A packing of size 5 may be found in Appendix~\ref{section:211appendix}, Example~\ref{ex:435}. 

Similar arguments show that if $v_3 \leq 4$, then there can be no packing of size 5.  However, a packing of size 4 does exist if $\vv=(4,3,3)$ (see Appendix~\ref{section:211appendix}, Example~\ref{ex:433}); this array also forms a packing with $\vv=(4,3,4)$.
\end{proof}

\begin{lemma} \label{lemma:65v3}
Let $\vv=(6,5,v_3)$, where $v_3 \geq 5$.  Then 
$$
D(\vv,\kk,2) = 
\left\{
\begin{array}{ll}
13, & \mbox{if } v_3=5 \\
15, & \mbox{if } v_3 \geq 6.
\end{array}
\right.
$$
\end{lemma}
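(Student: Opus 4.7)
The plan is to apply Proposition~\ref{211-bound} to obtain the common upper bound $D(\vv,\kk,2)\le\min\{\binom{6}{2},\,5\lfloor 6/2\rfloor,\,5v_3\}=15$ in both cases, and then to supply matching constructions together with an improved upper bound when $v_3=5$.

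For $v_3\ge 6$ it suffices, by appending $v_3-6$ empty columns, to exhibit a $2$-$((6,5,6),\kk,1)$ packing of size 15: a $5\times 6$ array on the symbol set $\{1,\dots,6\}$ in which each of the 15 pairs appears exactly once and each symbol occurs at most once per row and per column. I would place such an array in the appendix. For the lower bound when $v_3=5$, I would likewise supply in the appendix an explicit $5\times 5$ array with 13 filled cells.

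The only substantive work is the upper bound $D((6,5,5),\kk,2)\le 13$. Size 15 is immediate, since such an array would be a Howell design $H(5,6)$, which does not exist by Theorem~\ref{howell}. To rule out size 14, note that each row (column) has at most $\lfloor 6/2\rfloor=3$ filled cells, so a hypothetical 14-cell array has exactly four \emph{full} rows of three pairs and one \emph{partial} row of two pairs, with the same pattern for columns. The four full rows form four edge-disjoint 1-factors of $K_6$; each vertex has degree $5-4=1$ in the complement, so the remaining three edges of $K_6$ also form a 1-factor, two of whose edges lie in the partial row. Thus the single missing pair $\{a,b\}$ is the third edge of that 1-factor, and its symbols are precisely the two symbols absent from the partial row. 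A parallel symbol-frequency count from the column side shows the partial column is also missing exactly $a$ and $b$.

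The main obstacle is the concluding two-case analysis on the intersection cell $(r^*,c^*)$ of the partial row and partial column. In every full row and every full column both $a$ and $b$ are already placed, so the only cell that can legally receive $\{a,b\}$ is $(r^*,c^*)$. If $(r^*,c^*)$ is empty, placing $\{a,b\}$ there yields an $H(5,6)$, contradicting Theorem~\ref{howell}. If $(r^*,c^*)$ is instead filled with some pair $\{x,y\}$, then the support condition on the partial row forces the other filled cell of the partial row to contain the complementary pair $\{u,v\}=\{1,\dots,6\}\setminus\{a,b,x,y\}$; the symmetric argument on the partial column forces its other filled cell to contain the same pair $\{u,v\}$, contradicting pair distinctness. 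Hence no 14-cell packing exists, and $D((6,5,5),\kk,2)=13$ as claimed.
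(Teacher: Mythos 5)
Your proposal is correct and follows essentially the same route as the paper: the upper bound of 15 from Proposition~\ref{211-bound}, the non-existence of an $H(5,6)$ to rule out 15 when $v_3=5$, explicit arrays (deferred to an appendix) of sizes 13 and 15 for $\vv=(6,5,5)$ and $(6,5,6)$, and a structural argument showing a 14-cell array would force either a completable $H(5,6)$ or a repeated pair. Your treatment of the size-14 case is in fact a slightly more careful write-up of the paper's argument (due to Stinson), since you derive rather than merely assert that the partial row and partial column each miss exactly the two symbols of the unique absent pair.
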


\begin{proof}
The upper bound asserted by Proposition~\ref{211-bound} is 15.  %If $v_3=5$, then the existence of a packing of size 15 would imply that there exists a $H(5,6)$, in contradiction to Theorem~\ref{howell}.  
If $v_3=5$, a packing of size 15 would be equivalent to a Howell design $H(5,6)$, which does not exist by Theorem~\ref{howell}.  A packing of size 13 is given in Appendix~\ref{section:211appendix}, Example~\ref{ex:655}.  Furthermore, as we now show, no packing of size 14 exists, as this would also imply the existence of an $H(5,6)$; this argument is due to Stinson (personal communication).

Suppose we have a packing of size 14, using the alphabet $\{1,2,\ldots,6\}$, and suppose without loss of generality that $\{5,6\}$ is the missing pair.  Now, as a $5\times 5$ array, we can assume (also without loss of generality) that the first four rows and columns each contain three pairs, and the final row and column each contain two pairs.  The symbols missing from the last row must be 5 and 6, and similarly the symbols missing from the last column must also be 5 and 6.  Now, if the cell in the bottom right corner is empty, then we can fill in the pair $\{5,6\}$ and we would obtain an $H(5,6)$. So we assume that this cell is already filled, and (without loss of generality) that it contains the pair $\{1,2\}$. Then the pair $\{3,4\}$ must already occur in the last row, and this pair must also occur in the last column. But the pair cannot occur twice, so we have a contradiction.

For $v_3 \geq 6$, it suffices to construct a packing of size 15 for $\vv=(6,5,6)$; an example of such a packing may be found in Appendix~\ref{section:211appendix}, Example~\ref{ex:656}.  
\end{proof}

\begin{lemma} \label{lemma:75v3}
Let $\vv=(7,5,v_3)$, where $v_3 \geq 5$.  Then $D(\vv,\kk,2) = 15$.
\end{lemma}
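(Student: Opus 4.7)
The plan is to handle the upper and lower bounds separately. The upper bound $D(\vv,\kk,2) \leq 15$ is immediate from Proposition~\ref{211-bound}, which gives
$$D(\vv,\kk,2) \leq \min\left\{\binom{7}{2},\ 5\left\lfloor\tfrac{7}{2}\right\rfloor,\ 5 v_3 \right\} = \min\{21,\ 15,\ 5 v_3\} = 15$$
whenever $v_3 \geq 5$. For the matching lower bound, I would observe that any $2$-$((7,5,5),\kk,1)$ packing of size 15 extends to a $2$-$((7,5,v_3),\kk,1)$ packing of the same size for every $v_3 \geq 5$ by simply leaving the extra $v_3 - 5$ elements of $X_3$ unused in every block. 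Thus it suffices to handle the case $v_3 = 5$.

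By Lemma~\ref{array}, this reduces to producing a $5 \times 5$ array $A$ whose entries are unordered pairs from $X_1 = \{1, \ldots, 7\}$, with exactly 15 non-empty cells, such that no symbol is repeated within a row or column and no pair of symbols appears in more than one cell. A simple counting argument forces every row and every column of $A$ to contain exactly 3 pairs. The default strategy used in the proof of Lemma~\ref{howell_211}---namely, starting with a Howell design $H(v_2, \tv_1) = H(5, 8)$ on $X_1 \cup \{\infty\}$ and deleting the $\infty$-entries---is unavailable here, because $H(5, 8)$ is one of the four exceptional non-existent Howell designs listed in Theorem~\ref{howell}. This is precisely the reason the vector $(v_1,v_2)=(7,5)$ appears in the exclusion list of Lemma~\ref{howell_211}.

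The plan is therefore to provide an explicit ad hoc $5 \times 5$ array with the required properties, to be recorded in the appendix, and to invoke it directly. The main obstacle is that the naive analogous construction on only 6 symbols is also forbidden: such an array would be precisely an $H(5, 6)$, which again fails to exist by Theorem~\ref{howell}. Hence the seventh symbol must participate essentially in the construction; equivalently, the symbol-multiplicity vector $(a_1, \ldots, a_7)$ satisfying $\sum a_i = 30$ and $a_i \leq 5$ cannot be a permutation of $(5,5,5,5,5,5,0)$. Once an admissible multiplicity profile is chosen (for example, with one symbol appearing only a small number of times), a short direct search yields a suitable $5 \times 5$ array, after which verification of the row, column, and pair constraints is routine.
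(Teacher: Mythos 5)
Your approach matches the paper's exactly: the upper bound of $15$ follows from Proposition~\ref{211-bound}, and the lower bound is witnessed by an explicit $5\times 5$ array for $\vv=(7,5,5)$ (the paper records one as Example~\ref{ex:755} in Appendix~\ref{section:211appendix}), with larger $v_3$ handled by appending empty columns. Your structural observations --- that each row and column must contain exactly three pairs, and that the seventh symbol must genuinely appear because an array omitting it would be a Howell design $H(5,6)$, which does not exist --- are correct; the only outstanding item is to actually exhibit the array rather than assert that a search finds one (the paper's example has symbol multiplicities $(4,5,5,5,4,5,2)$).
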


\begin{proof}
By Proposition~\ref{211-bound}, we have that $D(\vv,\kk,2) \leq 15$.  An example of a $2$-$(\vv,\kk,1)$ packing of size 15 with $\vv=(7,5,5)$ may be found in  Appendix~\ref{section:211appendix}, Example~\ref{ex:755}.
\end{proof}

\begin{lemma} \label{lemma:85v3}
Let $\vv=(8,5,v_3)$, where $v_3 \geq 5$, and $\kk=(2,1,1)$.  Then
$$
D(\vv,\kk,2) = 
\left\{
\begin{array}{ll}
19, & \mbox{if } v_3=5 \\
20, & \mbox{if } v_3\geq 6.
\end{array}
\right.
$$
\end{lemma}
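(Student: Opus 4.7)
The plan is to apply Proposition~\ref{211-bound} to get the upper bound of $20$, and then split on $v_3$. For $v_3=5$ the obstruction to $20$ is the nonexistence of a Howell design $H(5,8)$, while for $v_3\geq 6$ we need only exhibit a packing of size $20$ on a $5\times 6$ array.

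First I would observe that Proposition~\ref{211-bound} gives
\[
D(\vv,\kk,2)\leq \min\left\{\binom{8}{2},\, 5\left\lfloor 8/2\right\rfloor,\, 5v_3\right\}= \min\{28,20,5v_3\}=20
\]
for every $v_3\geq 5$. Thus in both cases we must produce a packing whose size matches the claim, and in the case $v_3=5$ we must in addition improve the upper bound by one.

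For the case $v_3=5$, suppose for contradiction that a $2$-$(\vv,\kk,1)$ packing of size $20$ existed. By Lemma~\ref{array} this corresponds to a $5\times 5$ array on the alphabet $X_1$ of size $8$ with $20$ non-empty cells, every row and column containing at most each symbol once, and each pair at most once. With $20$ of the $25$ cells filled, each row has exactly $4$ filled cells (and similarly each column), accounting for $8$ symbol occurrences per row; as only $8$ symbols are available and each may appear at most once per row, every symbol must appear \emph{exactly} once in each row. The same argument applied column-wise shows each symbol appears exactly once in each column, so the array is a Howell design $H(5,8)$. But Theorem~\ref{howell} asserts that no $H(5,8)$ exists, giving $D(\vv,\kk,2)\leq 19$. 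A packing of size $19$ achieving this bound is exhibited in Appendix~\ref{section:211appendix}.

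For the case $v_3\geq 6$, it suffices (by appending $v_3-6$ empty columns) to construct a $2$-$(\vv,\kk,1)$ packing of size $20$ when $\vv=(8,5,6)$, i.e.\ a $5\times 6$ array on $8$ symbols with $20$ filled cells meeting the conditions of Lemma~\ref{array}. An explicit construction is provided in the Appendix. The main obstacle is the case $v_3=5$: the combinatorial reduction to $H(5,8)$ is the heart of the argument and requires the counting observation that the only way to fill $20$ cells of a $5\times 5$ array using an $8$-symbol alphabet (with the given row/column restrictions) is to force each symbol into every row and column, so that our packing becomes a genuine Howell design.
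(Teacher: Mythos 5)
Your proof is correct and follows essentially the same route as the paper: the upper bound of $20$ from Proposition~\ref{211-bound}, the observation that a size-$20$ packing with $v_3=5$ would force a Howell design $H(5,8)$ (which Theorem~\ref{howell} rules out), and explicit constructions of sizes $19$ and $20$ from the Appendix. The only difference is that you spell out the counting argument (at most four filled cells per row, hence exactly four, hence every symbol in every row and column) that the paper leaves implicit.
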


\begin{proof}
Proposition~\ref{211-bound} gives us an upper bound of 20.  If $v_3=5$, however, then there is no packing of size 20, as otherwise, there would exist an $H(8,5)$.  A packing of size 19 is given in Appendix~\ref{section:211appendix}, Example~\ref{ex:855}.

If $v_3 \geq 6$, then it suffices to find a packing of size 20 for $\vv=(8,5,6)$; an example may be found in Appendix~\ref{section:211appendix}, Example~\ref{ex:856}. 
\end{proof}

Having dealt with those exceptions, we move on to consider the case where $2v_2 < v_1 \leq 2v_3$.

\begin{lemma} \label{soma_211}
Let $\vv=(v_1,v_2,v_3)$, where $2v_2 < v_1 \leq 2v_3$.  Then $D(\vv,\kk,2) = v_2 \lfloor v_1/2 \rfloor$.
\end{lemma}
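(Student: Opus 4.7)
The plan is to combine the upper bound from Proposition~\ref{211-bound} with an explicit construction built from a Howell design of type $\soma(2,n) = H(n,2n)$.

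For the upper bound, the hypotheses $v_2 < v_1/2$ and $v_1/2 \leq v_3$ give $v_2 \lfloor v_1/2 \rfloor \leq v_2 v_3$ and $v_2 \lfloor v_1/2 \rfloor < (v_1/2)^2 \leq \binom{v_1}{2}$ (valid for $v_1 \geq 2$), so the minimum in Proposition~\ref{211-bound} is exactly $v_2 \lfloor v_1/2 \rfloor$.

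For the construction, set $n = \tv_1/2$ and note that the hypothesis $2v_2 < v_1$ forces $v_2 \leq n-1$. By Theorem~\ref{howell}, the Howell design $H(n,2n)$ exists whenever $n \geq 3$---the only exception of the form $H(n,2n)$ in that list is $H(2,4)$---so for every $v_1 \geq 5$ we may take the first $v_2$ rows of an $H(n,2n)$ on symbol set $\tx_1$. When $v_1$ is odd, we delete every pair containing $\infty$, which removes exactly one entry from each of the $v_2$ rows and leaves $n-1 = \lfloor v_1/2 \rfloor$ filled cells per row on symbols from $X_1$; when $v_1$ is even, every entry is already a pair from $X_1$ and each row has $n = \lfloor v_1/2 \rfloor$ filled cells. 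Since $v_3 \geq \lceil v_1/2 \rceil = n$, we append $v_3 - n$ empty columns to obtain a $v_2 \times v_3$ array. The properties required by Lemma~\ref{array} transfer directly from the Howell design (deletion of cells cannot create a repeated symbol or a repeated pair), producing a generalized packing of the required size $v_2 \lfloor v_1/2 \rfloor$.

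The remaining cases $v_1 \in \{3,4\}$ are not covered by the construction because $H(2,4)$ does not exist, but the hypothesis $2v_2 < v_1$ then forces $v_2 = 1$, so a direct check suffices: for $v_1 = 3$ the single block $(\{1,2\},\{1\},\{1\})$ gives a packing of size $1$, while for $v_1 = 4$ the two blocks $(\{1,2\},\{1\},\{1\})$ and $(\{3,4\},\{1\},\{2\})$ give a packing of size $2$. The main obstacle is essentially bookkeeping: one must verify that the single relevant Howell exception $H(2,4)$ corresponds exactly to the two small cases handled separately, and that the other exceptions $H(3,4)$, $H(5,6)$, $H(5,8)$ listed in Theorem~\ref{howell} are not of the form $H(n,2n)$ and hence do not obstruct the general construction.
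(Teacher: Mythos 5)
Your proposal is correct and follows essentially the same route as the paper: take the first $v_2$ rows of a $\soma(2,\tv_1/2)=H(\tv_1/2,\tv_1)$, delete the cells containing $\infty$ when $v_1$ is odd, append empty columns, and treat the small values of $v_1$ (where $v_2$ is forced to equal $1$) by hand. Your bookkeeping is in fact slightly more careful than the paper's, which asserts $v_1\geq 4$ and only singles out $v_1=4$, whereas $2v_2<v_1$ only forces $v_1\geq 3$ and the case $v_1=3$ (trivial, as you note) also falls outside the SOMA construction.
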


\begin{proof}
Proposition~\ref{211-bound} asserts that $D(\vv,\kk,2) \leq v_2 \lfloor v_1/2 \rfloor$, and so it suffices to find a packing of this size.  

Note that the condition $2v_2 < v_1$ implies that $v_1 \geq 4$.  Let us first suppose that $v_1 \neq 4$.  Then there exists a $\soma(2,\tv_1/2)$ with symbol set $\tx_1$.  Let $A$ be the array formed by taking the first $v_2$ rows of the $\soma$ and appending $v_3-v_1/2$ empty columns.  Note that each row of $A$ contains $\lceil v_1/2 \rceil$ nonempty cells.  Now, if $v_1$ is odd, delete from $A$ the entries in any cell containing $\infty$.  We obtain a $v_2 \times v_3$ array on symbol set $X_1$, with $v_2 \lfloor v_1/2 \rfloor$ nonempty cells, which gives the desired $2$-$(\vv,\kk,1)$ packing.

If $v_1=4$, then no $\mathrm{SOMA}(2,\tv_1/2)$ exists.  The condition $2v_2 < v_1 \leq 2v_3$ implies that $v_2=1$ and $v_3 \geq 2$.  We seek a $2$-$(\vv,\kk,1)$ packing of size 2, which is trivial to find.
\end{proof}

The final case is that $v_1 > 2v_3$.  

\begin{lemma}
Let $\vv=(v_1,v_2,v_3)$, where $v_1 > 2v_3$ and $v_2 \leq v_3$.  If $v_3 \neq 2$, then $D(\vv,\kk,2) = v_2 v_3$.
\end{lemma}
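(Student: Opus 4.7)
The plan is in two parts: establish the upper bound from earlier results, then build a packing that attains it.

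The upper bound is immediate from Proposition~\ref{211-bound}: I need only check that among $\binom{v_1}{2}$, $v_2\lfloor v_1/2\rfloor$, and $v_2 v_3$, the minimum is $v_2 v_3$. Since $v_1 > 2v_3$ gives $v_1 \geq 2v_3+1$ and hence $\lfloor v_1/2\rfloor \geq v_3$, we have $v_2\lfloor v_1/2\rfloor \geq v_2 v_3$; and $\binom{v_1}{2} \geq \binom{2v_3+1}{2} = v_3(2v_3+1) \geq v_3^2 \geq v_2 v_3$. So $D(\vv,\kk,2)\le v_2 v_3$.

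For the matching construction, I will use Lemma~\ref{array}: it suffices to exhibit a $v_2\times v_3$ array whose $v_2 v_3$ cells are all filled with pairs from $X_1$, such that every symbol appears at most once per row and column and every pair appears in at most one cell. The natural candidate is a $\soma(2,v_3)$, that is, a Howell design $H(v_3,2v_3)$. Since $v_1>2v_3$, I can choose a $2v_3$-subset $Y\subseteq X_1$ and construct $H(v_3,2v_3)$ on the symbol set $Y$. By Theorem~\ref{howell}, $H(s,2n)$ exists for $s+1\leq 2n\leq 2s$ apart from the exceptions $(2,4), (3,4), (5,6), (5,8)$; a direct check shows that the family $(s,2n)=(v_3,2v_3)$ meets this list only at $v_3=2$, which is exactly the value excluded by the statement.

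Taking this Howell design and deleting all but $v_2$ rows (permissible since $v_2\leq v_3$) yields a $v_2\times v_3$ array $A$ on symbol set $Y\subseteq X_1$ with every cell occupied. Each symbol of $Y$ appeared exactly once per row and exactly once per column of the original Howell design, and each pair from $Y$ appeared in at most one cell; these properties are inherited by $A$. The extra $v_1-2v_3$ elements of $X_1\setminus Y$ are simply unused. By Lemma~\ref{array}, $A$ corresponds to a $2$-$(\vv,\kk,1)$ generalized packing of size $v_2 v_3$, matching the upper bound. The only subtle step is verifying that the hypothesis $v_3\neq 2$ is precisely what is needed to invoke Theorem~\ref{howell}; after that the construction is routine.
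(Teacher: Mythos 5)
Your proposal is correct and follows essentially the same route as the paper: take a $\soma(2,v_3)$ (i.e.\ a Howell design $H(v_3,2v_3)$, which exists precisely because $v_3\neq 2$), keep its first $v_2$ rows, and invoke Lemma~\ref{array} together with the upper bound from Proposition~\ref{211-bound}. Your extra checks---that the bound indeed simplifies to $v_2v_3$ and that $(v_3,2v_3)$ avoids the exceptions of Theorem~\ref{howell} exactly when $v_3\neq 2$---are consistent with the paper's argument.
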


\begin{proof}
By Proposition~\ref{211-bound}, we know that $D(\vv,\kk,2) \leq v_2 v_3$, and so it suffices to show that a packing of this size exists.  Since $v_3 \neq 2$, we may construct a $\soma(2,v_3)$, and let $A$ be the $v_2 \times v_3$ array formed by taking the first $v_2$ rows of the $\soma$.  Note that each of the $v_2 v_3$ cells of $A$ are filled.  The array $A$ contains $2v_3$ distinct symbols, and since $v_1 > 2v_3$, it follows that $A$ is a $2$-$(\vv,\kk,1)$ packing.
\end{proof}

The only exception is the case that $v_3=2$, as there is no $\soma(2,2)$.  Since $v_1 > 2v_3$ and $v_2 \leq v_3$, we have in this case that $v_1 \geq 5$ and $v_2 \leq 2$.

\begin{lemma}
Let $\vv=(v_1, v_2, 2)$, where $v_1 \geq 5$ and $v_2 \leq 2$.  Then
$$
D(\vv,\kk,2) = \left\{
\begin{array}{ll}
2, & \mbox{if } v_2=1 \\
3, & \mbox{if } v_1=5 \mbox{ and } v_2=2 \\
4, & \mbox{otherwise}.
\end{array}
\right.
$$
\end{lemma}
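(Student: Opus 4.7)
The plan is to split into cases according to $v_2 \in \{1, 2\}$, apply Proposition~\ref{211-bound} for the upper bounds, and then construct (or in one case rule out) packings meeting those bounds.

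For $v_2 = 1$, the bound of Proposition~\ref{211-bound} gives $D(\vv,\kk,2) \leq \min\{\binom{v_1}{2},\lfloor v_1/2\rfloor,2\}=2$, since $v_1\geq 5$. I would then exhibit two blocks, one in each column of the $1\times 2$ array, using disjoint pairs (e.g.\ $\{1,2\}$ and $\{3,4\}$); this is clearly admissible.

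For $v_2 = 2$, Proposition~\ref{211-bound} gives $D(\vv,\kk,2) \leq \min\{\binom{v_1}{2}, 2\lfloor v_1/2 \rfloor, 4\}=4$, again using $v_1\geq 5$. When $v_1 \geq 6$, I would construct a full $2\times 2$ array whose four cells contain four distinct pairs satisfying the row/column conditions of Lemma~\ref{array}; for instance, the entries $\{1,2\}, \{3,4\}, \{3,5\}, \{1,6\}$ (arranged so that disjoint pairs sit in the same row and the same column), which uses only symbols in $\{1,\dots,6\} \subseteq X_1$. This gives a packing of size~$4$ matching the bound.

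The main obstacle, and the step that needs a genuine argument rather than a direct construction, is the case $v_1=5$, $v_2=2$, where the claim is that $D = 3$ rather than~$4$. Achievability of $3$ is easy (e.g.\ the three pairs $\{1,2\}, \{3,4\}, \{3,5\}$ placed in cells $(1,1),(1,2),(2,1)$). For the upper bound, I would argue by contradiction: suppose all four cells of the $2\times 2$ array are filled with distinct $2$-subsets of a $5$-set, with each symbol appearing at most once per row and per column. This gives eight symbol-slots distributed among five symbols; since each symbol can occur at most twice (and then only on a single diagonal of the $2\times 2$ array), a count $2x + y = 8$ with $x+y \leq 5$ forces at least three symbols to appear exactly twice. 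By pigeonhole, two of these repeated symbols, say $a$ and $b$, must lie on the same diagonal, so both occur in the two cells of that diagonal. Since cells have size $2$, this forces both of those cells to equal $\{a,b\}$, contradicting the requirement that each pair appears in at most one cell. Hence no packing of size~$4$ exists, and combining with the explicit packing of size~$3$ yields $D(\vv,\kk,2)=3$. Collecting the three cases gives the stated formula.
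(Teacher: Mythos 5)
Your proposal is correct and follows essentially the same route as the paper: the upper bounds come from Proposition~\ref{211-bound}, the size-$2$ and size-$4$ packings are exactly the arrays the paper uses (your $2\times 2$ example is the paper's Example~\ref{ex:622}), and the size-$3$ packing for $(5,2,2)$ is what the paper obtains by deleting one entry of that array. The only difference is that you spell out the nonexistence of a size-$4$ packing for $(5,2,2)$ (via the counting/diagonal pigeonhole argument), which the paper dismisses as ``easy to see''; your argument there is valid.
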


\begin{proof}
By Proposition~\ref{211-bound}, we have that $D(\vv,\kk,2) \leq 2$ if $v_2=1$ and $D(\vv,\kk,2) \leq 4$ if $v_2 = 2$.  If $v_2=1$, then a packing of size 2 is easy to construct.  If $\vv=(5,2,2)$, then it is easy to see that no packing of size 4 exists; a packing of size 3 can be obtained by deleting the $(2,2)$-entry of the array given in Example~\ref{ex:622} of Appendix~\ref{section:211appendix}.  To see that $D(\vv,\kk,2) = 4$ in the remaining case, it suffices to show that $D(\vv,\kk,2)=4$ for $\vv=(6,2,2)$; a $2$-$(\vv,\kk,1)$ packing of size 4 may also be found in Example~\ref{ex:622}.
\end{proof}

%\subsubsection{Summary} \label{section:211-summary}
We conclude this section by combining the results for $\kk=(2,1,1)$ above into the following theorem.

\begin{theorem} \label{thm:2_1_1_summary}
Suppose $\vv=(v_1,v_2,v_3)$ and $\kk=(2,1,1)$, where $v_2\leq v_3$.  Then, except for the specific values listed in Table~\ref{table:2_1_1_exceptions} below, there exists a $2$-$(\vv,\kk,1)$ generalized packing meeting the bound of Proposition~\ref{211-bound}.
\end{theorem}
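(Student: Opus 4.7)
The plan is to partition the set of admissible vectors $\vv=(v_1,v_2,v_3)$ with $v_2\le v_3$ into four regions according to which of the three terms in the minimum of Proposition~\ref{211-bound} is active, and to apply the dedicated lemma of the section in each region. The regions are: (a) $v_1\le v_2$ (bound $\binom{v_1}{2}$); (b) $v_2<v_1\le 2v_2$ (bound $v_2\lfloor v_1/2\rfloor$, realized by a Howell square $H(v_2,\tv_1)$); (c) $2v_2<v_1\le 2v_3$ (bound $v_2\lfloor v_1/2\rfloor$, realized by truncating a $\soma(2,\tv_1/2)$); (d) $v_1>2v_3$ (bound $v_2v_3$, realized by truncating a $\soma(2,v_3)$). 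These four regions tile the parameter space, so the theorem reduces to verifying each region.

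In region (a), Lemma~\ref{binom_211} achieves the bound for all $v_1\notin\{3,4,5,6\}$ by truncating a Room square $\rs(\tv_1-1)$; the remaining four values of $v_1$ are handled by Lemmas~\ref{lemma:333}, \ref{lemma:444}, \ref{lemma:555}, and~\ref{lemma:666}, which produce exactly the table entries $\vv=(4,4,4)$ and $\vv=(5,5,5)$ as the only failures. In region (b), Lemma~\ref{howell_211} settles every case except those six pairs $(v_1,v_2)\in\{(3,2),(4,2),(4,3),(6,5),(7,5),(8,5)\}$ for which the needed Howell design fails to exist by Theorem~\ref{howell}. Each of these six pairs is then resolved by its own lemma, contributing the remaining exceptional entries to the table. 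In region (c), Lemma~\ref{soma_211} yields the bound via a SOMA except when $v_1=4$, which forces $v_2=1$ and is trivial; no new exceptions arise. In region (d), the final lemma for $v_3\ne 2$ yields the bound; the residual case $v_3=2$ (forcing $v_2\le 2$) is enumerated in the subsequent lemma, contributing the $\vv=(5,2,2)$, $\vv=(4,2,2)$, and $\vv=(4,2,3)$ entries.

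The theorem follows by collation of these lemmas. The main obstacle is not mathematical depth but careful bookkeeping: one must check that the four regions genuinely tile the parameter space (so no vector is missed), that the list of excluded pairs in region (b) is precisely the list for which Theorem~\ref{howell} denies the existence of a Howell design $H(v_2,\tv_1)$, and that each entry of Table~\ref{table:2_1_1_exceptions} is truly an exception rather than an artifact of a suboptimal construction. For the lower bounds at exceptional vectors such as $(4,4,4)$, $(5,5,5)$, $(6,5,5)$ and $(8,5,5)$, one must ensure that the nonexistence arguments already given are airtight; most of these reduce, via Lemma~\ref{array}, to an observation that the putative maximum array would complete a forbidden Howell design, so the argument is short but must be made explicit in each case. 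Once these checks are in place, the theorem is immediate.
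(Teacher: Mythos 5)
Your proposal follows exactly the paper's route: Theorem~\ref{thm:2_1_1_summary} is established there precisely by collating the case lemmas over the same four regions ($v_1\le v_2$; $v_2<v_1\le 2v_2$ via Howell designs; $2v_2<v_1\le 2v_3$ via SOMAs; $v_1>2v_3$), with the same exceptional vectors and the same nonexistence arguments at $(4,4,4)$, $(5,5,5)$, $(6,5,5)$ and $(8,5,5)$. One small bookkeeping slip: $(4,2,2)$ and $(4,2,3)$ satisfy $v_1\le 2v_3$, so they arise from the missing Howell design $H(2,4)$ in your region (b), not from the $v_3=2$ lemma in region (d); since your region (b) discussion already accounts for them, this does not affect correctness.
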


\begin{table}[hbt]
\centering
\renewcommand{\arraystretch}{1.2}
\begin{tabular}{|c|c|c|} \hline
$\vv=(v_1,v_2,v_3)$  & Projected bound & Packing number \\ \hline
$(4,2,2)$            & 4               & 2 \\ 
$(4,2,3)$            & 4               & 3 \\
$(4,3,3)$, $(4,3,4)$ & 6               & 4 \\
$(4,3,5)$            & 6               & 5 \\
$(4,4,4)$            & 6               & 5 \\ \hline
$(5,2,2)$            & 4               & 3 \\ 
$(5,5,5)$            & 10              & 9 \\ \hline
$(6,5,5)$            & 15              & 13 \\ \hline
$(8,5,5)$            & 20              & 19 \\
\hline
\end{tabular}
\renewcommand{\arraystretch}{1.0}
\caption{Exceptions for when $D(\vv,(2,1,1),2)$ does not meet the bound of Proposition~\ref{211-bound}.}
\label{table:2_1_1_exceptions}
\end{table}

\subsection{$\kk = (2,2)$}

This is the other case for $k=4$ where no generalized $2$-$(\vv,\kk,1)$ design can exist (except in the trivial case $\vv=(2,2)$).  However, once again there are objects in the literature which arise in this situation.  The graphical interpretation of Section~\ref{section:graphical} has a straightforward interpretation.  A $2$-$(\vv,\kk,1)$ generalized packing with $\vv=(v_1,v_2)$ and $\kk=(2,2)$ is equivalent to a packing of 4-cycles (i.e.\ $K_{2,2}$) into a complete bipartite graph $K_{v_1,v_2}$ with the following additional constraint: if $(a,x,b,y)$ and $(a,u,b,v)$ are 4-cycles in the packing, then $\{x,y\}=\{u,v\}$.  That is, if $K_{v_1,v_2}$ has vertex set $X_1\dot\cup X_2$, then no pair of vertices in $X_1$ is ever repeated, and likewise for $X_2$.

In the case where $K_{v_1,v_2}$ has a decomposition into 4-cycles with this property, the decomposition is said to be {\em monogamous}.  Such decompositions were introduced in the 1999 paper of Lindner and Rosa~\cite{lindner_rosa}, where they proved the following.

\begin{theorem}[Lindner and Rosa%
~{\cite[Theorem 2.7]{lindner_rosa}}%
] \label{thm:lindner_rosa}
A complete bipartite graph $K_{v_1,v_2}$ admits a monogamous decomposition into 4-cycles if and only if $v_1$ and $v_2$ are both even, $v_1\leq v_2 \leq 2v_1-2$, and either $v_1=v_2=2$ or $v_1,v_2\geq 6$.
\end{theorem}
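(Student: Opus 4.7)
The plan is to prove necessity and sufficiency separately.

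\textbf{Necessity.} Every 4-cycle in $K_{v_1, v_2}$ uses precisely the four edges between some pair $P \subseteq X_1$ and some pair $Q \subseteq X_2$, so a monogamous decomposition is equivalent to a family $\{(P_\alpha, Q_\alpha)\}$ of pairs-of-pairs with all $P_\alpha$ distinct, all $Q_\alpha$ distinct, and the corresponding four-edge sets partitioning $E(K_{v_1,v_2})$. For $a \in X_1$, the $v_2$ edges at $a$ split into $v_2/2$ pairs (one per 4-cycle through $a$), and each such cycle pairs $a$ with a distinct other vertex of $X_1$; hence $v_2$ is even and $v_2/2 \leq v_1-1$. Symmetrically, $v_1$ is even and $v_1 \leq 2v_2-2$. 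So $v_1=2$ forces $v_2=2$. To rule out $v_1=4$ with $v_2 \in \{4,6\}$, I would let $G_1$ be the $(v_2/2)$-regular $X_1$-pair graph (necessarily $C_4$ or $K_4$) and $G_2$ the $2$-regular $X_2$-pair graph, and exploit the rigidity of perfect matchings: in $C_4$, in $C_6$, and in $2C_3$ (the only $2$-regular simple graphs on $4$ or $6$ vertices), two distinct perfect matchings are disjoint, and $2C_3$ has none. For each edge $\{a,b\} \in G_1$, the perfect matchings of $G_2$ assigned to the cycles through $a$ and through $b$ both contain $f(\{a,b\})$, forcing them to coincide; connectedness of $G_1$ then forces all these matchings equal, so the $|E(G_1)|$ edges of $G_1$ inject into a single $(|V(G_2)|/2)$-edge matching of $G_2$, a contradiction.

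\textbf{Sufficiency.} The case $v_1=v_2=2$ is immediate. For the square case $v_1=v_2=2m$ with $m\geq 3$, I would take $G_1$ and $G_2$ to be copies of $K_{m,m}$ (each with a fixed bipartition of $X_1$ or $X_2$), and identify their edges with the cells of an $m\times m$ array. Under this identification, the condition that the edges of $G_1$ at each vertex map under $f$ to a perfect matching of $G_2$ (and vice versa), with $f$ a bijection, is exactly equivalent to writing $f(i,j)=(L(i,j),M(i,j))$ for a pair of mutually orthogonal Latin squares $L,M$ of order $m$: rows/columns of $L$ and $M$ being permutations encodes the matching condition, while orthogonality encodes injectivity and the symmetric matching condition. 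Since $2$ MOLS$(m)$ exist for every $m \neq 2,6$, this settles $v_1=v_2=2m$ for $m\geq 3$, $m\neq 6$. The case $n=12$ requires an ad hoc construction using a different $6$-regular graph on $12$ vertices (for instance, a Cayley graph on $\mathbb{Z}_{12}$). For the asymmetric cases $v_1 < v_2 \leq 2v_1-2$ with $v_2=2m$, $v_1=2\ell$, I would take $G_1$ to be an $m$-regular graph on $2\ell$ vertices (realised as the union of $m$ perfect matchings of $K_{2\ell}$) and $G_2$ an $\ell$-regular graph on $2m$ vertices, then construct the bijection $f$ via starters or via mutually orthogonal Latin rectangles in the spirit of Definition~\ref{def:molr}.

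The hardest step will be the asymmetric sufficiency construction, particularly at the extreme $v_2 = 2v_1-2$, where every pair in $X_1$ must appear (so $G_1$ is forced to be $K_{v_1}$) and the freedom to choose $G_2$ is minimised; I would expect a separate argument here using either a cyclic starter/adder construction or induction by gluing smaller monogamous decompositions along a shared 1-factor. The direct construction for $n=12$ in the square case is a secondary, more isolated obstacle, since MOLS$(6)$ are unavailable.
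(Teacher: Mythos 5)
This theorem is quoted from Lindner and Rosa~\cite{lindner_rosa}; the paper offers no proof of its own, so your attempt has to stand on its internal completeness. Your necessity argument is complete and correct: the degree count at a vertex of $X_1$ gives $v_2$ even and $v_2\leq 2v_1-2$ (and symmetrically), and your pair-graph argument disposes of $(4,4)$ and $(4,6)$ cleanly --- the images under $f$ of the edges at a fixed vertex of $G_1$ do partition $X_2$ and hence form a perfect matching of $G_2$; since distinct perfect matchings of $C_4$ and of $C_6$ are disjoint and $2C_3$ has none, and $G_1$ ($C_4$ or $K_4$) is connected, all of $E(G_1)$ would have to inject into a single perfect matching of $G_2$, which is too small. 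Your reduction of the square case $v_1=v_2=2m$ to a pair of $\mathrm{MOLS}(m)$ via $G_1=G_2=K_{m,m}$ is also correct, and the general framework (an $(v_2/2)$-regular graph on $X_1$, a $(v_1/2)$-regular graph on $X_2$, and an edge bijection carrying vertex stars to perfect matchings in both directions) is exactly the right way to organise the problem.

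The gap is that the sufficiency direction, which is the real content of the theorem, is mostly not carried out. You give no construction for $K_{12,12}$ (a Cayley graph on $\mathbb{Z}_{12}$ is named but neither exhibited nor verified), and the entire asymmetric range $v_1<v_2\leq 2v_1-2$ --- most of the parameter space --- is left at the level of ``starters or MOLR,'' with the extreme case $v_2=2v_1-2$ explicitly deferred. These cases are not routine: at $v_2=2v_1-2$ the graph $G_1$ is forced to be $K_{v_1}$ and the required object is essentially a Room-square-type pairing of a one-factorization of $K_{v_1}$ with a regular graph on $X_2$. As the paper itself remarks immediately after the theorem statement, Lindner and Rosa's proof routes almost all cases through Howell designs, via symmetric Howell squares --- that is, precisely through the existence theory you would need to redevelop or invoke to fill these cases. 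As written, your proposal establishes necessity and one family of sufficiency cases, but not the theorem.
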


The first two necessary conditions (i.e.\ that $v_1$ and $v_2$ are both even, and $v_1\leq v_2 \leq 2v_1-2$) are straightforward.  However, we note that there is no monogamous 4-cycle decomposition of $K_{4,4}$ or $K_{4,6}$.

In almost all cases, Lindner and Rosa's proof used Howell designs in the construction of monogamous 4-cycle decompositions, by means of another, related %equivalent
object known as a {\em symmetric Howell square}.  In what follows, we extend their results to the more general case of packings.  Our approach involves applying Proposition~\ref{merging} to a generalized packing with $\kk=(2,1,1)$, and merging the two parts with $k_i=1$ to form a packing with $\kk=(2,2)$.  In fact, in many cases if we begin with a maximum packing with $\kk=(2,1,1)$, this approach yields a maximum packing with $\kk=(2,2)$.  We note that, by using maximum packings with $\kk=(2,1,1)$, our approach is also based on Howell designs.

When $\vv=(v_1,v_2)$ and $\kk=(2,2)$, we can assume without loss of generality that $v_1 \leq v_2$.  In this case, the bound of Proposition~\ref{t=2_bound} gives us the following.
\begin{proposition} \label{22-bound}
Let $\vv=(v_1, v_2)$, where $v_1 \leq v_2$, and $\kk=(2,2)$.  Then
$$
D(\vv,\kk,2) \leq \min \left\{ \binom{v_1}{2}, \left\lfloor \frac{v_1}{2} \left\lfloor \frac{v_2}{2} \right\rfloor \right\rfloor, \left\lfloor \frac{v_2}{2} \left\lfloor \frac{v_1}{2} \right\rfloor \right\rfloor \right\}
$$
\end{proposition}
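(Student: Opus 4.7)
The plan is to obtain this bound as an immediate specialization of Proposition~\ref{t=2_bound} (which itself combines Proposition~\ref{indiv_part} and Corollary~\ref{schonheim_type}). Setting $m=2$, $\vv=(v_1,v_2)$, and $\kk=(2,2)$ in Proposition~\ref{t=2_bound}, we obtain
\[
D(\vv,\kk,2) \leq \min\left\{ \min_{k_i \geq 2} D(v_i,k_i,2),\ \min_{i\neq j} \left\lfloor \tfrac{v_i}{k_i}\left\lfloor \tfrac{v_j}{k_j}\right\rfloor\right\rfloor\right\},
\]
so the task reduces to simplifying each of these two inner minima.

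For the first inner minimum, both components satisfy $k_i=2\geq 2$, so both contribute. The only step that requires any justification is the identity $D(v,2,2)=\binom{v}{2}$: a $2$-$(v,2,1)$ ordinary packing is a collection of $2$-subsets of a $v$-set with no pair repeated, which is exactly an edge set of a simple graph on $v$ vertices, maximized by taking $K_v$. Hence the first inner minimum equals $\min\{\binom{v_1}{2},\binom{v_2}{2}\}$, which under the assumption $v_1\leq v_2$ is $\binom{v_1}{2}$. For the second inner minimum, with $m=2$ the only ordered pairs $(i,j)$ with $i\neq j$ are $(1,2)$ and $(2,1)$, giving respectively $\left\lfloor \tfrac{v_1}{2}\left\lfloor \tfrac{v_2}{2}\right\rfloor\right\rfloor$ and $\left\lfloor \tfrac{v_2}{2}\left\lfloor \tfrac{v_1}{2}\right\rfloor\right\rfloor$. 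Taking the minimum of all three quantities yields exactly the bound in the statement.

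Since the proof is essentially a one-line invocation of a previously stated bound together with the trivial identification $D(v,2,2)=\binom{v}{2}$, there is no serious obstacle. The only point worth being careful about is the ordering assumption $v_1\leq v_2$: it guarantees that $\binom{v_1}{2}\leq \binom{v_2}{2}$ and hence lets us drop the second binomial from the displayed minimum, but it does \emph{not} let us discard either of the two floor expressions (these are incomparable in general, since $\lfloor v_1/2\rfloor\lfloor v_2/2\rfloor$ is symmetric while the outer floor breaks the symmetry). Both floor terms must therefore be retained in the final bound.
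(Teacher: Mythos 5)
Your proposal is correct and follows exactly the paper's route: the paper states this proposition with no separate proof, noting only that it is the specialization of Proposition~\ref{t=2_bound} to $\kk=(2,2)$, which is precisely what you carry out (including the correct observation that $D(v,2,2)=\binom{v}{2}$ and that both floor terms must be retained since neither dominates the other in general).
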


The exact value of this upper bound, which is given in Table~\ref{table:22-bound}, depends on the parities of $v_1$ and $v_2$, as well as their relative sizes.  
\renewcommand{\arraystretch}{1.2}
\begin{table}[hbt]
\centering
\begin{tabular}{|c|c|c|c|} \hline
$v_1$ & $v_2$ & Range                      & Upper bound \\ \hline
\multirow{2}{*}{Even} & \multirow{2}{*}{Even} %&
              & $v_2 \geq 2v_1$            & $\binom{v_1}{2}$                \\
      &       & $v_1 \leq v_2 < 2v_1$      & $v_1 v_2 / 4$                   \\ \hline
\multirow{2}{*}{Even} & \multirow{2}{*}{Odd} %&
              & $v_2 > 2v_1$               & $\binom{v_1}{2}$                \\ 
      &       & $v_1 < v_2 < 2v_1$         & $v_1 (v_2-1)/4$                 \\ \hline
\multirow{2}{*}{Odd} & \multirow{2}{*}{Even} %&
              & $v_2 \geq 2v_1$            & $\binom{v_1}{2}$                \\
      &       & $v_1 < v_2 < 2v_1$         & $v_2 (v_1-1)/4$                 \\ \hline
\multirow{2}{*}{Odd} & \multirow{2}{*}{Odd} %&
              & $v_2 > 2v_1$               & $\binom{v_1}{2}$                \\
      &       & $v_1 \leq v_2 < 2v_1$      & $\lfloor v_2 (v_1-1)/4 \rfloor$ \\ \hline
\end{tabular}
\caption{Upper bounds on $D(\vv,\kk,2)$ when $\kk=(2,2)$.}
\label{table:22-bound}
\end{table}
\renewcommand{\arraystretch}{1.0}
It is clear from Table~\ref{table:22-bound} that the upper bound posited by Proposition~\ref{22-bound} is $\binom{v_1}{2}$ whenever $v_2 \geq 2v_1$.  We first consider this case.

\begin{lemma} \label{binom_22}
Suppose that $v_2 \geq 2v_1 \geq 4$.  Let $\vv=(v_1, v_2)$ and $\kk=(2,2)$.  Then $D(\vv,\kk,2) = \binom{v_1}{2}$, except possibly if $\vv \in \{(4,8),(5,10)\}$.
\end{lemma}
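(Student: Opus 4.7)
The upper bound $D(\vv,\kk,2)\leq\binom{v_1}{2}$ is already given by Proposition~\ref{22-bound} (this is the line in Table~\ref{table:22-bound} corresponding to $v_2\geq 2v_1$ with $v_1,v_2$ both even, and the analogous rows for other parities all coincide whenever $v_2\geq 2v_1$). So the only thing to do is to exhibit a $2$-$(\vv,\kk,1)$ generalized packing of size $\binom{v_1}{2}$. My plan is to produce such a packing by applying the merging operation of Proposition~\ref{merging} to a suitably chosen maximum $2$-$((v_1,a,b),(2,1,1),1)$ generalized packing, where $a+b=v_2$; the merge collapses the two singleton coordinates into one coordinate of size $v_2$ and changes $\kk'=(2,1,1)$ into $\kk=(2,2)$ without losing blocks.

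The key issue is to split $v_2=a+b$ so that the resulting $(2,1,1)$-packing has $\binom{v_1}{2}$ blocks. The natural choice is $a=v_1$, $b=v_2-v_1$, which satisfies $v_1\leq a\leq b$ because $v_2\geq 2v_1$. For $v_1\geq 7$, Lemma~\ref{binom_211} then immediately gives $D((v_1,a,b),(2,1,1),2)=\binom{v_1}{2}$, and merging finishes the proof. This is the bulk of the statement.

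The remaining work is to handle the exceptional base cases $v_1\in\{3,4,5,6\}$ excluded from Lemma~\ref{binom_211}. For $v_1=3$ and $v_2\geq 6$, Lemma~\ref{lemma:333} (with $(a,b)=(3,v_2-3)$) yields $3=\binom{3}{2}$ blocks. For $v_1=6$ and $v_2\geq 12$, Lemma~\ref{lemma:666} with $(a,b)=(6,v_2-6)$ yields $15=\binom{6}{2}$ blocks. For $v_1=4$ with $v_2\geq 9$, take $(a,b)=(4,v_2-4)$, so that $b\geq 5>4$ and the triple $(4,4,b)$ avoids the bad case $(4,4,4)$ of Lemma~\ref{lemma:444}, giving $6=\binom{4}{2}$ blocks. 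Similarly for $v_1=5$ with $v_2\geq 11$, take $(a,b)=(5,v_2-5)$, so that $(5,5,b)\neq(5,5,5)$ and Lemma~\ref{lemma:555} supplies $10=\binom{5}{2}$ blocks.

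The cases $\vv=(4,8)$ and $\vv=(5,10)$ are the genuine obstacles, and this is exactly why they appear as possible exceptions in the statement: with $v_2=2v_1$, the only splitting $a+b=v_2$ with $v_1\leq a\leq b$ forces $a=b=v_1$, landing on the triples $(4,4,4)$ and $(5,5,5)$ respectively, for which Lemmas~\ref{lemma:444} and~\ref{lemma:555} give only $5$ and $9$ blocks instead of $6$ and $10$. Any other splitting $(a,b)$ with $a<v_1$ fails the bound $v_2\lfloor v_1/2\rfloor$ or $v_2v_3$ of Proposition~\ref{211-bound}, so the merging strategy simply cannot reach $\binom{v_1}{2}$ in these two instances, which is why the theorem formally leaves them open.
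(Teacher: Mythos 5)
Your proof is correct and follows essentially the same route as the paper: both obtain the desired packing by applying the merging operation of Proposition~\ref{merging} to a maximum $2$-$((v_1,a,b),(2,1,1),1)$ packing of size $\binom{v_1}{2}$ supplied by Lemmas~\ref{binom_211}--\ref{lemma:666}. The only difference is your choice of split $(a,b)=(v_1,\,v_2-v_1)$ versus the paper's balanced split $(\lfloor v_2/2\rfloor,\lceil v_2/2\rceil)$; both satisfy $v_1\le a\le b$ and run into the same unavoidable base cases $(4,4,4)$ and $(5,5,5)$, which is exactly why $(4,8)$ and $(5,10)$ are excluded.
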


\begin{proof}
Proposition~\ref{22-bound} guarantees that $D(\vv,\kk,2) \leq \binom{v_1}{2}$, and so it suffices to prove the existence of a packing of this size.  Let $x_1 = v_1$, $x_2 = \lfloor v_2/2 \rfloor$ and $x_3 = \lceil v_2/2 \rceil$.  Notice that $x_1 \leq x_2 \leq x_3$ and $x_2 + x_3 = v_2$.  Let $\xx=(x_1, x_2, x_3)$ and $\kkappa = (2,1,1)$.  By Lemmas~\ref{binom_211} to~\ref{lemma:666}, there exists a $2$-$(\xx,\kkappa,1)$ packing of size $\binom{x_1}{2} = \binom{v_1}{2}$.  Hence, by Proposition~\ref{merging}, there is a $2$-$(\vv,\kk,1)$ packing of size $\binom{v_1}{2}$.  
\end{proof}

The exceptions for $\vv=(4,8)$ and $(5,10)$ arise from the fact that there is no $2$-$(\xx, (2,1,1),1)$ packing meeting the bound of Proposition~\ref{211-bound} if $\xx=(4,4,4)$ or $(5,5,5)$.  However, in both of these exceptional cases, we can obtain the exact value of $D(\vv,\kk,2)$ as follows.

\begin{lemma} \label{lemma:4_8}
Let $\vv=(4,8)$ and $\kk=(2,2)$.  Then $D(\vv,\kk,2) = 5$.
\end{lemma}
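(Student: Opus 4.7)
The plan is to show $D(\vv,\kk,2) \leq 5$ and to exhibit a generalized packing of size exactly $5$. Proposition~\ref{22-bound} already yields the bound $\binom{4}{2} = 6$, since we are in the boundary case $v_2 = 2v_1$. Thus the task is to sharpen this upper bound by one and then to match it with a construction.

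For the construction I would not attempt to use all six $X_1$-pairs; since only five blocks are needed, I can afford to omit one. Writing $X_1=\{1,2,3,4\}$ and $X_2=\{a,b,\ldots,h\}$, I would start with the three blocks $(\{1,j\},P_j)$ for $j=2,3,4$ with pairwise disjoint $P_j\subseteq X_2$, which saturates vertex $1$, and then add two more blocks on the $X_1$-pairs $\{2,3\}$ and $\{2,4\}$. The only constraints are that no $X_2$-pair and no $X_1\times X_2$ edge is reused, and a short finite search produces such a packing.

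For the upper bound I would argue by contradiction, supposing a packing of size $6$ exists. Since $\binom{4}{2}=6$, every pair $\{i,j\}\subseteq X_1$ appears as the $X_1$-component of exactly one block $B_{ij}$, whose $X_2$-component I denote $P_{ij}$. Each vertex $i\in X_1$ then lies in $3$ blocks, contributing $6$ pairwise distinct edges into $X_2$, so $i$ has exactly $6$ neighbors in $X_2$ (and $2$ non-neighbors). For $x\in X_2$ let $d_x$ be the number of blocks containing $x$; then $d_x\leq 2$, because three or more blocks at $x$ would produce more than $|X_1|=4$ distinct $X_1$-neighbors. Counting incidences, $\sum_{x\in X_2} d_x = 12$, and if $\mu_j$ denotes the number of $x\in X_2$ with $d_x=j$, then $\mu_0+\mu_1+\mu_2=8$ and $\mu_1+2\mu_2=12$, which forces $\mu_2\geq 4$.

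The key step, and where I expect the main obstacle lies, is the structural observation that if $d_x=2$ then the two blocks containing $x$ must have \emph{complementary} $X_1$-pairs, since the four distinct edges from $x$ into $X_1$ must use up all of $X_1$. There are only three partitions of $X_1$ into two pairs, namely $\{\{1,2\},\{3,4\}\}$, $\{\{1,3\},\{2,4\}\}$, $\{\{1,4\},\{2,3\}\}$. For each such partition $\{\{i,j\},\{k,l\}\}$, the vertices $x\in X_2$ realizing this partition are precisely the elements of $P_{ij}\cap P_{kl}$, and since $P_{ij}$ and $P_{kl}$ are distinct $2$-subsets of $X_2$ we have $|P_{ij}\cap P_{kl}|\leq 1$. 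Summing over the three partitions gives $\mu_2\leq 3$, contradicting $\mu_2\geq 4$. Combined with the explicit packing of size $5$, this yields $D(\vv,\kk,2)=5$.
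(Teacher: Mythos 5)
Your proof is correct, but both halves take a genuinely different route from the paper's. For the lower bound, the paper does not build a packing from scratch: it takes the size-$5$ packing with $\vv=(4,4,4)$, $\kk=(2,1,1)$ from Lemma~\ref{lemma:444} and merges the last two parts via Proposition~\ref{merging}; your direct construction works equally well, but you should actually write the five blocks down rather than appeal to a ``short finite search'' --- for instance $(\{1,2\},\{a,b\})$, $(\{1,3\},\{c,d\})$, $(\{1,4\},\{e,f\})$, $(\{2,3\},\{e,g\})$, $(\{2,4\},\{c,h\})$ is easily checked. For the upper bound, the paper normalizes the three blocks through the point $1$ to $(\{1,2\},\{a,b\})$, $(\{1,3\},\{c,d\})$, $(\{1,4\},\{e,f\})$, notes that each of the remaining blocks $B_{23},B_{24},B_{34}$ must then use a symbol from $\{g,h\}$, and observes that these three blocks pairwise meet in $X_1$, so their $X_2$-pairs are pairwise disjoint and at most two of them can touch $\{g,h\}$. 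Your argument instead runs a degree count on $X_2$: every symbol lies in at most two blocks, the incidence count forces at least four symbols of degree~$2$, and a degree-$2$ symbol must realize one of the three complementary partitions of $X_1$, each of which can be realized by at most one symbol since distinct blocks carry distinct $X_2$-pairs. Both are sound; the paper's is shorter after the WLOG normalization, while yours avoids normalization entirely and isolates a reusable structural fact (degree-$2$ symbols correspond to partitions of $X_1$) that is in the same spirit as the paper's treatment of the $(4,6)$ and $(4,7)$ cases.
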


\begin{proof}
Since $D((4,4,4), (2,1,1),2) = 5$ by Lemma~\ref{lemma:444}, then by merging parts we have that $D((\vv,\kk,2) \geq 5$.  We shall show by contradiction that there can be no packing of size~6.

Suppose the symbol set is $X_1\cup X_2 = \{1,2,3,4\}\cup\{a,b,c,d,e,f,g,h\}$.  Now, any packing of size~6 would have to include all pairs chosen from $\{1,2,3,4\}$; in particular, it would have to use the pairs $\{1,2\}$, $\{1,3\}$ and $\{1,4\}$.  Without loss of generality, we can assume that the blocks containing these pairs are $(\{1,2\},\{a,b\})$, $(\{1,3\},\{c,d\})$ and $\{1,4\},\{e,f\})$.  The remaining blocks must contain the pairs $\{2,3\}$, $\{2,4\}$ and $\{3,4\}$: each block must also contain a pair containing at least one of the symbols $g,h$.  However, at most two such blocks can be formed.
\end{proof}

\begin{lemma} \label{lemma:5_10}
Let $\vv=(5,10)$ and $\kk=(2,2)$.  Then $D(\vv,\kk,2) = 10$.
\end{lemma}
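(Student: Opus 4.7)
The upper bound $D(\vv,\kk,2)\le\binom{5}{2}=10$ is immediate from Proposition~\ref{22-bound}, since $v_2=2v_1$ puts us in the row of Table~\ref{table:22-bound} where the bound specialises to $\binom{v_1}{2}$. What remains is to construct a $2$-$(\vv,\kk,1)$ generalized packing with $10$ blocks. Unlike the cases handled by Lemma~\ref{binom_22}, the merging trick of Proposition~\ref{merging} does not quite reach here: for every admissible split $10=v_2+v_3$ with $v_2\le v_3$, Lemma~\ref{lemma:555} together with Lemmas~\ref{howell_211} and~\ref{soma_211} gives at most $9$ blocks in a $2$-$((5,v_2,v_3),(2,1,1),1)$ packing (the maximum $9$ being attained only at $\xx=(5,5,5)$). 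So a bespoke construction is needed.

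The construction I would use is modelled on the Petersen graph $P$, whose vertex set is naturally $\binom{X_1}{2}$ and whose edges join disjoint pairs; in this language, the edges of $P$ correspond precisely to the $2$-matchings of the complete graph on $X_1$. Fix any $2$-regular spanning subgraph $P'\subseteq P$---for instance the disjoint union of the outer and inner $5$-cycles in the classical drawing, which is the complement of the $5$-spoke perfect matching---and bijectively label its $10$ edges by the elements of $X_2$, writing $M_y$ for the matching labelled by $y\in X_2$. Each vertex $e\in\binom{X_1}{2}$ has degree $2$ in $P'$, incident to edges labelled by some $y_1,y_2\in X_2$; take $(e,\{y_1,y_2\})$ as a block. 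This yields exactly $10$ blocks, one per element of $\binom{X_1}{2}$.

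Verification splits by admissible $\tt\in\{(2,0),(1,1),(0,2)\}$. The case $\tt=(2,0)$ is built in, since each pair of $X_1$ indexes a unique block. For $\tt=(1,1)$, any $y\in X_2$ appears in exactly the two blocks indexed by the edges of the matching $M_y$, and since $M_y$ is a $2$-matching these two edges are disjoint in $X_1$, so no mixed pair $(x,y)$ is ever repeated. For $\tt=(0,2)$, suppose a pair $\{y_1,y_2\}\subseteq X_2$ occurred as the $X_2$-part of two distinct blocks $(e,\{y_1,y_2\})$ and $(e',\{y_1,y_2\})$; then both $e$ and $e'$ would lie in $M_{y_1}\cap M_{y_2}$, and since two distinct size-$2$ matchings of $K_5$ share at most one edge, this forces $e=e'$, a contradiction. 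The main conceptual hurdle is recognising that the preceding merging toolkit is exhausted and locating the right direct construction; once the Petersen model is in hand the verification is a short case analysis.
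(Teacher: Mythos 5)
Your proof is correct, and it takes a genuinely different route from the paper. The paper obtains the $10$-block packing by citing Lindner and Rosa's monogamous $4$-cycle decomposition of $K_{6,10}$, deleting one point of the $6$-element side, and simply listing the resulting ten blocks; no structural explanation is given. You instead build the packing directly from a $2$-regular spanning subgraph of the Petersen graph (vertices $=$ pairs of $X_1$, edges $=$ disjoint pairs), and your three-case verification over $\tt=(2,0),(1,1),(0,2)$ is sound: the $(0,2)$ case correctly uses that two distinct $2$-edge matchings of $K_5$ share at most one edge, which together with the bijective edge-labelling rules out a repeated pair from $X_2$. Your construction is in fact the ``generic'' one: a simple count shows that in any $10$-block packing each element of $X_2$ must lie in exactly two blocks with disjoint $X_1$-parts, so every optimal packing (including the paper's explicit one) arises exactly as you describe. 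What each approach buys: the paper's is a two-line reduction to a known result, while yours is self-contained, explains why such a packing exists, and correctly diagnoses in advance why the merging machinery of Proposition~\ref{merging} tops out at $9$ blocks here (the only split reaching the bound would need $D((5,5,5),(2,1,1),2)=10$, which fails by Lemma~\ref{lemma:555}). The only cosmetic caveat is that you should confirm the existence of the $2$-regular spanning subgraph explicitly (e.g.\ as the complement of the spoke perfect matching), which you do.
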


\begin{proof}
We can obtain a packing of size~10 by deleting a point from the monogamous 4-cycle decomposition of $K_{6,10}$, given by Lindner and Rosa~\cite[Example 2.4]{lindner_rosa}, to obtain the following:
\[ 
\begin{array}{c}
(\{1,2\},\{a,b\}) \\
(\{1,3\},\{c,d\}) \\
(\{1,4\},\{e,f\}) \\
(\{1,5\},\{g,h\}) \\
(\{2,3\},\{e,i\})
\end{array}
\qquad
\begin{array}{c}
(\{2,4\},\{c,g\}) \\
(\{2,5\},\{d,j\}) \\
(\{3,4\},\{h,j\}) \\
(\{3,5\},\{a,f\}) \\
(\{4,5\},\{b,i\}).
\end{array}
\]
\end{proof}

It remains to consider the case that $v_1 \leq v_2 < 2v_1$.  Here, the upper bound from Lemma~\ref{22-bound} is $\lfloor v_1/2 \rfloor \cdot \lfloor v_2/2 \rfloor$ unless $v_1$ and $v_2$ are both odd.  

\begin{lemma} \label{22-howell}
Suppose that $2 \leq v_1 \leq v_2 < 2v_1$.  Let $\vv=(v_1, v_2)$ and $\kk=(2,2)$.  Then there is a $2$-$(\vv,\kk,1)$ packing of size $\lfloor v_1/2 \rfloor \cdot \lfloor v_2/2 \rfloor$, except if $\vv \in \{(4,4), (4,6)\}$, and except possibly in the case that $\vv \in \{(4,5), (4,7)\}$.  
\end{lemma}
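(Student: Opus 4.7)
The plan is to derive the existence of a generalized packing of the target size $N:=\lfloor v_1/2\rfloor\lfloor v_2/2\rfloor$ by combining two tools: Proposition~\ref{merging} applied to the $(2,1,1)$-packings classified in Section~\ref{(2,1,1) case}, supplemented by Lindner and Rosa's monogamous 4-cycle decomposition theorem (Theorem~\ref{thm:lindner_rosa}) to close the one systematic gap the merging approach leaves open.

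The first step is to set $x_2=\lfloor v_2/2\rfloor$ and $x_3=\lceil v_2/2\rceil$, and to push a $2$-$((v_1,x_2,x_3),(2,1,1),1)$ packing of size $N$ through Proposition~\ref{merging}. A short parity check shows that $x_2<v_1$ always (since $v_2<2v_1$), and that either $v_1\leq 2x_2$ (putting us in the scope of Lemma~\ref{howell_211}) or $v_1=v_2$ is odd (in which case $2x_2<v_1\leq 2x_3$, so Lemma~\ref{soma_211} takes over, with no obstruction because its only exclusion is $v_1=4$). In either regime the applicable lemma delivers a $(2,1,1)$-packing of size $x_2\lfloor v_1/2\rfloor=N$, which merges to the required $(2,2)$-packing, as long as $(v_1,x_2)$ avoids the forbidden list $\{(3,2),(4,2),(4,3),(6,5),(7,5),(8,5)\}$ attached to Lemma~\ref{howell_211}.

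The second step is to examine each pair $(v_1,v_2)$ for which $(v_1,\lfloor v_2/2\rfloor)$ lands in that list, and to compare $N$ against the sharper values of $D((v_1,x_2,x_3),(2,1,1),2)$ read off from the sub-lemmas immediately following Lemma~\ref{howell_211}. Going through the possibilities one finds that for $\vv\in\{(3,4),(3,5),(6,11),(7,10),(7,11),(8,11)\}$ those sub-lemmas (in particular Lemmas~\ref{lemma:65v3}, \ref{lemma:75v3} and \ref{lemma:85v3} when $v_1\in\{6,7,8\}$) still supply a $(2,1,1)$-packing of size exactly $N$, so the merging strategy succeeds without modification. The cases where it falls strictly short are $\vv\in\{(4,4),(4,5),(4,6),(4,7),(6,10),(8,10)\}$.

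For $\vv\in\{(6,10),(8,10)\}$ I would invoke Theorem~\ref{thm:lindner_rosa}: both coordinates are even and at least $6$ and $v_2\leq 2v_1-2$, so a monogamous 4-cycle decomposition of $K_{v_1,v_2}$ exists and directly furnishes the required $v_1v_2/4=N$ blocks. For $\vv\in\{(4,4),(4,6)\}$ a packing of size $N$ would use all $v_1v_2$ edges of $K_{v_1,v_2}$ and therefore be a monogamous decomposition, which Theorem~\ref{thm:lindner_rosa} forbids; these are precisely the stated true exceptions. The pairs $\vv\in\{(4,5),(4,7)\}$ slip through every tool available here (the merging route falls at least one block short, and the edge-saturation argument is inconclusive because $4N<v_1v_2$), and this is exactly the situation the lemma flags with ``except possibly''. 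The main obstacle to completing the lemma cleanly is therefore this honest uncertainty about $(4,5)$ and $(4,7)$, rather than any step not already supplied by the Howell-design machinery of Section~\ref{(2,1,1) case} combined with Theorem~\ref{thm:lindner_rosa}.
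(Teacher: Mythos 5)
Your proposal is correct and follows essentially the same route as the paper: merge a maximum $2$-$((v_1,\lfloor v_2/2\rfloor,\lceil v_2/2\rceil),(2,1,1),1)$ packing via Proposition~\ref{merging} (using Lemma~\ref{soma_211} when $v_1=v_2$ is odd and Lemma~\ref{howell_211} plus its exceptional sub-lemmas otherwise), and then settle the residual cases $(4,4),(4,6),(6,10),(8,10)$ by identifying size-$\lfloor v_1/2\rfloor\lfloor v_2/2\rfloor$ packings with monogamous $4$-cycle decompositions and invoking Theorem~\ref{thm:lindner_rosa}, leaving $(4,5)$ and $(4,7)$ open. Your explicit enumeration of which $(v_1,\lfloor v_2/2\rfloor)$ exceptions still yield packings of the right size is a slightly more careful write-up of what the paper asserts implicitly, but the argument is the same.
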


\begin{proof}
As in the proof of Lemma~\ref{binom_22}, we let $x_1 = v_1$, $x_2 = \lfloor v_2/2 \rfloor$, $x_3 = \lceil v_2/2 \rceil$, $\xx=(x_1, x_2, x_3)$ and $\kkappa = (2,1,1)$.  It is not difficult to show that $x_2 + 1 \leq x_1 \leq 2x_2$ if $x_2$ is even, and $x_2 + 1 \leq x_1 \leq 2x_2+1$ if $x_2$ is odd.  Hence by Lemma~\ref{soma_211} (in the case that $x_1=2x_2+1$, noting that here $2x_2<x_1 \leq 2x_3$) and Lemmas~\ref{howell_211} and~\ref{lemma:65v3} to~\ref{lemma:85v3} (otherwise), there exists a $2$-$(\xx,\kkappa,1)$ packing of size $x_2 \lfloor x_1/2 \rfloor = \lfloor v_1/2 \rfloor \cdot \lfloor v_2/2 \rfloor$.  Applying Proposition~\ref{merging} gives a $2$-$(\vv,\kk,1)$ packing of the same size.

The results quoted above leave open the cases where $\vv \in \{(4,4), (4,5), (4,6), (4,7), (6,10), (8,10)\}$.  In the cases where $v_1,v_2$ are both even, each of these correspond to monogamous 4-cycle decompositions of $K_{v_1,v_2}$, so we can appeal to Theorem~\ref{thm:lindner_rosa} above (due to Lindner and Rosa).  This shows that decompositions (and thus packings of size $\lfloor v_1/2 \rfloor \cdot \lfloor v_2/2 \rfloor$) do not exist for $\vv=(4,4)$ and $\vv=(4,6)$, but do exist for $\vv=(6,10)$ and $\vv=(8,10)$.  Lindner and Rosa~\cite[Examples 2.4, 3.2, 3.3]{lindner_rosa} give examples of such decompositions.
\end{proof}

\begin{corollary}
Suppose that $2 \leq v_1 \leq v_2 < 2v_1$, and let $\vv=(v_1, v_2)$ and $\kk=(2,2)$.  If $\vv \notin \{ (4,4), (4,5), (4,6), (4,7)\}$ and at least one of $v_1$ and $v_2$ is even, then $D(\vv,\kk,2) = \lfloor v_1/2 \rfloor \cdot \lfloor v_2/2 \rfloor$.  
\end{corollary}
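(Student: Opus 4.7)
The plan is to observe that, under the stated hypotheses, the upper bound from Proposition~\ref{22-bound} coincides exactly with the value $\lfloor v_1/2\rfloor\cdot\lfloor v_2/2\rfloor$ already realized constructively by Lemma~\ref{22-howell}, so the two bounds meet and determine $D(\vv,\kk,2)$ precisely.

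First I would handle the upper bound. Reading off Table~\ref{table:22-bound} under the assumption $v_1\leq v_2<2v_1$ and at least one of $v_1,v_2$ even, there are three rows to check:
\begin{itemize}
\item If $v_1,v_2$ are both even, the tabulated bound is $v_1v_2/4=(v_1/2)(v_2/2)=\lfloor v_1/2\rfloor\lfloor v_2/2\rfloor$.
\item If $v_1$ is even and $v_2$ is odd, the bound is $v_1(v_2-1)/4=(v_1/2)\cdot((v_2-1)/2)=\lfloor v_1/2\rfloor\lfloor v_2/2\rfloor$.
\item If $v_1$ is odd and $v_2$ is even, the bound is $v_2(v_1-1)/4=((v_1-1)/2)(v_2/2)=\lfloor v_1/2\rfloor\lfloor v_2/2\rfloor$.
\end{itemize}
In each admissible subcase the bound is $\lfloor v_1/2\rfloor\cdot\lfloor v_2/2\rfloor$.

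Next I would invoke Lemma~\ref{22-howell}, which asserts the existence of a $2$-$(\vv,\kk,1)$ generalized packing of size $\lfloor v_1/2\rfloor\cdot\lfloor v_2/2\rfloor$ whenever $2\leq v_1\leq v_2<2v_1$, with the sole exceptions (or possible exceptions) $\vv\in\{(4,4),(4,5),(4,6),(4,7)\}$. The corollary's hypothesis excludes precisely these four tuples, so the construction of Lemma~\ref{22-howell} applies and gives a matching lower bound.

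Combining the two gives $D(\vv,\kk,2)=\lfloor v_1/2\rfloor\cdot\lfloor v_2/2\rfloor$, as required. I do not anticipate any real obstacle: the corollary is a pure bookkeeping consequence of Proposition~\ref{22-bound} and Lemma~\ref{22-howell}, the only content being the parity-case verification that the tabulated upper bound actually equals the quantity produced by the construction.
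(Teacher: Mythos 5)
Your proposal is correct and follows essentially the same route as the paper: the paper's proof likewise observes that for $v_1 \leq v_2 < 2v_1$ with $v_1, v_2$ not both odd the bound of Proposition~\ref{22-bound} simplifies to $\lfloor v_1/2 \rfloor \cdot \lfloor v_2/2 \rfloor$, and then cites Lemma~\ref{22-howell} for a matching construction. Your explicit parity-by-parity verification of the simplification is just a spelled-out version of what the paper asserts in one line.
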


\begin{proof}
In the case that $v_1 \leq v_2 < 2v_1$, where $v_1$ and $v_2$ are not both odd, the upper bound on $D(\vv,\kk,2)$ given by Proposition~\ref{22-bound} simplifies to $\lfloor v_1/2 \rfloor \cdot \lfloor v_2/2 \rfloor$.  Lemma~\ref{22-howell} guarantees the existence of a packing of this size.  
\end{proof}

The exceptions in Lemma~\ref{22-howell} arise from instances in which no Howell design exists, and as a result there does not exist a $2$-$(\xx,\kkappa,1)$ packing meeting the bound of Proposition~\ref{211-bound}.  The values of $D(\xx,\kkappa,2)$ for the vectors $\xx = (4,2,2)$, $(4,2,3)$, $(4,3,3)$, $(4,3,4)$, are 2, 3, 4 and 4, respectively.  Using Proposition~\ref{merging}, we can construct $2$-$(\vv,\kk,1)$ packings of these sizes for $\vv=(4,4)$, $(4,5)$, $(4,6)$ and $(4,7)$, respectively.  Applying the upper bound given by Proposition~\ref{22-bound}, and noting the non-existence of monogamous 4-cycle decompositions of $K_{4,4}$ and $K_{4,6}$, we obtain the following:
$$
\begin{array}{l}
2 \leq D((4,4),\kk,2) \leq 3 \\
3 \leq D((4,5),\kk,2) \leq 4 \\
4 \leq D((4,6),\kk,2) \leq 5 \\
4 \leq D((4,7),\kk,2) \leq 6.  
\end{array}
$$
In each of these cases, we can determine the packing numbers exactly.

\begin{lemma}
Let $\vv=(4,4)$ and $\kk=(2,2)$.  Then $D(\vv,\kk,2) = 2$.
\end{lemma}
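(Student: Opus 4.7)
The plan is to show $D((4,4),(2,2),2)\leq 2$; the matching lower bound $\geq 2$ is already noted in the excerpt (for instance via Proposition~\ref{merging} applied to $D((4,2,2),(2,1,1),2)=2$, or directly from the two blocks $(\{1,2\},\{a,b\})$, $(\{3,4\},\{c,d\})$). I will argue by contradiction: suppose a packing $\mathcal{P}=\{B_1,B_2,B_3\}$ of size $3$ exists, write $B_i=(A_i,Y_i)$ with $A_i\in\binom{X_1}{2}$ and $Y_i\in\binom{X_2}{2}$, and take $X_1=\{1,2,3,4\}$, $X_2=\{a,b,c,d\}$.

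The key observation I would record up front is that if a point $p\in X_1$ lies in two blocks $B_i,B_j$, then $\tt=(1,1)$-admissibility forces $Y_i\cap Y_j=\emptyset$ (otherwise some pair $(p,y)\in X_1\times X_2$ would appear twice); since $|Y_i|=|Y_j|=2$ and $|X_2|=4$, the sets $Y_i$ and $Y_j$ must actually partition $X_2$. By $\tt=(2,0)$-admissibility, the three pairs $A_1,A_2,A_3$ are distinct, so they form three edges of $K_4$ on $X_1$; up to isomorphism this 3-edge subgraph is a triangle, a path of length three, or a star $K_{1,3}$.

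I would then dispatch each shape in turn. In the $K_{1,3}$ case, a single vertex $p$ lies in all three blocks, hence would need to be paired with $2+2+2=6$ distinct elements of $X_2$, impossible. In the triangle case with $A_1=\{1,2\}$, $A_2=\{1,3\}$, $A_3=\{2,3\}$, the observation applied at vertex $1$ shows $Y_1,Y_2$ partition $X_2$, and at vertex $2$ shows $Y_1,Y_3$ partition $X_2$, forcing $Y_2=Y_3$ and contradicting $\tt=(0,2)$-admissibility. In the path case $A_1=\{1,2\}$, $A_2=\{2,3\}$, $A_3=\{3,4\}$, applying the observation at vertices $2$ and $3$ similarly forces $Y_1=Y_3$. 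All three cases collapse, so no packing of size $3$ exists, and $D((4,4),(2,2),2)=2$.

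There is no real obstacle: the only substantive step is the partition-of-$X_2$ observation, after which the enumeration of the three $3$-edge subgraphs of $K_4$ and the check in each case are routine. The one thing to be careful about is to confirm the case enumeration is exhaustive (triangle, path, star), which is straightforward since a $3$-edge simple graph on $4$ vertices with no isolated edges beyond those three shapes cannot exist.
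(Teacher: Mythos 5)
Your proof is correct, and it is organized differently from the one in the paper. The paper fixes a first block $(\{1,2\},\{a,b\})$ and splits on whether some other block is disjoint from it in both coordinates: if so, a short check shows no third block fits; if not, it normalizes the second block to $(\{1,3\},\{c,d\})$ (implicitly using the symmetry that swaps the roles of $X_1$ and $X_2$, which is available here since $v_1=v_2$) and again checks that nothing can be added. You instead assume three blocks exist and classify the configuration by the isomorphism type of the $3$-edge graph that the first coordinates $A_1,A_2,A_3$ induce on $X_1$, killing each of the three types (star, triangle, path) with the single observation that two blocks sharing a point of $X_1$ must have their $X_2$-pairs partition $X_2$. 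Your enumeration is indeed exhaustive (the only degree sequences of a $3$-edge simple graph on $4$ vertices are $(3,1,1,1)$, $(2,2,1,1)$ and $(2,2,2,0)$), and the partition observation is exactly the $\lambda=1$ condition for $\tt=(1,1)$ combined with $|X_2|=4$. What your route buys is a cleaner guarantee of exhaustiveness and no reliance on the coordinate-swapping symmetry; what the paper's route buys is brevity and the reuse of the same "extend block by block" style it employs for the neighbouring cases $\vv=(4,5)$, $(4,6)$ and $(4,7)$. Both arguments also need the size-$2$ lower bound, which you correctly note is immediate from two doubly disjoint blocks (or from merging parts in a $2$-$((4,2,2),(2,1,1),1)$ packing).
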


\begin{proof}
Suppose we have a maximum $2$-$(\vv,\kk,1)$ generalized packing.  We can assume that it contains the block $(\{1,2\},\{a,b\})$.  If the packing contains the block $(\{3,4\},\{c,d\})$, then it can contain no other blocks, and so has size 2.  Otherwise, we can assume that a second block in the packing is $(\{1,3\},\{c,d\})$; again, no other block can be added.
\end{proof}

\begin{lemma}
Let $\vv=(4,5)$ and $\kk=(2,2)$.  Then $D(\vv,\kk,2) = 3$.
\end{lemma}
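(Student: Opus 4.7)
The plan is to combine the already-known bounds with a tight counting argument to rule out a packing of size~4. From the discussion preceding the lemma, we already have $3 \leq D((4,5),(2,2),2) \leq 4$: the upper bound comes from Proposition~\ref{22-bound} (since $\lfloor \tfrac{4}{2}\lfloor \tfrac{5}{2}\rfloor\rfloor = 4$), and the lower bound follows by applying Proposition~\ref{merging} to a $2$-$((4,2,3),(2,1,1),1)$ packing of size~3. So only the strict inequality $D((4,5),(2,2),2) \leq 3$ needs a proof.

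First I would suppose, for contradiction, that $\mathcal{P}$ is a generalized packing with $4$ blocks. For each $i \in X_1$ (with $|X_1|=4$) let $r_i$ be the number of blocks containing $i$. The admissibility vector $\tt=(1,1)$ forces the $2r_i$ pairs $(i,j)$ arising from the $X_2$-halves of these blocks to be distinct, so $2r_i\leq v_2=5$ and hence $r_i\leq 2$. Since $\sum_i r_i = 2\cdot 4 = 8 = 2\cdot|X_1|$, equality gives $r_i=2$ for every $i$. Consequently the $X_1$-pairs of the four blocks, viewed as edges on $X_1$, form a $2$-regular simple graph on $4$ vertices, i.e.\ a $4$-cycle. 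Relabelling $X_1$, I may assume the four blocks are
\[
B_1=(\{1,2\},Y_1),\quad B_2=(\{2,3\},Y_2),\quad B_3=(\{3,4\},Y_3),\quad B_4=(\{1,4\},Y_4),
\]
with each $Y_\ell \in \binom{X_2}{2}$.

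The next step is the key observation. If an element $j\in X_2$ lies in two blocks $B$ and $B'$, then the $X_1$-pairs of $B$ and $B'$ must be disjoint; otherwise a common $i\in X_1$ would make the pair $(i,j)$ occur twice, violating $\tt=(1,1)$. In the $4$-cycle this means $\{B,B'\}=\{B_1,B_3\}$ or $\{B_2,B_4\}$. Moreover, the number of $j$'s shared between $B_1$ and $B_3$ is $|Y_1\cap Y_3|$, which is at most $1$: equality to $2$ would give $Y_1=Y_3$ and violate $\tt=(0,2)$. Similarly $|Y_2\cap Y_4|\leq 1$. Therefore at most $2$ elements of $X_2$ appear in two blocks, while the remaining elements appear in at most one. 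Writing $s_j$ for the number of blocks containing $j\in X_2$, this gives $\sum_{j\in X_2} s_j \leq 2\cdot 2 + 3\cdot 1 = 7$, contradicting $\sum_j s_j = 2\cdot 4 = 8$.

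The main obstacle is conceptual rather than technical: it is the correct simultaneous use of the three admissibility vectors $\tt=(2,0)$, $(1,1)$, $(0,2)$, packaged through the $X_1$-degree sequence into the structural fact that the $X_1$-pair graph is a $4$-cycle, and then exploiting the resulting opposite/adjacent dichotomy to squeeze the $X_2$-incidence count. Once the $4$-cycle structure is in hand the rest is an easy double count, so after exhibiting an explicit packing of size~$3$ (for example, $(\{1,2\},\{a,b\})$, $(\{3,4\},\{a,c\})$, $(\{1,3\},\{d,e\})$), the lemma follows.
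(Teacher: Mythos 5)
Your proof is correct, but it takes a genuinely different route from the paper's. The paper fixes a first block without loss of generality and then performs a short case analysis: either some later block is disjoint from it in both coordinates (forcing the packing to stop at size~2), or every later block meets it, in which case fixing a second block leaves only four candidate further blocks, pairwise incompatible, so at most one can be added. You instead rule out a size-4 packing by a global double count: the $\tt=(1,1)$ condition forces the $X_2$-parts of the blocks through a fixed $X_1$-point to be disjoint, giving $r_i\leq 2$ and hence (by $\sum r_i=8$) a $2$-regular simple $X_1$-pair graph, i.e.\ a $4$-cycle; then the matching structure of $C_4$ together with $\tt=(0,2)$ caps the $X_2$-incidence count at $7<8$. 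Both arguments are sound and of comparable length. The paper's is more elementary and self-contained; yours is more structural and arguably more robust --- the same degree/matching bookkeeping is the kind of argument one would reuse for other small instances of $\kk=(2,2)$ (and indeed echoes the parity obstructions behind Lindner and Rosa's nonexistence results for $K_{4,4}$ and $K_{4,6}$), whereas the paper's case split is tailored to this one vector. Your lower-bound witness and the appeal to Proposition~\ref{merging} via a $2$-$((4,2,3),(2,1,1),1)$ packing of size~3 both check out.
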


\begin{proof}
We know that $D(\vv,\kk,2) \geq 3$.  Suppose we have a maximum $2$-$(\vv,\kk,1)$ packing.  Without loss of generality, we can assume that it contains the block $(\{1,2\}, \{a,b\})$.  If the packing contains a block $(B_1, B_2)$ such that $B_1 \cap \{1,2\} = \emptyset$ and $B_2 \cap \{a,b\} = \emptyset$ (we can assume that $(B_1, B_2) = (\{3,4\},\{c,d\})$), then it can contain no other block, and so this packing, of size 2, cannot be maximum. 

So each block $(B_1, B_2)$ in the packing must have either $B_1 \cap \{1,2\} \neq \emptyset$ or $B_2 \cap \{a,b\} \neq \emptyset$.  If there is a block $(B_1,B_2)$ such that $B_1 \cap \{1,2\} \neq \emptyset$, we can assume that this block is $(\{1,3\}, \{c,d\})$.  Now the only remaining possible blocks are $(\{2,4\},\{c,e\})$, $(\{2,4\},\{d,e\})$, $(\{3,4\},\{a,e\})$ and $(\{3,4\},\{b,e\})$; however, the packing can contain at most one of these blocks.

Similarly, if there is a block $(B_1,B_2)$ such that $B_2 \cap \{a,b\} \neq \emptyset$, then there can be at most one further block added.
\end{proof}

\begin{lemma}
Let $\vv=(4,6)$ and $\kk=(2,2)$.  Then $D(\vv,\kk,2) = 4$.
\end{lemma}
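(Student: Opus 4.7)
The plan is to establish $D((4,6),(2,2),2)=4$ by combining the lower bound $D((4,6),(2,2),2)\geq 4$, which is immediate from the preceding discussion (apply Proposition~\ref{merging} to a size-$4$ packing with $\xx=(4,3,3)$ and $\kkappa=(2,1,1)$), with a matching upper bound. The upper bound from Proposition~\ref{22-bound} is $5$, so the task reduces to ruling out a $2$-$(\vv,\kk,1)$ generalized packing with $5$ blocks.

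Assume for contradiction that such a $5$-block packing $\mathcal{P}$ exists. Since each block contributes a distinct pair from $X_1$ (by the $\tt=(2,0)$ condition), and $\left|{X_1 \choose 2}\right|=6$, exactly one pair of $X_1$ is not used; by relabelling assume the missing pair is $\{3,4\}$. Then the five blocks may be written as
\[ (\{1,2\},Q_1),\ (\{1,3\},Q_2),\ (\{1,4\},Q_3),\ (\{2,3\},Q_4),\ (\{2,4\},Q_5), \]
where each $Q_i\in{X_2 \choose 2}$. The forcing observation I would use is: for any $x\in X_1$, the $X_2$-pairs of the blocks containing $x$ must be pairwise disjoint, since otherwise a mixed pair $\{x,u\}$ would occur in two blocks, violating the $\tt=(1,1)$ condition.

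Applying this to vertex $1$ (which lies in three blocks) shows that $Q_1,Q_2,Q_3$ are three disjoint $2$-subsets of the $6$-set $X_2$ and therefore partition it; the same reasoning at vertex $2$ shows $\{Q_1,Q_4,Q_5\}$ is also a partition of $X_2$, so $Q_2\cup Q_3=Q_4\cup Q_5$. Applying the observation at vertex $3$ yields $Q_2\cap Q_4=\emptyset$, which combined with $Q_4\subseteq Q_2\cup Q_3$ forces $Q_4\subseteq Q_3$; since $|Q_4|=|Q_3|=2$ we conclude $Q_4=Q_3$. But then the pair $Q_3=Q_4\in{X_2 \choose 2}$ appears in two different blocks, contradicting the $\tt=(0,2)$ condition. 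Hence no size-$5$ packing exists, and $D(\vv,\kk,2)=4$.

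There is no real obstacle here: the argument is a short forcing chase, and the main thing to be careful about is invoking the disjointness observation correctly for each admissible vector $\tt\in\{(2,0),(1,1),(0,2)\}$. The neat feature is that the degree sequence in $X_1$ is forced to be $(3,3,2,2)$, and the two ``degree-$3$'' vertices each produce a partition of $X_2$ into the same three pairs, after which a single degree-$2$ vertex is enough to collapse the construction.
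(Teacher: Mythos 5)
Your proof is correct and follows essentially the same route as the paper: the lower bound comes from merging a size-$4$ packing with $\xx=(4,3,3)$, $\kkappa=(2,1,1)$ via Proposition~\ref{merging}, and the upper bound rests on the same key observation that the $X_2$-pairs of blocks through a common element of $X_1$ must be pairwise disjoint, so that a degree-$3$ vertex of $X_1$ forces those pairs to partition $X_2$. The paper packages the obstruction as ``in any packing of size at least $4$, no element of $X_1$ can occur in three blocks'' and finishes by a degree count, whereas you run the forcing chase directly on a hypothetical $5$-block packing until two blocks are forced to share an $X_2$-pair; the combinatorial content is identical.
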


\begin{proof}
Since there exists a $2$-$((4,3,3),(2,1,1),1)$ packing of size 4 (see Example~\ref{ex:433} in Appendix~\ref{section:211appendix}), we have that $D(\vv,\kk,2) \geq 4$ by Proposition~\ref{merging}.

To see that there can be no larger packing, it is enough to show that in any packing of size at least 4, no element of $X_1$ can occur in three blocks.  Let $X_1 = \{1,2,3,4\}$ and $X_2=\{a,b,c,d,e,f\}$.  Suppose that there are three blocks containing element $1 \in X_1$.  Since no pair in $X_1, X_2$ or $X_1 \times X_2$ can be repeated in a block, we can assume without loss of generality that these blocks are $(\{1,2\},\{a,b\})$, $(\{1,3\},\{c,d\})$ and $(\{1,4\},\{e,f\})$.  Now consider the pair $\{2,3\} \in X_1$.  The only possible elements of $X_2$ which can occur in a block with both 2 and 3 are $e$ and $f$, but the pair $\{e,f\}$ has already been used.  So no block can contain $\{2,3\}$.  Similarly, no block can contain $\{2,4\}$ or $\{3,4\}$, so the packing has size 3.
%Show that no element of $X_1$ can occur in three blocks -- this guarantees no packing of size 5.
\end{proof}

\begin{lemma}
Let $\vv=(4,7)$ and $\kk=(2,2)$.  Then $D(\vv,\kk,2) = 4$.
\end{lemma}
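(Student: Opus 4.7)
The plan is to establish matching bounds $4 \le D(\vv,\kk,2) \le 4$. For the lower bound, I would invoke the $2$-$((4,3,4),(2,1,1),1)$ packing of size $4$ provided by Example~\ref{ex:433} (whose array, as noted in the proof of the $(4,3)$ case, also serves with $\vv=(4,3,4)$) and apply Proposition~\ref{merging} with $i=2$, $j=3$; merging these two singleton parts produces a $2$-$((4,7),(2,2),1)$ packing with $4$ blocks.

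For the upper bound, I would proceed by contradiction, assuming a $2$-$(\vv,\kk,1)$ packing $\mathcal{P}$ of size $5$ exists. Since the $X_1$-components must be distinct $2$-subsets of a $4$-set, $\mathcal{P}$ uses $5$ of the $\binom{4}{2}=6$ possible $X_1$-pairs, so exactly one pair is absent. After relabelling $X_1$ I may assume the missing pair is $\{3,4\}$, so elements $1$ and $2$ each occur in three blocks while $3$ and $4$ each occur in two. Write $P_{ij}\subseteq X_2$ for the $X_2$-component of the block whose $X_1$-component is $\{i,j\}$.

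The structural observation driving the argument is that at each vertex $i\in X_1$, the $X_2$-pairs $P_{ij}$ attached to the blocks through $i$ must be pairwise disjoint; otherwise some cross-pair $(i,y)$ would lie in two blocks. In particular $P_{12},P_{13},P_{14}$ partition a $6$-subset of $X_2$, and so do $P_{12},P_{23},P_{24}$. Relabelling $X_2=\{a,b,c,d,e,f,g\}$, I may normalise by setting $P_{12}=\{a,b\}$, $P_{13}=\{c,d\}$, $P_{14}=\{e,f\}$, leaving $g$ as the unique element of $X_2$ not appearing in any block through vertex $1$.

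The contradiction is then squeezed out by tracking $P_{23}$ and $P_{24}$. Disjointness at vertex $3$ together with $P_{23}\neq P_{14}$ forces $P_{23}\in\{\{e,g\},\{f,g\}\}$; symmetrically, disjointness at vertex $4$ together with $P_{24}\neq P_{13}$ gives $P_{24}\in\{\{c,g\},\{d,g\}\}$. Each option contains $g$, contradicting the disjointness of $P_{23}$ and $P_{24}$ required at vertex $2$. The only real obstacle is bookkeeping during the relabellings, but once the role of the ``leftover'' symbol $g$ is isolated, the contradiction is inevitable.
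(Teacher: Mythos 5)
Your proof is correct and follows essentially the same route as the paper: the lower bound comes from merging the two singleton parts of a $2$-$((4,3,4),(2,1,1),1)$ packing of size $4$ via Proposition~\ref{merging}, and the upper bound rests on the same structural analysis around an element of $X_1$ lying in three blocks, whose three disjoint $X_2$-pairs leave a single unused symbol $g$ that then obstructs any two further blocks. The paper dismisses this last step as ``a simple exercise,'' whereas you carry it out explicitly (and neatly, by noting that five blocks force five distinct $X_1$-pairs and hence two elements of degree three); this is a faithful completion rather than a different argument.
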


\begin{proof}
Since there is a $2$-$((4,3,4),(2,1,1),1)$ packing of size 4 (given by the array in Example~\ref{ex:433} with an empty column added), we have that $D(\vv,\kk,2) \geq 4$.

Consider a maximum $2$-$(\vv,\kk,1)$ packing on symbol set $X_1 \cup X_2 = \{1,2,3,4\} \cup \{a,b,c,d,e,f,g\}$.  If there is no element of $X_1$ occurring in three blocks, then the packing can have size at most 4.  Otherwise, we can assume without loss of generality that the packing contains blocks $(\{1,2\},\{a,b\})$, $(\{1,3\},\{c,d\})$, $(\{1,4\},\{e,f\})$.  It is a simple exercise to show that only one further block can be added. 
\end{proof}

Finally, we discuss the case in which $v_1$ and $v_2$ are both odd and $v_1 \leq v_2 < 2v_1$.  In this case, Proposition~\ref{22-bound} tells us that $D(\vv,\kk,2) \leq \lfloor v_2(v_1-1)/4 \rfloor$, while Lemma~\ref{22-howell} proves the existence of a packing of size $\lfloor v_1/2 \rfloor \cdot \lfloor v_2/2 \rfloor$.  Unfortunately, the construction of Lemma~\ref{22-howell} does not always give an optimal packing.  For instance, if $\vv=(5,5)$ and $\kk=(2,2)$, then the packing found by Lemma~\ref{22-howell} has size 4.  However, the following blocks give a maximum packing, of size $5 = v_2(v_1-1)/4$:
$$
\begin{array}{l}
(\{1,2\}, \{a,b\}) \\
(\{1,3\}, \{c,d\}) \\
(\{2,4\}, \{c,e\}) \\
(\{3,5\}, \{a,e\}) \\
(\{4,5\}, \{b,d\}).
\end{array}
$$
%We conjecture that the bound of Proposition~\ref{t=2_bound} will hold in general.  

\begin{problem}
Let $\vv=(v_1,v_2)$, where $v_1$ and $v_2$ are odd and $v_1 \leq v_2 \leq 2v_1-1$.  Determine whether $D(\vv,\kk,2)$ meets the bound of Proposition~\ref{22-bound}.
\end{problem}

We remark that in the situation above, we have that 
%$\left\lfloor v_1/2\right\rfloor  \cdot \left\lfloor v_2/2 \right\rfloor \leq D(\vv,\kk,2) \leq \lfloor v_2(v_1-1)/4 \rfloor$, which is quite a tight bound.
\[ \left\lfloor \frac{v_1}{2} \right\rfloor  \cdot \left\lfloor \frac{v_2}{2} \right\rfloor = \frac{(v_1-1)(v_2-1)}{4} \leq D(\vv,\kk,2) \leq \left\lfloor \frac{v_2(v_1-1)}{4} \right\rfloor, \]
so the upper and lower bounds are quite close.

%\subsubsection{Summary} \label{section:22-summary}

We conclude this section by combining the results for $\kk=(2,2)$ above into the following theorem.

\begin{theorem} \label{thm:2_2_summary}
Suppose $\vv=(v_1,v_2)$ and $\kk=(2,2)$, where $2\leq v_1\leq v_2$.  Then there exists a $2$-$(\vv,\kk,1)$ generalized packing meeting the bound of Proposition~\ref{22-bound}, except for the specific values listed in Table~\ref{table:2_2_exceptions} below, 
%and with the possible exception of the values of $\vv$ listed in Table~\ref{table:2_2_possible} (where the value of $D(\vv,\kk,2)$ remains unknown).
and with possible exception of the case where $v_1$, $v_2$ are both odd and $v_1 \leq v_2 \leq 2v_1-1$.
\end{theorem}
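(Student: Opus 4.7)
The plan is to establish this theorem as a straightforward compilation of the case analyses carried out earlier in the subsection, verifying that the list of exceptions in Table~\ref{table:2_2_exceptions} coincides exactly with the union of exceptional cases identified across those lemmas. First I would partition the parameter region $\{(v_1,v_2) : 2 \leq v_1 \leq v_2\}$ into the two regimes that govern the form of the bound in Proposition~\ref{22-bound}: the ``large gap'' regime $v_2 \geq 2v_1$, where the bound is $\binom{v_1}{2}$, and the ``small gap'' regime $v_1 \leq v_2 < 2v_1$, where the bound takes one of the three forms in Table~\ref{table:22-bound} depending on the parities of $v_1$ and $v_2$.

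For the large-gap regime, Lemma~\ref{binom_22} establishes that the bound $\binom{v_1}{2}$ is met in all cases except possibly $\vv=(4,8)$ and $\vv=(5,10)$. Lemma~\ref{lemma:5_10} shows $D((5,10),\kk,2) = 10 = \binom{5}{2}$, so $(5,10)$ is not in fact an exception, whereas Lemma~\ref{lemma:4_8} yields $D((4,8),\kk,2) = 5 < 6 = \binom{4}{2}$, putting $(4,8)$ in the exception table.

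For the small-gap regime with at least one of $v_1, v_2$ even, the corollary to Lemma~\ref{22-howell} shows the bound $\lfloor v_1/2 \rfloor \cdot \lfloor v_2/2 \rfloor$ is met except for $\vv \in \{(4,4), (4,5), (4,6), (4,7)\}$, whose packing numbers $2, 3, 4, 4$ are determined in the four lemmas immediately following; each falls short of the respective bound ($4, 4, 6, 6$), so all four belong in the exception table. The remaining region, namely $v_1 \leq v_2 < 2v_1$ with both $v_1, v_2$ odd, is precisely what the theorem defers as ``possible exceptions''. Assembling the five exceptional vectors $(4,4)$, $(4,5)$, $(4,6)$, $(4,7)$, $(4,8)$ reproduces Table~\ref{table:2_2_exceptions} and completes the proof.

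The main (and essentially only) obstacle is bookkeeping: one must check that the regions covered by Lemma~\ref{binom_22} and by Lemma~\ref{22-howell} (together with their individually-handled exceptional vectors) together exhaust the non-deferred parameter space without gaps or overlaps, and that the transition case $v_2 = 2v_1$ is correctly assigned to the large-gap regime. No new constructions are required.
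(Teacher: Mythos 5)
Your proposal is correct and matches the paper's (implicit) argument exactly: the paper states Theorem~\ref{thm:2_2_summary} as a compilation of Lemmas~\ref{binom_22}--\ref{22-howell} and the surrounding small-case lemmas, with precisely the partition into $v_2\geq 2v_1$ versus $v_1\leq v_2<2v_1$ and the deferral of the both-odd small-gap case that you describe. Your bookkeeping checks out, including the assignment of $v_2=2v_1$ to the regime where the bound is $\binom{v_1}{2}$ and the verification that $(5,10)$ drops out of the exception list while $(4,4)$, $(4,5)$, $(4,6)$, $(4,7)$, $(4,8)$ remain.
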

\begin{table}[hbt]
\centering
\renewcommand{\arraystretch}{1.2}
\begin{tabular}{|c|c|c|} \hline
$\vv=(v_1,v_2)$  & Projected bound & Packing number \\ \hline
$(4,4)$ & 4 & 2 \\
$(4,5)$ & 4 & 3 \\
$(4,6)$ & 6 & 4 \\
$(4,7)$ & 6 & 4 \\
$(4,8)$ & 6 & 5 \\
\hline
\end{tabular}
\renewcommand{\arraystretch}{1.0}
\caption{Known exceptions for when $D(\vv,(2,2),2)$ does not meet the bound of Proposition~\ref{22-bound}.}
\label{table:2_2_exceptions}
\end{table}
%
%
%\begin{table}[hbt]
%\centering
%\renewcommand{\arraystretch}{1.2}
%\begin{tabular}{|c|c|c|} \hline
%$\vv=(v_1,v_2)$  & Projected bound & Comments \\ \hline
%$v_1$, $v_2$ odd; $v_1 \leq v_2 \leq 2v_1-1$ & $\lfloor v_2(v_1-1)/4 \rfloor$ & $D \geq \left\lfloor v_1/2\right\rfloor  \cdot \left\lfloor v_2/2 \right\rfloor$ \\
%\hline
%\end{tabular}
%\renewcommand{\arraystretch}{1.0}
%\caption{Parameters for which $D = D(\vv,(2,2),2)$ is unknown.}
%\label{table:2_2_possible}
%\end{table}
%

\subsection{$\kk = (1,1,1,1)$} \label{section:1111}

We recall from Section~\ref{section:PA} that if $\kk=(1,1,1,1)$ and $\vv=(s,s,s,s)$, a $2$-$(\vv,\kk,1)$ generalized packing with $N$ blocks is equivalent to a packing array $\mathrm{PA}(N;4,s,2)$.  When $s \notin\{2,6\}$, the existence of two mutually orthogonal Latin squares of order $s$ implies the existence of a maximum generalized packing of size $s^2$.  For other vectors $\vv=(v_1,v_2,v_3,v_4)$ (where we assume, without loss of generality, that $v_1 \leq v_2 \leq v_3 \leq v_4$), we can appeal to Proposition~\ref{max_PA_construction} to obtain a maximum generalized packing, provided there exist a pair of $v_1\times v_2$ orthogonal Latin rectangles.

If $v_2 = 2$ or 6, then there do not exist two $\mathrm{MOLS}(v_2)$.  In particular, this means that there does not exist a $2$-$(\vv,\kk,1)$ packing of size $v_1 v_2$ if $\vv=(2,2,2,2)$ or $(6,6,6,6)$.  However, for certain values of $\vv$ with $v_2 \in \{2,6\}$, we can still obtain a packing of size $v_1 v_2$.  The following lemmas give us the exact values of the packing number in the remaining cases.

\begin{lemma}
Let $\vv=(v_1, v_2, v_3, v_4)$, where $v_2 = 2$ and $v_1 \leq v_2 \leq v_3 \leq v_4$, and let $\kk=(1,1,1,1)$.  Then
$$
D(\vv,\kk,2) = \left\{
\begin{array}{ll}
2, & \mbox{\textrm if } \vv=(1,2,v_3,v_4) \mbox{\textrm\ where } 3 \leq v_3 \leq v_4 \\
2, & \mbox{\textrm if } \vv=(2,2,2,2) \\
3, & \mbox{\textrm if } \vv=(2,2,2,3) \\
4, & \mbox{\textrm if } \vv=(2,2,2,v_4) \mbox{\textrm\ where } v_4 \geq 4 \\
4  & \mbox{\textrm if } \vv=(2,2,v_3,v_4) \mbox{\textrm\ where } 3 \leq v_3 \leq v_4.
\end{array}
\right.
$$
\end{lemma}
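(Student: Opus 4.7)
The plan is to combine the upper bound $D(\vv,\kk,2)\le v_1 v_2$ from Corollary~\ref{schonheim_type} (giving $2$ in the first case and $4$ in the other four) with a sharpened upper-bound analysis in the two cases where the claim lies strictly below $v_1 v_2$, namely $\vv=(2,2,2,2)$ and $\vv=(2,2,2,3)$, and then a short explicit construction in every case. The single observation behind all of the upper-bound work is that, since $\kk=(1,1,1,1)$, any two distinct blocks of a packing can agree in at most one coordinate: two agreements would immediately repeat a pair. Equivalently, the blocks form a code of minimum Hamming distance $3$ on the $4$-tuple alphabet $X_1\cup X_2\cup X_3\cup X_4$.

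For $\vv=(1,2,v_3,v_4)$ every block contains the unique element of $X_1$, so all blocks agree in position~$1$ and hence must have pairwise distinct second coordinates, forcing at most $v_2=2$ blocks; the pair $(\{1\},\{1\},\{1\},\{1\})$, $(\{1\},\{2\},\{2\},\{2\})$ realises this. For $\vv=(2,2,2,2)$ I fix one block as $(\{1\},\{1\},\{1\},\{1\})$; any block at Hamming distance $\ge 3$ from it contains at least three $2$'s, and a direct check over the five such blocks shows that no two of them are themselves at mutual Hamming distance $\ge 3$, so the packing has at most two blocks, realised by the complementary pair $(\{1\},\{1\},\{1\},\{1\}),(\{2\},\{2\},\{2\},\{2\})$.

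The sharpest case is $\vv=(2,2,2,3)$. The first-three-coordinate projection of any packing is an independent set in the $3$-cube (pairs of triples must be at Hamming distance $\ge 2$), and the only independent sets of size $4$ are the two ``parity classes''; working with $\{(1,1,1),(1,2,2),(2,1,2),(2,2,1)\}$ without loss of generality, any two of these triples agree in exactly one of the first three coordinates, so a hypothetical fourth block would force four pairwise-distinct fourth coordinates, contradicting $v_4=3$. Hence the upper bound $3$, achieved by $(\{1\},\{1\},\{1\},\{1\}),(\{1\},\{2\},\{2\},\{2\}),(\{2\},\{1\},\{2\},\{3\})$.

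The remaining two cases follow by construction. For $v_4\ge 4$, the four blocks with first-three-coordinate projections $(1,1,1),(1,2,2),(2,1,2),(2,2,1)$ and distinct fourth coordinates $1,2,3,4$ give a packing of size~$4$. For $v_3\ge 3$ (with $v_4\ge v_3$) the four blocks $(\{1\},\{1\},\{1\},\{1\}),(\{1\},\{2\},\{2\},\{2\}),(\{2\},\{1\},\{2\},\{3\}),(\{2\},\{2\},\{3\},\{1\})$ work, and the six column-pair conditions are straightforward to verify. The only step where any care is required is the classification of $4$-element independent sets in the $3$-cube used for the $(2,2,2,3)$ upper bound; every other step is either a direct application of Corollary~\ref{schonheim_type} or a routine check on a small block set.
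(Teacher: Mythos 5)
Your proposal is correct and follows essentially the same route as the paper: the explicit packings you exhibit are, block for block, the same ones the paper presents as pairs of $2\times 2$ arrays, and the upper bounds come from the same product bound $v_1v_2$. The only difference is that you supply the details the paper waves off as ``straightforward'' or ``not hard to show by contradiction'' (the Hamming-distance-$3$ viewpoint for $(2,2,2,2)$ and the classification of maximum independent sets in the $3$-cube for $(2,2,2,3)$), and these arguments check out.
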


\begin{proof}
If $v_1=1$ or $\vv=(2,2,2,2)$, the result is straightforward to verify.  If $\vv=(2,2,2,3)$, it is not hard to show by contradiction that there is no packing of size 4. However, the following two arrays give a packing of size 3:
$$
\begin{array}{ccc}
\begin{array}{cc} 
1 & 2 \\ 
2 &  \\ 
\end{array}
& 
\mbox{ and}
& 
\begin{array}{cc} 
1 & 2 \\ 
3 &  \\ 
\end{array}
\end{array}\; .
$$ 

Next, suppose that $\vv=(2,2,2,v_4)$, where $v_4\geq 4$.  The arrays
$$
\begin{array}{ccc}
\begin{array}{cc} 
1 & 2 \\ 
2 & 1 \\ 
\end{array}
& 
\mbox{ and }
& 
\begin{array}{cc} 
1 & 2 \\ 
3 & 4 \\ 
\end{array}
\end{array}
$$ 
give a maximum packing.

Finally, if $\vv=(2,2,v_3,v_4)$, where $3 \leq v_3 \leq v_4$, then the following two arrays give a maximum packing:
$$
\begin{array}{ccc}
\begin{array}{cc} 
1 & 2 \\ 
2 & 3 \\ 
\end{array}
& 
\mbox{ and }
& 
\begin{array}{cc} 
1 & 2 \\ 
3 & 1 \\ 
\end{array}
\end{array}\; .
$$ 
\end{proof}

The other exceptions arise as the result of the non-existence of two MOLS of order~6.

\begin{lemma}
Let $\vv=(v_1, v_2, v_3, v_4)$, where $v_2 = 6$ and $v_1 \leq v_2 \leq v_3 \leq v_4$, and let $\kk=(1,1,1,1)$.  Then
$$
D(\vv,\kk,2) = \left\{
\begin{array}{ll}
6v_1 & \mbox{\textrm if } v_1 \leq 5 \\
34, & \mbox{\textrm if } \vv=(6,6,6,6) \\
36, & \textrm{otherwise.}
\end{array}
\right.
$$
\end{lemma}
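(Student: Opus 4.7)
The plan is to bound $D(\vv,\kk,2)$ from above using Proposition~\ref{t=2_bound} and then match the bound by explicit construction in each of the three sub-cases. Since $v_1$ is the smallest of the $v_i$, the bound $\min_{i \neq j} v_i v_j$ simplifies to $v_1 v_2 = 6 v_1$, giving $D(\vv,\kk,2) \leq 6 v_1$ in every case.

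For $v_1 \leq 5$, I would invoke Proposition~\ref{max_PA_construction} using two mutually orthogonal $v_1 \times 6$ Latin rectangles: these exist for $v_1 \leq 3$ by direct construction, for $v_1 = 4$ by Harvey and Winterer~\cite{harvey_winterer}, and for $v_1 = 5$ by known constructions (noting that such MOLR need not extend to MOLS of order~$6$). The resulting packing has size $6 v_1$, matching the bound.

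For $\vv = (6,6,6,6)$, a packing of size $36$ would by Proposition~\ref{prop:packing_array2} correspond to two MOLS of order~$6$, which do not exist, so $D \leq 35$; the sharper bound $D \leq 34$ and a matching PA$(34;4,6,2)$ are classical, and I would cite~\cite[Table III.3.123]{handbook}.

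For the remaining case, $v_1 = 6$ and $\vv \neq (6,6,6,6)$, I would construct a $36$-block packing in two sub-subcases. If $v_3 \geq 7$, take any two MOLS of order~$7$, say $A$ and $B$, and form the blocks $(i,j,A(i,j),B(i,j))$ for $i,j \in \{1,\ldots,6\}$; each of the six pairwise conditions follows directly from the row, column, and orthogonality properties of $A$ and $B$. If $v_3 = 6$ and $v_4 \geq 7$, then counting forces the first three coordinates of any $36$-block packing to form a Latin square $L$ of order~$6$, and the fourth coordinate to be a proper colouring of the Latin square graph $\Gamma(L)$ in at most $v_4$ colours; this requires exhibiting an $L$ with $\chi(\Gamma(L)) \leq 7$, for which the cyclic Cayley square is ruled out (its graph has independence number~$5$, so $\chi \geq 8$), so one must choose a Latin square of order~$6$ with transversals and present an explicit $7$-colouring of $\Gamma(L)$. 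Extensions to larger $v_3$ or $v_4$ then follow by leaving surplus points unused, since the bound $v_1 v_2 = 36$ does not depend on them. This last construction is the main obstacle; the other cases are essentially routine.
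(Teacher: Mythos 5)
Your overall architecture matches the paper's: the upper bound $6v_1$ comes from Proposition~\ref{t=2_bound}, the case $v_1\leq 5$ is settled by a pair of orthogonal $v_1\times 6$ Latin rectangles (the paper gets all of these from a single pair of orthogonal $5\times 6$ rectangles, obtained by completing~\cite[Example III.4.3]{handbook}), and the value $34$ for $\vv=(6,6,6,6)$ is quoted from the known maximum for two mutually orthogonal partial Latin squares of order~$6$ (the paper cites Abdel-Ghaffar~\cite{abdelghaffar}). Your treatment of the sub-case $v_3\geq 7$ via two $\mathrm{MOLS}(7)$ restricted to a $6\times 6$ corner is correct and is a clean observation, though the paper does not need it separately: its single example for $\vv=(6,6,6,7)$ already covers every $\vv$ with $v_1=6$ and $v_4\geq 7$.

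The genuine gap is the sub-case $v_3=6$, $v_4\geq 7$, which is exactly the crux of the lemma. Your reduction there is sound: a $36$-block packing forces the third coordinate to be a Latin square $L$ of order~$6$ and the fourth to be a proper colouring of the Latin square graph $\Gamma(L)$, and you correctly rule out the cyclic square (independence number~$5$ forces $\chi\geq 8$). But you stop at ``one must choose a Latin square of order~$6$ with transversals and present an explicit $7$-colouring,'' without exhibiting one — and there is no soft argument that such an $L$ exists, since $\Gamma(L)$ has $36$ vertices, no order-$6$ Latin square admits more than four pairwise disjoint transversals (five disjoint transversals would leave a sixth, yielding an orthogonal mate), so any $7$-colouring must have an unusual class-size profile such as $6,6,6,6,5,5,2$. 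The existence of such an $L$ and colouring is precisely what the paper establishes by explicit computer-found examples (the Sato--Silva and Britz pairs of arrays for $\vv=(6,6,6,7)$; the Britz pair is literally a Latin square $L$ together with a $7$-colouring of $\Gamma(L)$ in your formulation). Without producing such an example, the value $36$ in the ``otherwise'' row is unproved whenever $v_3=6$, so the proof is incomplete at its most substantive point.
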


\begin{proof}
Proposition~\ref{t=2_bound} tells us that $D(\vv,\kk,2) \leq 6v_1$.  
If $\vv=(6,6,6,6)$, then a $2$-$(\vv,\kk,1)$ design with $N$ blocks is equivalent to a pair of $6\times 6$ mutually orthogonal partial Latin squares with $N$ entries: it is known that the largest possible number of entries is 34 (see Abdel-Ghaffar~\cite{abdelghaffar}).
We next consider the case in which $v_1 \leq 5$.  Although there do not exist any $\mathrm{MOLS}(6)$, there does exist a pair of orthogonal $5 \times 6$ Latin rectangles on 6 symbols (obtained by filling in the $(5,5)$ and $(5,6)$ entries in~\cite[Example III.4.3]{handbook}).

In the remaining cases, we have that $v_4 \geq 7$.  In the Problem Session at the CanaDAM conference in June 2011, the authors posed the problem of determining $D(\vv,\kk,2)$ where $\vv=(6,6,6,7)$ and $\kk=(1,1,1,1)$ as a programming challenge: by the following morning two separate solutions had been provided (the first by C.~Sato and M.~Silva, the second by T.~Britz) which showed that $D(\vv,\kk,2)=36$.

The following two arrays (obtained by Sato and Silva) give a packing of size 36:
$$
\begin{array}{ccc}
\begin{array}{cccccc}
5 & 3 & 4 & 2 & 6 & 1 \\
6 & 4 & 1 & 5 & 2 & 3 \\
2 & 5 & 3 & 1 & 4 & 6 \\
3 & 2 & 5 & 6 & 1 & 4 \\
4 & 1 & 6 & 3 & 5 & 2 \\
1 & 6 & 2 & 4 & 3 & 5 
\end{array}
&
\mbox{ and }
&
\begin{array}{cccccc}
3 & 2 & 5 & 1 & 7 & 4 \\
4 & 7 & 3 & 2 & 5 & 1 \\
7 & 6 & 4 & 5 & 1 & 2 \\
5 & 4 & 1 & 3 & 2 & 6 \\
2 & 1 & 6 & 7 & 4 & 3 \\
6 & 5 & 2 & 4 & 3 & 7
\end{array}
\end{array} \; .
$$
Alternatively, the following solution was found by Britz:
$$
\begin{array}{ccc}
\begin{array}{cccccc}
1 & 2 & 3 & 4 & 5 & 6 \\
2 & 1 & 4 & 3 & 6 & 5 \\
3 & 4 & 5 & 6 & 1 & 2 \\
4 & 5 & 6 & 2 & 3 & 1 \\
5 & 6 & 2 & 1 & 4 & 3 \\
6 & 3 & 1 & 5 & 2 & 4 
\end{array}
&
\mbox{ and }
&
\begin{array}{cccccc}
1 & 2 & 3 & 4 & 5 & 6 \\
3 & 4 & 1 & 5 & 2 & 7 \\
4 & 7 & 2 & 1 & 6 & 5 \\
5 & 1 & 4 & 6 & 7 & 3 \\
6 & 5 & 7 & 2 & 3 & 1 \\
7 & 6 & 5 & 3 & 1 & 2
\end{array}
\end{array} \; .
$$
\end{proof}

\section{Conclusion}
We have seen that, for specific values of the parameters of a $t$-$(\vv,\kk,\lambda)$ generalized packing, examples often correspond to other interesting combinatorial objects.  It seems possible that other classes of combinatorial designs may arise as instances of generalized packings, which warrants further investigation.  

When $t=2$, $\lambda=1$ and $k=3$ or 4, we have determined the generalized packing numbers exactly, except in the case that $\kk=(2,2)$ and the entries of $\vv$ are both odd.  In particular, in the remaining cases, $D(\vv,\kk,2)$ has been shown to meet the upper bound given by Proposition~\ref{t=2_bound} with only a finite number of exceptional values of $\vv$.  We suspect that this may hold more widely, yet demonstrating this will likely be extremely challenging.  As an example of the difficulty of this problem, a special case would be the determination of $D(\vv,\kk,2)$ for $\vv=(10,10,10,10,10)$ and $\kk=(1,1,1,1,1)$; meeting the bound of Proposition~\ref{t=2_bound} would require proving the existence of three $\mathrm{MOLS}$ of order 10.

\subsection*{Acknowledgements}
The authors acknowledge support from a PIMS Postdoctoral Fellowship (R.~F.~Bailey) and an NSERC Postdoctoral Fellowship (A.~C.~Burgess).  We would like to thank Daniel Horsley for communicating the results in~\cite{CHW_trails,colbourn_horsley_wang}, Liz Billington and Nick Cavenagh for communicating~\cite{lindner_rosa}, Christiane Sato, Marcel Silva and Thomas Britz for the examples with $\vv=(6,6,6,7)$ and $\kk=(1,1,1,1)$, and Doug Stinson for the argument in the case $\vv=(6,5,5)$ and $\kk=(2,1,1)$.

\appendix

\section{Exceptional maximum generalized packings for \mbox{$\kk=(3,1)$}} \label{section:31appendix}

In this appendix, we give examples of maximum generalized packings for $\kk=(3,1)$, $t=2$ and $\lambda=1$ which arise as a result of the exceptions in Theorem~\ref{thm:CHW}, for $v_1 = 11$, 12 and 13.

\begin{appexample} \label{example:11pts}
A block colouring of a maximum $2$-$(11,3,1)$ packing with colour type $3^5 1^2$, which yields maximum $2$-$(\vv,\kk,1)$ generalized packings for $\vv=(11,v_2)$, $\kk=(3,1)$ (for all $v_2$):
$$
\begin{array}{l}
\{ \{1,2,3\}, \{4,5,6\}, \{7,8,9\} \}  \\
\{ \{1,4,7\}, \{2,5,8\}, \{3,6,10\} \} \\
\{ \{1,5,9\}, \{2,6,7\}, \{3,8,11\} \} \\
\{ \{1,6,8\}, \{3,4,9\}, \{5,10,11\} \} \\
\{ \{3,5,7\}, \{4,8,10\}, \{6,9,11\} \} \\
\{ \{2,4,11\} \} \\
\{ \{2,9,10\} \}. 
\end{array}
$$
(This example was taken from Colbourn, Horsley and Wang~\cite{colbourn_horsley_wang}.)
\end{appexample}

\begin{appexample} \label{example:20blocks}
A 7-block colouring of a maximum $2$-$(12,3,1)$ packing with colour type $3^6 2^1$, which yields maximum $2$-$(\vv,\kk,1)$ generalized packings for $\vv=(12,v_2)$, $\kk=(3,1)$ where $v_2 \geq 7$:
$$
\begin{array}{l}
\{ \{1, 2, 3\},  \{5, 9, 12\} \} \\
\{ \{1, 4, 5\},  \{2, 11, 12\},  \{3, 9, 10\} \} \\
\{ \{1, 6, 7\},  \{4, 10, 12\},  \{5, 8, 11\} \} \\
\{ \{1, 8, 9\},  \{3, 7, 11\},  \{5, 6, 10\} \}  \\
\{ \{1, 10, 11\},  \{ 2, 4, 6\},  \{7, 8, 12\} \} \\
\{ \{2, 5, 7\},  \{3, 4, 8\},  \{6, 9, 11\} \} \\
\{ \{2, 8, 10\},  \{3, 6, 12\}, \{4, 7, 9\} \}.
\end{array}
$$
(This was found by deleting a point from an 8-block colouring of an $\sts(13)$ with colour type $4^4 3^3 1^1$: cf.\ Example~\ref{example:sts13} below.)
\end{appexample}

\begin{appexample} \label{example:19blocks}
A 5-block colouring of a $2$-$(12,3,1)$ packing with 19 blocks and colour type $3^6 2^1$, which yields maximum $2$-$(\vv,\kk,1)$ generalized packings for $\vv=(12,v_2)$, $\kk=(3,1)$ where $v_2 \leq 6$:
$$
\begin{array}{l}
\{ \{1, 2, 3\}, \{4, 5, 6\}, \{7, 8, 9\} \} \\
\{ \{1, 4, 7\}, \{2, 6, 10\}, \{3, 8, 11\}, \{5, 9, 12\} \} \\
\{ \{1, 5, 8\}, \{2, 9, 11\}, \{3, 6, 7\}, \{4, 10, 12\} \} \\
\{ \{1, 6, 9\}, \{2, 4, 8\}, \{3, 5, 10\}, \{7, 11, 12\} \} \\
\{ \{1, 10, 11\}, \{2, 5, 7\}, \{3, 4, 9\}, \{6, 8, 12\} \}.
\end{array}
$$
(This was found by manipulating a 5-block colouring of 20 blocks on 13 points, obtained by the authors in a computer search.)
\end{appexample}

\begin{appexample} \label{example:sts13}
An 8-block colouring of Steiner triple system $\sts(13)$ with colour type $4^4 3^3 1^1$, which yields maximum $2$-$(\vv,\kk,1)$ generalized packings for $\vv=(13,v_2)$, $\kk=(3,1)$ where $v_2 \geq 7$:
$$
\begin{array}{l}
\{ \{1, 2, 3\},  \{4, 11, 12\},  \{5, 9, 13\} \} \\
\{ \{1, 4, 5\},  \{2, 11, 13\},  \{3, 9, 10\},  \{ 6, 8, 12\} \} \\
\{ \{1, 6, 7\},  \{2, 9, 12\},  \{4, 10, 13\},  \{5, 8, 11\} \} \\
\{ \{1, 8, 9\},  \{3, 7, 11\},  \{5, 6, 10\} \}  \\
\{ \{1, 10, 11\},  \{ 2, 4, 6\},  \{3, 5, 12\},  \{7, 8, 13\} \} \\
\{ \{1, 12, 13\},  \{2, 5, 7\},  \{3, 4, 8\},  \{6, 9, 11\} \} \\
\{ \{2, 8, 10\},  \{3, 6, 13\}, \{4, 7, 9\} \} \\
\{ \{7, 10, 12\} \}.
\end{array}
$$
(This example was found by manipulating $\sts(13)$ number 2 in Mathon, Phelps and Rosa~\cite{mathon_phelps_rosa}.)
\end{appexample}

\begin{appexample} \label{example:13pts}
A 6-block colouring of a $2$-$(13,3,1)$ packing with 24 blocks and colour type $4^6$, which yields maximum $2$-$(\vv,\kk,1)$ generalized packings for $\vv=(13,v_2)$, $\kk=(3,1)$ where $v_2 \leq 6$:
$$
\begin{array}{l}
\{ \{1, 2, 3\}, \{4, 5, 6\}, \{7, 8, 9\}, \{10, 11, 12\} \} \\
\{ \{1, 4, 7\}, \{2, 5, 8\}, \{3, 6, 10\}, \{9, 11, 13\} \} \\
\{ \{1, 5, 9\}, \{2, 10, 13\}, \{3, 7, 11\}, \{6, 8, 12\} \} \\
\{ \{1, 6, 11\}, \{2, 7, 12\}, \{3, 5, 13\}, \{4, 9, 10\} \} \\
\{ \{1, 8, 10\}, \{2, 4, 11\}, \{3, 9, 12\}, \{6, 7, 13\} \} \\
\{ \{1, 12, 13\}, \{2, 6, 9\}, \{3, 4, 8\}, \{5, 7, 10\} \}.
\end{array}
$$
(This example was also taken from Colbourn, Horsley and Wang~\cite{colbourn_horsley_wang}.)
\end{appexample}

\section{Exceptional maximum generalized packings for \mbox{$\kk=(2,1,1)$}} \label{section:211appendix}

In this appendix, we compile a list of generalized packings with $\kk=(2,1,1)$, $t=2$ and $\lambda=1$ in certain small cases.  In particular, these packings arise where the Howell design which we would otherwise use to construct a maximum packing does not exist.

\begin{appexample} \label{ex:433}

{A packing of size 4 where $\vv=(4,3,3)$:}
$$
\begin{array}{|c|c|c|} \hline
1,2 & 3,4 &     \\ \hline
    &     & 1,3 \\ \hline
    &     & 2,4 \\ \hline
\end{array}
$$
\end{appexample}

\begin{appexample} \label{ex:435}

{A packing of size 5 where $\vv=(4,3,5)$:}

$$
\begin{array}{|c|c|c|c|c|} \hline
1,2 & 3,4 &     &     &     \\ \hline
    &     & 1,3 & 2,4 &     \\ \hline
    &     &     &     & 1,4 \\ \hline
\end{array}
$$
\end{appexample}

\begin{appexample} \label{ex:436}

{A packing of size 6 where $\vv=(4,3,6)$:}
$$
\begin{array}{|c|c|c|c|c|c|} \hline
1,2 & 3,4 &     &     &     &     \\ \hline
    &     & 1,3 & 2,4 &     &     \\ \hline
    &     &     &     & 1,4 & 2,3 \\ \hline
\end{array}
$$
\end{appexample}

\begin{appexample} \label{ex:444}

{A packing of size 5 where $\vv=(4,4,4)$:}
$$
\begin{array}{|c|c|c|c|} \hline
1,2 &     &     & 3,4 \\ \hline
    & 1,3 &     &     \\ \hline
    &     & 1,4 &     \\ \hline
    & 2,4 &     &     \\ \hline
\end{array} 
$$
\end{appexample}

\begin{appexample} \label{ex:445}

{A packing of size 6 where $\vv=(4,4,5)$:}
$$
\begin{array}{|c|c|c|c|c|} \hline
1,2 & 3,4 &     &     &     \\ \hline
    &     & 1,3 & 2,4 &     \\ \hline
    &     &     &     & 1,4 \\ \hline
    &     &     &     & 2,3 \\ \hline
\end{array}
$$
\end{appexample}

\begin{appexample} \label{ex:555}

A packing of size 9 where $\vv=(5,5,5)$:
$$
\begin{array}{|c|c|c|c|c|} \hline
1,2 & 3,4 &     &     &  \\ \hline
3,5 &     & 1,4 &     &  \\ \hline
    & 2,5 &     & 1,3 &  \\ \hline
    &     & 2,3 &     & 1,5 \\ \hline
    &     &     & 4,5 &  \\ \hline
\end{array}
$$
\end{appexample}

\begin{appexample} \label{ex:556}

A packing of size 10 where $\vv=(5,5,6)$:
$$
\begin{array}{|c|c|c|c|c|c|} \hline
1,2 & 3,4 &     &  &  &  \\ \hline
3,5 &     & 1,4 &  &  &  \\ \hline
    & 2,5 &  & 1,3 &  &  \\ \hline
&  & 2,3 &  & 1,5 &  \\ \hline
&  &  & 4,5 &  & 2,4 \\ \hline
\end{array}
$$
\end{appexample}

\begin{appexample} \label{ex:622}

A packing of size 4 where $\vv=(6,2,2)$:
$$
\begin{array}{|c|c|} \hline
1,2 & 3,4 \\ \hline
3,5 & 1,6 \\ \hline
\end{array}
$$
\end{appexample}

\begin{appexample} \label{ex:655}

A packing of size 13 where $\vv=(6,5,5)$:
$$
\begin{array}{|c|c|c|c|c|} \hline
1,2 & 3,4 & 5,6 &     &     \\ \hline
3,5 & 1,6 & 2,4 &     &     \\ \hline
4,6 & 2,5 & 1,3 &     &     \\ \hline
    &     &     & 1,4 & 2,3 \\ \hline
    &     &     & 2,6 & 1,5 \\ \hline
\end{array}
$$
\end{appexample}

\begin{appexample} \label{ex:656}

A packing of size 15 where $\vv=(6,5,6)$:
$$
\begin{array}{|c|c|c|c|c|c|} \hline
1,2 & 3,4 & 5,6 &     &     &     \\ \hline
3,5 & 1,6 & 2,4 &     &     &     \\ \hline
4,6 &     &     & 1,5 & 2,3 &     \\ \hline
    & 2,5 &     & 3,6 &     & 1,4 \\ \hline
    &     & 1,3 &     & 4,5 & 2,6 \\ \hline
\end{array}
$$
\end{appexample}

\begin{appexample} \label{ex:666}

{A packing of size 15 where $\vv=(6,6,6)$:}

$$
\begin{array}{|c|c|c|c|c|c|} \hline
1,2 & 3,4 & 5,6 &     &     &     \\ \hline
3,5 & 1,6 & 2,4 &     &     &     \\ \hline
4,6 &     &     & 1,5 & 2,3 &     \\ \hline
    & 2,5 &     & 3,6 &     & 1,4 \\ \hline
    &     & 1,3 &     & 4,5 & 2,6 \\ \hline
    &     &     &     &     &     \\ \hline
\end{array}
$$
\end{appexample}

\begin{appexample} \label{ex:755}

A packing of size 15 where $\vv=(7,5,5)$:
$$
\begin{array}{|c|c|c|c|c|} \hline
1,2 & 3,4 & 5,6 & & \\ \hline
3,5 & 1,6 & 2,4 & & \\ \hline
4,6 & & & 1,3 & 2,5 \\ \hline
& 2,7 & & 4,5 & 3,6 \\ \hline
& & 3,7 & 2,6 & 1,4 \\ \hline
\end{array}
$$
\end{appexample}

\begin{appexample} \label{ex:855}

A packing of size 19 where $\vv=(8,5,5)$:
$$
\begin{array}{|c|c|c|c|c|}\hline
1,2 & 3,4 & 5,6 & 7,8 &     \\ \hline
3,5 & 1,6 & 2,7 &     & 4,8 \\ \hline
4,7 & 2,8 &     & 1,5 & 3,6 \\ \hline
6,8 & 5,7 & 1,4 & 2,3 &     \\ \hline
    &     & 3,8 & 4,6 & 1,7 \\ \hline
\end{array}
$$
\end{appexample}

\begin{appexample} \label{ex:856}

A packing of size 20 where $\vv=(8,5,6)$:
$$
\begin{array}{|c|c|c|c|c|c|}\hline
1,2 & 3,4 & 5,6 & 7,8 &     &     \\ \hline
3,5 & 1,6 & 2,7 &     & 4,8 &     \\ \hline
4,6 & 2,8 &     & 1,5 &     & 3,7 \\ \hline
    & 5,7 & 1,4 &     & 2,3 & 6,8 \\ \hline
    &     & 3,8 & 2,6 & 1,7 & 4,5 \\ \hline
\end{array}
$$
\end{appexample}

\end{document}